\theoremstyle{plain}
\newtheorem{theorem}{Theorem}[section]
\newtheorem*{MainGeneralized}{Theorem \ref{T:generalized}}
\newtheorem*{ScottSwarupGeneralized}{Theorem \ref{T:General.Scott.Swarup}}
\newtheorem{lemma}[theorem]{Lemma}
\newtheorem{corollary}[theorem]{Corollary}
\newtheorem{proposition}[theorem]{Proposition}
\newtheorem*{claim}{Claim}
\theoremstyle{definition}
\newtheorem*{remark}{Remark}
\newtheorem{question}{Question}
\newcommand{\define}[1]{\textbf{#1}}
\newcommand{\T}{\mathrm{Teich}}
\newcommand{\Mod}{\mathrm{Mod}}
\newcommand{\PSL}{\mathrm{PSL}}
\newcommand{\Hull}{\mathrm{Hull}}
\newcommand{\Aut}{\mathrm{Aut}}
\newcommand{\Out}{\mathrm{Out}}
\newcommand{\HH}{\mathbb H}
\newcommand{\RR}{\mathbb R}
\newcommand{\ZZ}{\mathbb Z}
\newcommand{\C}{\mathcal C}
\newcommand{\diam}{\mathrm{diam}}
\newcommand{\Isom}{\mathrm{Isom}}
\newcommand{\co}{\colon\thinspace}
\begin{document}

\title{\textbf{\Large Pseudo-Anosov subgroups of fibered 3--manifold groups}}
\author{Spencer Dowdall, Richard P. Kent IV, and Christopher J. Leininger\thanks{The first author was supported by NSF MSPRF No. 1204814, the  second by NSF grant DMS-1104871,  and the third  by NSF grant DMS-0905748.
All three authors were partially supported by the GEAR network.
}}
\date{April 5, 2013}

\maketitle

\begin{abstract}
Let $S$ be a hyperbolic surface and let $\mathring{S}$ be the surface obtained  from $S$ by removing a point.
The mapping class groups $\Mod(S)$ and  $\Mod(\mathring{S})$ fit into a short exact sequence
	$
		1 \to \pi_1(S) \to \Mod(\mathring{S}) \to \Mod(S) \to 1.
	$
If $M$ is a hyperbolic $3$--manifold that fibers over the circle with fiber $S$, then its fundamental group fits into a short exact sequence
	$
		1 \to \pi_1(S) \to \pi_1(M) \to \ZZ \to 1
	$
that injects into the one above.

We show that, when viewed as subgroups of $\Mod(\mathring{S})$, finitely generated purely pseudo-Anosov subgroups of $\pi_1(M)$ are convex cocompact in the sense of Farb and Mosher.   
More generally, if we have a $\delta$--hyperbolic surface group extension 
	$
		1 \to \pi_1(S) \to \Gamma_\Theta \to \Theta \to 1,
	$
any quasiisometrically embedded purely pseudo-Anosov subgroup of $\Gamma_\Theta$ is convex cocompact in $\Mod(\mathring{S})$.
We also obtain a generalization of a theorem of Scott and Swarup by showing that finitely generated subgroups of $\pi_1(S)$ are quasiisometrically embedded in hyperbolic extensions $\Gamma_\Theta$.
\end{abstract}

\section{Introduction}

In \cite{FMcc}, Farb and Mosher defined a notion of convex cocompactness for subgroups $\Theta < \Mod(S)$ of the mapping class group of a closed hyperbolic surface $S$ by analogy with convex cocompactness in the theory of Kleinian groups.   
This analogy was extended by the second and third authors \cite{kentleiningershadows,kentleiningeruniform}.   
Combining the results of Farb--Mosher \cite{FMcc} and Hamenst\"adt \cite{hamenstadt}, it follows that  the associated $\pi_1(S)$--extension $\Gamma_\Theta$ of $\Theta < \Mod(S)$ given by
	\[
		\begin{tikzpicture}[>= to, line width = .075em, 
			baseline=(current bounding box.center)]
		\matrix (m) [matrix of math nodes, column sep=1.5em, row sep = 1em, 		text height=1.5ex, text depth=0.25ex]
		{
			1 & \pi_1(S)  & \Gamma_\Theta & \Theta & 1 \\
		};
		\path[->,font=\scriptsize]
		(m-1-1) edge (m-1-2)
		(m-1-2) edge (m-1-3)
		(m-1-3) edge (m-1-4)
		(m-1-4) edge (m-1-5)
		;
		\end{tikzpicture}
	\]
is $\delta$--hyperbolic for some $\delta$ if and only if $\Theta$ is convex cocompact.  
For punctured surfaces, one has a similar statement for associated orbifold extensions \cite{FMcc}, or one can replace hyperbolicity with relative hyperbolicity \cite{Mj.Sardar.2012}; see Section \ref{sec:convexCocompactness}.

If $\Theta < \Mod(S)$ is convex cocompact, then it must be finitely generated and \define{purely pseudo-Anosov}, meaning that every infinite order element is pseudo-Anosov.  
Conversely, if $\Theta$ is purely pseudo-Anosov then the (orbifold) extension $\Gamma_\Theta$ has no Baumslag--Solitar subgroups.  
As such subgroups are the natural obstructions to being hyperbolic, Farb and Mosher \cite{FMcc} asked
\begin{question} \label{Q:FMgromov}
If $\Theta < \Mod(S)$ is a purely pseudo-Anosov, finitely generated free group, is $\Theta$ convex cocompact?
\end{question}
The hypotheses imply that $\Gamma_\Theta$ has a $3$--dimensional $K(\Gamma_\Theta,1)$, and so this is a special case of a question of Gromov, see \cite{kentleiningerabc}.  
More generally one can ask if every finitely generated purely pseudo-Anosov subgroup is convex cocompact.  
These questions seem difficult in general as the class of purely pseudo-Anosov subgroups is somewhat mysterious.  

We attack Question \ref{Q:FMgromov} for certain classes of purely pseudo-Anosov subgroups related to the Kleinian origins of convex cocompactness. 
Recall that if $M = \HH^3/\Gamma$ is an orientable finite volume hyperbolic $3$--manifold that fibers over $S^1$ with fiber $S$, and $\mathring{S}$ denotes the surface equipped with a distinguished basepoint, then there is a natural injection $\Gamma \to \Mod(\mathring{S})$; see sections \ref{sec:hyperbolicGeometry} and \ref{sec:groupExtensions}. 
We may then view any subgroup $G < \Gamma$ as a subgroup of $\Mod(\mathring{S})$.
\begin{theorem} \label{T:main}
Let $\Gamma$ be the fundamental group of hyperbolic $3$--manifold that fibers over the circle with fiber $S$, considered as a subgroup of $\Mod(\mathring{S})$.
If $G$ is a finitely generated purely pseudo-Anosov subgroup of $\Gamma$, then it is convex cocompact.
\end{theorem}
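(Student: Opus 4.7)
The plan is to reduce Theorem~\ref{T:main} to the Main Generalized Theorem~\ref{T:generalized}. Since $M$ is hyperbolic and fibered, the fundamental group $\Gamma = \pi_1(M)$ is itself a ($\delta$--hyperbolic, or relatively hyperbolic if $M$ has cusps) surface group extension
$1 \to \pi_1(S) \to \Gamma \to \ZZ \to 1$. By Theorem~\ref{T:generalized} (or its relatively hyperbolic variant in the spirit of \cite{Mj.Sardar.2012}), if we can show that the given finitely generated purely pseudo-Anosov subgroup $G < \Gamma$ is \emph{quasiisometrically embedded} in $\Gamma$, then $G$ is automatically convex cocompact in $\Mod(\mathring{S})$. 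So the entire task is this qi-embedding statement.

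To establish the qi-embedding I would analyze the covering 3--manifold $M_G := \HH^3/G$ using Kleinian methods. By the Tameness Theorem of Agol and Calegari--Gabai, $M_G$ is topologically tame, so its ends are classifiable. The proof splits into two cases. If every non-cusp end of $M_G$ is geometrically finite, then $G$ is geometrically finite in $\Gamma$, hence quasiconvex and qi-embedded, and we are done. Otherwise some end of $M_G$ is geometrically infinite, and by the Thurston--Canary Covering Theorem, $G$ virtually coincides with the fiber subgroup $\pi_1(\widetilde F)$ of a fibration $\widetilde M \to S^1$ of some finite cover $\widetilde M \to M$. I would then derive a contradiction with the purely pseudo-Anosov hypothesis by exhibiting in $G \cap \pi_1(S)$ a nontrivial element whose corresponding free homotopy class in $S$ does not fill; by Kra's Theorem such an element acts reducibly on $\mathring S$, contradicting purely pseudo-Anosovness of $G$.

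The principal obstacle is producing the non-filling loop in the geometrically infinite case. When the new fibration $\widetilde M \to S^1$ is commensurable with the pullback of $M \to S^1$, the virtual fiber $\widetilde F$ is essentially a finite cover of $S$; simple closed curves on $\widetilde F$ then project to non-filling loops in $\pi_1(S)$, giving the required contradiction directly. In the non-commensurable case the two fibrations correspond to distinct classes in $H^1(\widetilde M;\ZZ)$, and one must compare them carefully to extract a non-filling element from $\pi_1(\widetilde F) \cap \pi_1(S)$, which is the kernel of the restriction of the original fibration class to $\pi_1(\widetilde F)$. Translating the topological conclusion of the Thurston--Canary Covering Theorem into a reducibility statement in $\Mod(\mathring{S})$ via Kra's Theorem is the conceptual bridge the proof must cross.
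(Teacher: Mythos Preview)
Your strategy---establish that $G$ is quasiisometrically embedded in $\Gamma$ via Kleinian methods, then invoke Theorem~\ref{T:generalized}---is sound in spirit, and your first step is essentially the paper's Lemma~\ref{L:cccKleinian}. However, there are two issues.

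First, a genuine gap: Theorem~\ref{T:generalized} is stated and proven only for \emph{closed} $S$, whereas Theorem~\ref{T:main} covers fibered hyperbolic $3$--manifolds with cusps (so $S$ may be punctured). You gesture at a ``relatively hyperbolic variant in the spirit of \cite{Mj.Sardar.2012}'', but no such variant of Theorem~\ref{T:generalized} is established here, and proving one would require nontrivial additional work. The paper instead proves Theorem~\ref{T:main} directly using the hyperbolic geometry of $\HH^3$ (Propositions~\ref{P:qi subsurfaces} and~\ref{P:bounded diameter 1}), which handles the cusped case uniformly. So your reduction, as written, only covers the closed case.

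Second, your ``principal obstacle'' in the virtual-fiber case is an artifact of your approach. You propose to compare the new fibration $\widetilde M \to S^1$ with the original one and worry about the non-commensurable situation. The paper's argument (Lemma~\ref{L:cccKleinian}) avoids this entirely: since both $G$ and $\pi_1(S)\cap\widetilde\Gamma$ are normal in $\widetilde\Gamma$ with quotient $\ZZ$, the commutator subgroup $[\widetilde\Gamma,\widetilde\Gamma]$ lies in $G\cap\pi_1(S)$. Now take any strict essential subsurface $\Sigma\subset S$, any component $\Sigma_0$ of its preimage in the finite cover $\widetilde S$ corresponding to $\pi_1(S)\cap\widetilde\Gamma$, and any nontrivial commutator in $\pi_1(\Sigma_0)$. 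This element lies in $G$ but is carried by $\Sigma$, hence is non-filling in $S$, hence not pseudo-Anosov by Theorem~\ref{T:kra}. No case analysis on the two fibrations is needed. You should also explicitly rule out parabolics in $G$ (as the paper does): a parabolic in $\Gamma$ lies in a $\ZZ^2$ subgroup, but the centralizer of a pseudo-Anosov is virtually cyclic.
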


This is a generalization of the second and third authors' work with Schleimer \cite{kentleiningerschleimer}, where the subgroup $G$ was contained in the fiber group $\pi_1(S) < \pi_1(M) \cong \Gamma$.  
In that case, the group $G$ could be naturally identified with a Fuchsian group, and $2$--dimensional hyperbolic geometry could be used to provide the additional leverage needed to prove convex cocompactness.
In Theorem \ref{T:main}, $G$ is naturally a Kleinian group, and $3$--dimensional geometric techniques can be applied in a similar way to prove convex cocompactness, though there are a number of technical obstacles in the generalization.

The ideas used to deal with these obstacles apply in a more general setting.  
Specifically, given $\Theta < \Mod(S)$, the extension $\Gamma_\Theta$ also naturally injects into $\Mod(\mathring{S})$.
\begin{theorem}\label{T:generalized}  Suppose $S$ is a closed surface, $\Theta < \Mod(S)$ is a convex cocompact subgroup and $G< \Gamma_\Theta$ is a finitely generated quasiisometrically embedded subgroup.  
If $G$ is purely pseudo-Anosov as a subgroup of $\Mod(\mathring{S})$, then it is convex cocompact.
\end{theorem}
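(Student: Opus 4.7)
The plan is to establish convex cocompactness of $G$ in $\Mod(\mathring{S})$ by verifying that the orbit map $G \to \C(\mathring{S})$ is a quasi-isometric embedding; this is the Kent-Leininger criterion for convex cocompactness (in the orbifold/punctured setting mentioned in the introduction). Note immediately that $G$ is word hyperbolic, being a finitely generated QI embedded subgroup of the $\delta$-hyperbolic group $\Gamma_\Theta$. The argument then treats separately the normal fiber subgroup $K := G \cap \pi_1(S) \triangleleft G$, the base $Q := G/K$ (which injects into $\Theta$), and their interaction.

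For the fiber, every non-trivial $k \in K$ acts on $\mathring{S}$ as a point-push along a loop in $S$, and by Kra's theorem such a point-push is pseudo-Anosov in $\Mod(\mathring{S})$ precisely when the loop fills $S$. The purely pseudo-Anosov hypothesis thus forces every non-trivial element of $K$ to represent a filling loop. Once $K$ is shown to be finitely generated (via normality in the hyperbolic group $G$ combined with the fact that $K < \pi_1(S)$, a point that would need to be treated carefully), the generalized Scott-Swarup result (Theorem \ref{T:General.Scott.Swarup}) gives that $K$ is QI embedded in $\Gamma_\Theta$, and hence in $G$. This puts $K$ exactly in the setting of Kent, Leininger, and Schleimer \cite{kentleiningerschleimer}, yielding that $K$ is convex cocompact in $\Mod(\mathring{S})$ and in particular that the orbit map $K \to \C(\mathring{S})$ is a QI embedding.

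For the base, the hyperbolicity of $\Gamma_\Theta$ together with the QI embedding of $G$ should give that $Q$ is QI embedded in $\Theta$. Since $\Theta$ is convex cocompact, so is $Q$, and the orbit map $Q \to \C(S)$ is a QI embedding. Pulling this back along the Lipschitz forgetful projection $\C(\mathring{S}) \to \C(S)$ controls the component of the $\C(\mathring{S})$-displacement contributed by the $Q$-direction.

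The main obstacle, where I expect the bulk of the technical work to lie, is patching the fiber and base contributions to conclude that the full orbit map $G \to \C(\mathring{S})$ is a QI embedding. Decomposing $g \in G$ as $g = k \tilde q$ with $k \in K$ and $\tilde q$ a lift of its image in $Q$, the motions of $k$ and $\tilde q$ could a priori partially cancel one another in $\C(\mathring{S})$. Ruling out this cancellation requires a Masur-Minsky style subsurface-projection and bounded-geodesic-image analysis, coupled with the hyperbolic geometry of $\Gamma_\Theta$ (for instance via a Cannon-Thurston-type map into $\EL(\mathring{S})$, or via a direct quasi-convex-hull argument using the $\delta$-hyperbolicity of $\Gamma_\Theta$) in order to show that $d_{\C(\mathring{S})}(\alpha, g\alpha)$ grows linearly in $|g|_G$.
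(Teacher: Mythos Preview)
Your approach contains a genuine gap, and it is not the combination step you flagged as the hard part---it is the finite generation of $K = G \cap \pi_1(S)$. In fact the situation is worse than needing care: in the generic case $K$ is \emph{not} finitely generated, and your argument as stated leads to a contradiction. Suppose $K$ is nontrivial and $Q = G/K$ is infinite. If $K$ were finitely generated, then Theorem~\ref{T:General.Scott.Swarup} would give $K$ quasiisometrically embedded in $\Gamma_\Theta$, and since $G$ is quasiisometrically embedded in $\Gamma_\Theta$, restriction shows $K$ is quasiisometrically embedded (hence quasiconvex) in the hyperbolic group $G$. But a quasiconvex normal subgroup of a hyperbolic group is either finite or of finite index, contradicting our assumptions. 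So outside the degenerate cases $K = 1$ or $|Q| < \infty$, the subgroup $K$ is infinitely generated, and neither Theorem~\ref{T:General.Scott.Swarup} nor \cite{kentleiningerschleimer} applies to it. (Incidentally, Theorem~\ref{T:General.Scott.Swarup} is derived in the paper from the same technical proposition that drives the proof of Theorem~\ref{T:generalized}, so invoking it here would not shortcut the work anyway.)

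The paper's proof avoids this decomposition entirely. Rather than splitting $G$ into fiber and base pieces, it works directly with a geometric model $\widetilde X$ for $\Gamma_\Theta$ (an $\HH^2$--bundle over the Cayley graph of $\Theta$) and exploits the quasiconvexity of the orbit $G\cdot x$ in $\widetilde X$. The key technical input (Proposition~\ref{P:qi subsurfaces general}) is that for every simplex $u \subset \C(\mathring{S})$ the $2$--dimensional hull $\Hull_{t_u}(u)$ in a suitable fiber is uniformly quasiisometrically embedded in $\widetilde X$. Given a geodesic $u_0,\dots,u_n$ in $\C(\mathring{S})$ from $u$ to $g\cdot u$, one builds a broken path in $\widetilde X$ through these hulls and bounds the diameter of its closest--point projection to $G\cdot x$ piece by piece. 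The bound on each piece (Lemma~\ref{L:bounded intersection general}) is obtained not by analyzing $K$ itself, but by using compactness of fibers in $X_G$ to reduce to \emph{finitely generated} subgroups $G_i < K$ arising as fundamental groups of compact subsurfaces---and it is to these $G_i$, not to $K$, that the result of \cite{kentleiningerschleimer} is applied.
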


In \cite{Scott.Swarup.1990}, Scott and Swarup prove that if $\Gamma$ is the fundamental group of a hyperbolic $3$--manifold fibered over the circle with fiber $S$, then any finitely generated infinite--index  subgroup of the fiber subgroup $\pi_1(S) < \Gamma$ is convex cocompact.
A consequence of our work is the following generalization of this to arbitrary hyperbolic extensions $\Gamma_\Theta$. 
See \cite{Mitra.1999} for a discussion of an analog for hyperbolic free--by--cyclic groups.

\begin{theorem}\label{T:General.Scott.Swarup}
Let 
	\[
		\begin{tikzpicture}[>= to, line width = .075em, 
			baseline=(current bounding box.center)]
		\matrix (m) [matrix of math nodes, column sep=1.5em, row sep = 1em, 		text height=1.5ex, text depth=0.25ex]
		{
			1 & \pi_1(S)  & \Gamma_\Theta & \Theta & 1 \\
		};
		\path[->,font=\scriptsize]
		(m-1-1) edge (m-1-2)
		(m-1-2) edge (m-1-3)
		(m-1-3) edge (m-1-4)
		(m-1-4) edge (m-1-5)
		;
		\end{tikzpicture}
	\]
be a $\delta$--hyperbolic surface group extension.
If $H$ is a finitely generated infinite--index subgroup of $\pi_1(S)$, then $H$ is quasiisometrically embedded in $\Gamma_\Theta$.
\end{theorem}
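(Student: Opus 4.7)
The strategy is to invoke the Cannon--Thurston map of the hyperbolic extension $\Gamma_\Theta$ to reduce the theorem to a two-dimensional statement---that no leaf of any of the associated ending laminations on $S$ has both endpoints in the limit set of a finitely generated infinite-index subgroup of $\pi_1(S)$---which we then handle by a finite-cover argument using subgroup separability of surface groups.

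Since $\Gamma_\Theta$ is $\delta$-hyperbolic and $\pi_1(S)$ is a hyperbolic normal subgroup, Mitra's theorem gives a continuous Cannon--Thurston map $\hat{\iota}\co \partial\pi_1(S) \to \partial\Gamma_\Theta$ extending the inclusion, and a standard companion result says $H$ is quasiisometrically embedded in $\Gamma_\Theta$ if and only if $\hat\iota$ is injective on the limit set $\Lambda(H) = \partial H \subset \partial\pi_1(S)$. Because $H$ is finitely generated and has infinite index in the closed surface group $\pi_1(S)$, classical Fuchsian theory makes $H$ convex cocompact in $\PSL(2,\RR)$: $\Lambda(H)$ is a proper closed subset of $\partial\HH^2 \cong \partial\pi_1(S)$, and the convex core of $\HH^2/H$ is a compact hyperbolic surface with geodesic boundary. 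Mitra's analysis also describes the fibers of $\hat\iota$: there is a family $\EL_\Theta$ of filling minimal laminations on $S$ (parametrized by $\partial\Theta$) such that $\hat\iota(p) = \hat\iota(q)$ for distinct $p,q$ exactly when $\{p,q\}$ are endpoints of a common leaf, or distinct ideal vertices of a complementary polygon, of some $\Lambda \in \EL_\Theta$. So it suffices to rule out both possibilities when $p,q \in \Lambda(H)$.

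For the leaf case, suppose $\ell \subset \HH^2$ is a leaf of $\tilde\Lambda$ with both endpoints in $\Lambda(H)$. By Scott's subgroup separability for surface groups, we pass to a finite cover $\tilde S \to S$ in which $H$ (possibly after enlargement to a finite-index overgroup with the same limit set) is the fundamental group of an embedded essential proper subsurface $\Sigma \subsetneq \tilde S$ with geodesic boundary. The leaf $\ell$ lies in the convex hull of $\Lambda(\pi_1(\Sigma))$, so it projects to the isometric copy of $\Sigma$ inside the intermediate cover $\HH^2/\pi_1(\Sigma)$, and hence further to a leaf of the lift $\widetilde\Lambda$ of $\Lambda$ contained entirely in $\Sigma$. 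After a further finite cover if necessary to ensure $\widetilde\Lambda$ is minimal on $\tilde S$, minimality forces $\widetilde\Lambda \subset \Sigma$; but $\tilde S \setminus \Sigma$ contains an essential simple closed curve $\alpha$ (since $\Sigma$ is a proper essential subsurface), which is disjoint from $\widetilde\Lambda \subset \Sigma$, contradicting that $\widetilde\Lambda$ fills $\tilde S$. The complementary-polygon case reduces to the leaf case by inspecting the bounding leaves of $P$ emanating from $p$ and $q$: if any such leaf has both endpoints in $\Lambda(H)$, apply the leaf case; otherwise combinatorial bookkeeping on the finitely many ideal vertices of $P$ together with $H$-invariance of $\Lambda(H)$ yields a contradiction directly.

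The main obstacle is the leaf-case argument, and within it the step of promoting ``a single leaf of $\widetilde\Lambda$ lies in $\Sigma$'' to ``all of $\widetilde\Lambda$ lies in $\Sigma$'': the lift of a minimal filling lamination to a finite cover need not itself be minimal, so one must arrange the cover carefully (or use the finite deck action) to propagate the containment. Once that is in place, the final essential-curve-in-complement step is classical.
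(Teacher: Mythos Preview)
Your route differs substantially from the paper's. The paper derives the theorem in a few lines from Proposition~\ref{P:qi subsurfaces general}: after using Scott separability (just as you do) to pass to a finite cover $S''$ to which $\Theta$ lifts and in which a finite-index $H' \leq H$ is a subsurface group, that proposition---which says the fiberwise convex hull of any simplex-stabilizer is uniformly quasiisometrically embedded in the model space $\widetilde X$---applies directly to give $H'$ quasiisometrically embedded. All the content is already packaged in Proposition~\ref{P:qi subsurfaces general}, which in turn rests on the width bound of Proposition~\ref{P:coarse width}. Your Cannon--Thurston approach trades that bundle geometry for Mitra's structure theory of the fibers of $\hat\iota$ and a two--dimensional lamination argument.

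There are two genuine gaps. First, the ``standard companion result'' that injectivity of $\hat\iota$ on $\Lambda(H)$ forces $H$ to be quasiisometrically embedded in $\Gamma_\Theta$ is not standard: injectivity of a Cannon--Thurston map on a sub-boundary does not formally yield quasiconvexity, since one must show every image point is a \emph{conical} limit point for $H$ acting on $\Gamma_\Theta$, and conicality in $\pi_1(S)$ does not automatically survive $\hat\iota$. The implication is true in this setting, but it is itself a theorem (of the sort later established by Mj and collaborators on algebraic ending laminations and quasiconvexity), and is arguably exactly the difficulty that Proposition~\ref{P:qi subsurfaces general} handles by other means. Second, the complementary-polygon case is a placeholder, not an argument. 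A clean fix: the diagonal $[p,q]$ lies in $\Hull(\Lambda(H))$ and hence projects into the compact subsurface $\Sigma\subset\tilde S$; the boundary leaves of the polygon adjacent to $p$ are asymptotic to this diagonal, so the $\omega$--limit set of one of their ends lies in $\Sigma$, and since the $\omega$--limit set of any half-leaf of a minimal lamination is the whole lamination, you again obtain $\widetilde\Lambda\subset\Sigma$ and the same contradiction. As for the obstacle you flag---minimality of the lift---it is not actually an obstacle: the lift of a minimal \emph{filling} lamination to a finite cover is always minimal, since otherwise some minimal component would fill a proper subsurface whose boundary is then a multicurve disjoint from the entire lift, contradicting that the lift is filling.
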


This theorem follows from Proposition \ref{P:qi subsurfaces general}, and is proven in the final section.

\subsection{Outline of the proofs}

As discussed above, both Theorems \ref{T:main} and \ref{T:generalized} follow the approach used in  \cite{kentleiningerschleimer}.  
We briefly describe the main ideas and technical results needed to carry out the proofs.  See Section \ref{S:background} for definitions.

Suppose $G < \Gamma$ is as in Theorem \ref{T:main}.  
According to \cite{kentleiningershadows} or \cite{hamenstadt}, convex cocompactness is equivalent to the orbit map $G \to G \cdot u$ to the curve complex $\C(\mathring{S})$ being a quasiisometric embedding for some vertex $u \in \C^{(0)}(\mathring{S})$.  
The difficulty establishing this criterion is proving that distances in $\C(\mathring{S})$ are coarsely bounded below by those in $G$ equipped with a word metric.  

The first step is to prove that $G$ is convex cocompact as a Kleinian group (see Lemma \ref{L:cccKleinian}).  
The corresponding step in \cite{kentleiningerschleimer} is straightforward, but here we need to appeal to some fairly technical results in Kleinian groups (Tameness \cite{agoltame,calegarigabai} and the Covering Theorem \cite{canary,canarysurvey}).   It follows that distances in $G$ are comparable to distances between orbit points in $\HH^3$, and, more importantly, in the convex hull $\Hull(G) \subset \HH^3$ of the limit set for $G$.  

It then suffices to bound the distance between orbit points in $\HH^3$  by the distance of the corresponding orbit points in $\C(\mathring{S})$.  
We do this as follows. 
For any edge--path between orbit points in $\C(\mathring{S})$ of some length $n$, we construct a piecewise geodesic path in $\HH^3$ between corresponding orbit points in $\Hull(G)$, built from $n+1$ geodesic segments.  Each of the geodesic segments is contained in a convex set $\Hull(u) \subset \HH^3$ canonically associated to one of the vertices $u$ of the path in $\C(\mathring{S})$.  Specifically, $\Hull(u) = \Hull(\pi_1(S)_u)$, where $\pi_1(S)_u < \pi_1(S)$ is the stabilizer of $u$ under the action of $\pi_1(S)$ on $\C(\mathring{S})$ coming from the natural injection $\pi_1(S) < \Gamma \to \Mod(\mathring{S})$.  

The remaining step is to prove that, when this path is projected back to $\Hull(G)$, each of the $n+1$ geodesic segments projects to a path of uniformly bounded length.
The key to this is Proposition \ref{P:bounded diameter 1}, which states that for any simplex $u \subset \C(\mathring{S})$, the set $\Hull(u) \cap \Hull(G)$ has diameter bounded independently of $u$.  

This in turn relies on Proposition \ref{P:qi subsurfaces}. To briefly describe this, first observe that the fibration $M \to S^1$ lifts to a fibration $\HH^3 \to \RR$ of the universal cover whose fibers are naturally viewed as hyperbolic planes.
Roughly speaking, Proposition \ref{P:qi subsurfaces} states that for any simplex $u \subset \C(\mathring{S})$, there is a $t_u \in \RR$ so that the convex hull of the limit set of $\pi_1(S)_u$ in the fiber over $t_u$ is uniformly close to $\Hull(u)$.
The proof of this uses recent work of the second and third authors \cite{kentleiningerwidth}.

Theorem \ref{T:generalized} follows the same basic approach.  The abstraction from hyperbolic $3$--space to a more general Gromov hyperbolic space is an obstacle that causes little difficulty.   
However, the technical results in Kleinian groups are unavailable here, and we must assume that $G$ is quasiconvex to begin with.  
The proof then reduces to proving the analogue of Proposition \ref{P:qi subsurfaces} in this setting, which is Proposition \ref{P:qi subsurfaces general}.\\

\noindent
{\bf Acknowledgements.} The authors would like to thank Ian Agol for helpful conversations.  In particular, the proof of Lemma \ref{L:cccKleinian} was inspired by an idea of Ian Agol.  The authors would also like to thank the referee for several useful comments and suggestions.

\section{Background} \label{S:background}

\subsection{Hyperbolic geometry}
\label{sec:hyperbolicGeometry}

An orientable hyperbolic $n$--manifold is the quotient of hyperbolic $n$--space $\HH^n$ by a discrete torsion--free subgroup of $\Isom^+(\HH^n)$.  
We will be primarily interested in the case of $n = 2,3$, where such a group is called a (torsion--free) \define{Kleinian group}.   
A Kleinian group $G$ is called a \define{lattice} if the volume of $\HH^n/G$ is finite.  

Hyperbolic space is compactified by adding a \define{sphere at infinity} $\partial_\infty\HH^n$ to obtain a ball $\overline{\HH}^n = \HH^n \cup \partial_\infty\HH^n$.  The limit set of a Kleinian group $G$ is the set of accumulation points of any orbit
	\[ 
		\Lambda_G = \overline{G \cdot x} \setminus G \cdot x \subset \partial_\infty\HH^n.
	\]
The limit set is independent of the choice of point $x \in \HH^n$ used to define it.
The \define{convex hull of the limit set} is the smallest closed convex set in $\HH^n$ whose closure in $\overline{\HH}^n$ contains $\Lambda_G$, and will be denoted
	\[ 
		\Hull(G) \subset \HH^n.
	\]
Since $\Lambda_G$ is $G$--invariant, so is $\Hull(G)$.

We say that a Kleinian group $G$ is \define{convex cocompact} if $\Hull(G)/G$ is compact.  If $G$ is convex cocompact, then it is \define{purely hyperbolic}, meaning that every infinite order element is hyperbolic.  
In dimension $2$, the converse is true for finitely generated $G$.

\begin{theorem}[see  {\cite[Theorem 10.1.2]{beardon}}] \label{T:Fuchsianccc}
A torsion--free Kleinian group $G < \Isom^+(\HH^2)$ (usually called a torsion--free \define{Fuchsian group}) is convex cocompact if and only if it is finitely generated and contains no parabolics.
\qed
\end{theorem}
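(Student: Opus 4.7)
The plan is to prove both directions of the equivalence. The forward direction is a direct geometric consequence of the compactness hypothesis, while the reverse direction rests on the structural description of finitely generated Fuchsian groups via finite-sided fundamental polygons.

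For the forward direction, I would suppose $\Hull(G)/G$ is compact. Since $G$ is torsion-free, it acts freely and properly discontinuously on the convex, and therefore contractible, set $\Hull(G)$, so $\Hull(G)/G$ is a compact $K(G,1)$ and in particular $G$ is finitely generated. To rule out parabolics I would argue by contradiction: if $\gamma \in G$ were parabolic with fixed point $p \in \partial_\infty \HH^2$, then $p \in \Lambda_G$ and so $\Hull(G)$ accumulates on $p$. Choosing a horoball $B$ based at $p$, the intersection $\Hull(G) \cap B$ is nonempty and $\langle \gamma \rangle$-invariant, and its quotient by $\langle \gamma \rangle$ is a cusp region of infinite diameter that embeds into $\Hull(G)/G$, contradicting compactness.

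For the reverse direction, assume $G$ is finitely generated and contains no parabolics. The key input I would invoke is that such a $G$ admits a finite-sided Dirichlet fundamental polygon, from which one deduces that $\HH^2/G$ is a hyperbolic surface of finite topological type with finitely many ends, each of which is either a cusp (corresponding to a conjugacy class of maximal parabolic subgroup) or a funnel (an infinite half-annulus bounded by a closed geodesic of the convex core). Since $G$ has no parabolics there are no cusps, so $\Hull(G)/G$ is obtained from $\HH^2/G$ by excising the finitely many open funnels, and the result is a compact surface with totally geodesic boundary.

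The main obstacle is the structural step in the reverse direction: establishing finite-sidedness of the Dirichlet polygon and extracting from it the cusp/funnel dichotomy for ends of $\HH^2/G$ contains the essential geometric content, and this is precisely what Beardon develops at length in the cited reference. Once these facts are in hand, the rest is routine: funnels account for exactly the noncompact portion of $\HH^2/G$ lying outside the convex core, so in the absence of cusps the convex core is compact.
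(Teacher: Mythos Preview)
The paper does not prove this statement: it is quoted from Beardon with a terminal \qed\ and no argument supplied. Your sketch is correct and is essentially the standard proof one finds in the cited reference---the forward direction via compactness of the $K(G,1)$ and a cusp--diameter argument, and the reverse via the finite-sidedness of a Dirichlet polygon and the cusp/funnel dichotomy for ends of a finite-type hyperbolic surface.
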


The situation is more complicated when $G < \Isom^+(\HH^3)$.  For this case, we consider a special situation.  
Suppose $\Gamma < \Isom^+(\HH^3)$ is a torsion--free lattice.  
Let $M = \HH^3/\Gamma$.  
Suppose that $M$ fibers over the circle with fiber a surface $S$
	\[
		\begin{tikzpicture}[>= to, line width = .075em, 
			baseline=(current bounding box.center)]
		\matrix (m) [matrix of math nodes, column sep=1.5em, row sep = 1em, 		text height=1.5ex, text depth=0.25ex]
		{
			S  & M & S^1. \\
		};
		\path[->,font=\scriptsize]
		(m-1-1) edge (m-1-2)
		(m-1-2) edge (m-1-3)
		;
		\end{tikzpicture}
	\]
Then the fundamental groups fit into a short exact sequence
	\[
		\begin{tikzpicture}[>= to, line width = .075em, 
			baseline=(current bounding box.center)]
		\matrix (m) [matrix of math nodes, column sep=1.5em, row sep = 1em, 		text height=1.5ex, text depth=0.25ex]
		{
			1 & \pi_1(S)  & \Gamma & \ZZ & 1, \\
		};
		\path[->,font=\scriptsize]
		(m-1-1) edge (m-1-2)
		(m-1-2) edge (m-1-3)
		(m-1-3) edge (m-1-4)
		(m-1-4) edge (m-1-5)
		;
		\end{tikzpicture}
	\]
where $\Gamma = \pi_1(M)$ and $\ZZ = \pi_1(S^1)$.
In particular, the subgroup $\pi_1(S) <  \Gamma$ is a finitely generated, infinite--index, \textit{normal} subgroup of $\Gamma$ that we call a \define{fiber group}.  
By normality, we have equality of the limit sets $\Lambda_{\pi_1(S)} = \Lambda_\Gamma = S^2_\infty$ and there is an infinite--sheeted covering
	\[ 
		\Hull(\pi_1(S))/\pi_1(S) = \Hull(\Gamma)/\pi_1(S) \to \Hull(\Gamma)/\Gamma.
	\]
By the Tameness Theorem \cite{agoltame,calegarigabai} and the Covering Theorem \cite{canary,canarysurvey}, this is essentially the only type of subgroup of $\Gamma$ which fails to be convex cocompact:

\begin{theorem} \label{T:tame+covering}
Suppose $\Gamma < \Isom^+(\HH^3)$ is a torsion--free lattice, and $G < \Gamma$ is a finitely generated subgroup without parabolics.  
Then either $G$ is convex cocompact, or else there is a subgroup $\widetilde G < G$ with index at most $2$ and a finite index subgroup $\widetilde \Gamma < \Gamma$ such that $\widetilde G < \widetilde \Gamma$ is a fiber subgroup. \qed
\end{theorem}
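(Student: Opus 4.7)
The plan is to deduce the theorem directly from the two inputs cited in its statement. Let $N = \HH^3/G$ and $M = \HH^3/\Gamma$. Since $G$ is finitely generated and torsion--free, I would first invoke the Tameness Theorem \cite{agoltame,calegarigabai} to conclude that $N$ is homeomorphic to the interior of a compact $3$--manifold. Combined with the Thurston--Bonahon--Canary classification of ends of topologically tame hyperbolic $3$--manifolds, each end of $N$ is then either geometrically finite or simply degenerate, and the hypothesis that $G$ has no parabolics ensures there are no cusps, so this accounts for all ends of $N$.

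If every end of $N$ is geometrically finite, then in the absence of parabolics a standard truncation argument gives that $\Hull(G)/G$ is compact, so $G$ is convex cocompact and we are in the first alternative. Otherwise there is a simply degenerate end $E$, and I would then apply Canary's Covering Theorem \cite{canary,canarysurvey} to the cover $N \to M$. Since $M$ has finite volume, the conclusion is that $E$ has a neighborhood which covers its image in $M$ by a finite-to-one map, and that $M$ admits a finite cover $\widetilde M$ which either fibers over $S^1$ or is a twisted $I$--bundle over a closed surface $\Sigma$, with $E$ matched to an end whose cross section is $\Sigma$.

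Setting $\widetilde \Gamma = \pi_1(\widetilde M)$ produces the required finite-index subgroup of $\Gamma$. In the fibered case, $\pi_1(\Sigma)$ is a normal fiber subgroup of $\widetilde \Gamma$ and the Covering Theorem identifies $G$ with a conjugate of $\pi_1(\Sigma)$ inside $\widetilde \Gamma$, so one may take $\widetilde G = G$. In the twisted $I$--bundle case, $\pi_1(\Sigma)$ sits in $\widetilde \Gamma$ as an index--$2$ normal subgroup, and the Covering Theorem then identifies an index-at-most-$2$ subgroup of $G$ with the fiber subgroup of the corresponding orientation double cover of $\widetilde M$. The main subtlety I anticipate lies in this last case: one must verify from Canary's statement that the ambiguity is precisely index $2$ and no larger, which amounts to carefully relating the semi-bundle geometry of $\widetilde M$ to the orientation behavior of $G$'s action on a neighborhood of $E$. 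Once that is done, the dichotomy in the theorem follows directly.
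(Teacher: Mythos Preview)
The paper does not supply its own proof here: the theorem is stated with a \qed{} immediately after, as a black-box consequence of the Tameness Theorem and Canary's Covering Theorem. Your outline is the standard derivation those citations point to, so there is nothing in the paper to compare against beyond checking that your sketch is sound.

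There is, however, a real confusion in your sketch about where the twisted $I$--bundle appears. A finite cover $\widetilde M$ of the finite-volume manifold $M$ again has finite volume and so cannot be a twisted $I$--bundle over a closed surface; in particular $\widetilde\Gamma = \pi_1(\widetilde M)$ is never a surface group, contrary to what your ``twisted $I$--bundle case'' would require. Canary's dichotomy is: either (a) a neighborhood of $E$ maps finite-to-one into $M$, or (b) $M$ has a finite cover $\widetilde M$ fibering over $S^1$ with fiber $S$, and $N_S = \HH^3/\pi_1(S)$ is a finite cover of $N$. When $M$ has finite volume, alternative (a) is ruled out because the image of a simply degenerate end neighborhood would have infinite volume, so you land in (b). The index-$2$ ambiguity then lives entirely on the $N$ side: since $N_S \cong S \times \RR$ has exactly two ends, the finite cover $N_S \to N$ forces $N$ itself to be either a product $\Sigma \times \RR$ or the interior of an orientable twisted $I$--bundle over a nonorientable surface. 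In the product case $G$ is an orientable surface group and one can take $\widetilde G = G$; in the twisted $I$--bundle case the orientation double cover produces the index-$2$ subgroup $\widetilde G$. Your instinct that this step needs care is correct, but the source of the $2$ is the topology of $N = \HH^3/G$, not of $\widetilde M$.
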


\subsection{Coarse geometry}

Let $\delta \geq 0$.
A geodesic triangle $\triangle$ in a geodesic metric space $X$ is \textbf{$\delta$--thin} if each of its sides lies in the $\delta$--neighborhood of the union of the other two sides.
A geodesic metric space $X$ is \textbf{$\delta$--hyperbolic} if every geodesic triangle is $\delta$--thin.

Let $K$ and $C$ be positive numbers.
A map $f\co X \to Y$ between metric spaces is a \textbf{$(K,C)$--quasiisometric embedding} if
	\[
		\frac{1}{K} d_X(a,b) - C \leq d_Y(f(a),f(b)) \leq K d_X(a,b) + C
	\]
for all $a$ and $b$ in $X$.
A quasiisometric embedding $f$ is a \textbf{quasiisometry} if its image is $D$--dense for some $D$. A quasiisometric embedding $\mathcal G\co I\to X$ from an interval $I\subseteq \RR$ into $X$ is called a \define{quasigeodesic}. 

If $A > 0$, a subset $Y$ of a geodesic metric space $X$ is \textbf{$A$--quasiconvex} if each geodesic joining points of $Y$ lies in the $A$--neighborhood of $Y$.

The \define{Gromov boundary} $\partial_\infty X$ of a proper $\delta$--hyperbolic space $X$ is defined to be the set of equivalence classes of quasigeodesic rays $\mathcal G\co [0,\infty)\to X$, where two rays are equivalent if they have finite Hausdorff distance. 
In this way each biinfinite quasigeodesic determines two distinct endpoints at infinity. 
The following consequence of $\delta$--hyperbolicity is well known; for a proof see \cite[Theorem III.1.7]{bridsonhaefliger}.

\begin{theorem}[Stability of quasigeodesics]\label{T:QIstability}
Given $K,C,\delta>0$, there exists a \define{stability constant} $R>0$ with the following property: For any $(K,C)$--quasigeodesic $\mathcal G'$ in a $\delta$--hyperbolic space $X$, every geodesic $\mathcal G$ in $X$ with the same endpoints (possibly at infinity) has Hausdorff distance at most $R$ from $\mathcal G'$. \qed
\end{theorem}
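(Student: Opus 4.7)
The plan is to establish the Hausdorff bound in two directions: first that every point of $\mathcal{G}$ lies within a uniform distance of $\mathcal{G}'$, then the converse. Before either step, I would reduce to the case of a continuous quasigeodesic by sampling $\mathcal{G}'$ at integer parameter values and joining successive samples by geodesic segments; the resulting continuous $(K, C')$--quasigeodesic lies within Hausdorff distance $K+C$ of $\mathcal{G}'$, so it suffices to treat this case with slightly degraded constants.

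The main step relies on a standard consequence of $\delta$--thinness that I will call the \emph{logarithmic divergence lemma}: in a $\delta$--hyperbolic space, for any continuous path $\gamma \co [a,b] \to X$ of length $\ell \geq 1$ joining the endpoints of a geodesic segment $[p,q]$, every point of $[p,q]$ lies within distance $\delta \log_2 \ell + 1$ of the image of $\gamma$. The proof is by repeatedly bisecting $[p,q]$ and invoking the $\delta$--thin triangle condition at each level. Given the lemma, let $x_0 \in \mathcal{G}$ maximize $D \colonequals d(x_0, \mathcal{G}')$ and choose $y_1, y_2 \in \mathcal{G}$ at distance $2D$ from $x_0$ on either side (or the endpoints of $\mathcal{G}$ if $\mathcal{G}$ is too short, which forces $D$ to be small directly). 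Pick $z_i \in \mathcal{G}'$ with $d(y_i, z_i) \leq D$; then the concatenation of $[y_1, z_1]$, the subarc of $\mathcal{G}'$ from $z_1$ to $z_2$, and $[z_2, y_2]$ is a continuous path from $y_1$ to $y_2$ of length at most $2D + K\, d(z_1, z_2) + C \leq (6K+2)D + KC + C$. Applying the divergence lemma to this path and the subsegment of $\mathcal{G}$ containing $x_0$ yields
\[
D \;\leq\; \delta \log_2\!\bigl((6K+2)D + KC + C\bigr) + 1,
\]
which forces $D$ to be bounded by a constant $R_1 = R_1(K, C, \delta)$.

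For the reverse direction, suppose $w = \mathcal{G}'(t)$ satisfies $d(w, \mathcal{G}) > R_1$. Let $[a,b] \ni t$ be the maximal parameter subinterval on which $\mathcal{G}'$ stays at distance greater than $R_1$ from $\mathcal{G}$; by maximality and continuity, the endpoints $\mathcal{G}'(a), \mathcal{G}'(b)$ lie within distance $R_1$ of points $y_a, y_b \in \mathcal{G}$. Since the portion of $\mathcal{G}$ strictly between $y_a$ and $y_b$ is within $R_1$ of $\mathcal{G}'$ by the first step but cannot be realized by $\mathcal{G}'|_{[a,b]}$ (which is too far away), one argues that $d_X(y_a, y_b)$ is itself bounded in terms of $R_1$, $K$, and $C$. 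Combined with the triangle inequality and the quasigeodesic lower bound $(b-a)/K - C \leq d(\mathcal{G}'(a), \mathcal{G}'(b))$, this bounds $b - a$, hence the diameter of $\mathcal{G}'|_{[a,b]}$, and therefore $d(w, \mathcal{G})$ by a constant $R_2$. Setting $R = \max(R_1, R_2)$ completes the case of finite endpoints, and the case of endpoints at infinity follows by exhausting $\mathcal{G}'$ by finite subarcs, applying the bounded case to the geodesic segments joining their endpoints, and passing to a Hausdorff limit using properness of $X$. The principal obstacle is the first step: securing the logarithmic divergence lemma and balancing the constants so that the logarithm beats the linear term in $D$. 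After that, the rest is essentially bookkeeping.
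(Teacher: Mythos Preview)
The paper does not supply its own proof of this theorem: it is stated with a terminal \qed and the reader is referred to \cite[Theorem III.H.1.7]{bridsonhaefliger}. Your proposal is precisely the standard argument found there---tame the quasigeodesic, invoke the logarithmic divergence estimate to bound the distance from the geodesic to the quasigeodesic, then use a maximal-excursion argument for the reverse inclusion---so there is nothing to compare. The sketch is correct; the only place where a reader might want more detail is the reverse step, where the bound on $d(y_a,y_b)$ (equivalently on $b-a$) comes from a connectedness argument on the parameter interval rather than directly from the first step, but you flag this as bookkeeping and it is.
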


A finitely generated group is made into a metric space by equipping its Cayley graph with the path metric induced by declaring that edges have length one and giving the group the subspace metric.  
Up to quasiisometry, this metric does not depend on the finite generating set. 
The group is said to be \define{$\delta$--hyperbolic} if there is a choice of finite generating set such that its Cayley graph is $\delta$--hyperbolic. 
We will make frequent use of the following well--known fact; see \cite[Proposition I.8.19]{bridsonhaefliger} for a proof.

\begin{theorem}[\v{S}varc--Milnor lemma]
\label{T:svarc-milnor}
If $X$ is a proper geodesic metric space, and $G$ acts properly and cocompactly by isometries on $X$, then $X$ and $G$ are quasiisometric. 
In fact, for any $x$ in $X$, the orbit map $G \to X$ given by $g \mapsto gx$ is a quasiisometry. \qed
\end{theorem}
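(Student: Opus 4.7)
The plan is to execute the classical proof of the \v Svarc--Milnor lemma in two synchronized steps: produce a finite generating set for $G$ out of the geometry of the action, and use the same construction to simultaneously get the two-sided quasi-isometry estimate for the orbit map.

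First, I would use cocompactness to extract a constant $D>0$ controlling how coarsely the orbit $Gx$ fills $X$. Choose a compact set $K\subset X$ with $G\cdot K=X$, and let $D$ be the diameter of $K\cup\{x\}$. Then for every $y\in X$ there is some $g\in G$ with $d_X(y,gx)\leq D$, which gives the $D$--density of the image of the orbit map $g\mapsto gx$. Next, define the candidate generating set
\[
S \;=\; \{\, g\in G \,:\, d_X(x,gx)\leq 2D+1\,\},
\]
and observe that $S$ is finite: properness of the action means that only finitely many elements of $G$ can carry $x$ into any given bounded region, and $S$ is by definition such a finite collection.

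The core step is to prove simultaneously that $S$ generates $G$ and that the orbit map is a quasi-isometric embedding. Given $g\in G$, subdivide a geodesic from $x$ to $gx$ into segments of length at most one, producing points $x=y_0,y_1,\ldots,y_n=gx$ with $n\leq \lceil d_X(x,gx)\rceil$. For each $i$, pick $g_i\in G$ with $d_X(y_i,g_ix)\leq D$, taking $g_0=1$ and $g_n=g$. The triangle inequality then gives $d_X(g_{i-1}x,g_ix)\leq 2D+1$, so that each $s_i:=g_{i-1}^{-1}g_i$ lies in $S$ and $g=s_1\cdots s_n$. This shows $S$ generates $G$ and yields the upper bound
\[
|g|_S \;\leq\; d_X(x,gx)+1.
\]
The opposite inequality is immediate: if $g=s_1\cdots s_m$ is a minimal-length $S$--word, then $d_X(x,gx)\leq\sum_i d_X(x,s_ix)\leq (2D+1)|g|_S$.

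Finally, equivariance of the orbit map under left multiplication translates these word-length estimates into the quasi-isometric embedding condition for $g\mapsto gx$, since $d_G(g,h)=|g^{-1}h|_S$ and $d_X(gx,hx)=d_X(x,g^{-1}hx)$. Combined with the $D$--density from cocompactness, this upgrades the embedding to a quasi-isometry. There is no real obstacle to the argument; the only care needed is choosing the threshold $2D+1$ in the definition of $S$ so that the geodesic-partition step closes up, and verifying that the properness hypothesis genuinely supplies finiteness of $S$.
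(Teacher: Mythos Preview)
Your argument is correct and is essentially the classical proof appearing in the reference the paper cites (Bridson--Haefliger, Proposition~I.8.19). Note that the paper itself does not give a proof of this statement: it is recorded with a \qed\ and a pointer to the literature, so there is no ``paper's own proof'' to compare against beyond that citation. One minor remark: when you invoke properness to conclude $S$ is finite, you are implicitly using that $X$ is proper so that the closed ball $B(x,2D+1)$ is compact; you might make that explicit, but otherwise the proof is complete.
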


The following is a straightforward consequence of the stability of quasigeodesics.
\begin{proposition} If $\Gamma$ is $\delta$--hyperbolic and $G< \Gamma$ is finitely generated, then $G$ is quasiisometrically embedded if and only if it is quasiconvex. \qed
\end{proposition}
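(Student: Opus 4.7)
The plan is to prove the two implications separately, appealing to the stability of quasigeodesics (Theorem~\ref{T:QIstability}) for the forward direction and a pigeonhole argument exploiting local finiteness of Cayley graphs for the reverse.

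For ``quasiisometrically embedded $\Rightarrow$ quasiconvex'', I would fix $g, h \in G$ and take a word geodesic $\alpha$ in the Cayley graph of $G$ from $g$ to $h$. Since the inclusion $G \hookrightarrow \Gamma$ is a $(K,C)$--quasiisometric embedding, linearly interpolating the image of $\alpha$ across edges of $\Gamma$ presents it as a $(K', C')$--quasigeodesic in the Cayley graph of $\Gamma$, with $K'$ and $C'$ depending only on $K$ and $C$. Theorem~\ref{T:QIstability} then supplies a constant $R = R(K', C', \delta)$ so that any $\Gamma$--geodesic $\beta$ from $g$ to $h$ lies within Hausdorff distance $R$ of $\alpha$. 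Because $\alpha$ lies within distance $\tfrac{1}{2}$ of $G$, the geodesic $\beta$ lies in the $(R + \tfrac{1}{2})$--neighborhood of $G$, establishing quasiconvexity.

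For the converse, the inclusion $G \hookrightarrow \Gamma$ is Lipschitz since each $G$--generator has finite $\Gamma$--word length, so the inequality $d_\Gamma(g,h) \leq L \cdot d_G(g,h)$ is immediate. For the reverse inequality, I would fix $g, h \in G$ together with a $\Gamma$--geodesic $\beta$ from $g$ to $h$ of length $n = d_\Gamma(g,h)$. By $A$--quasiconvexity, each integer--parameterized vertex $v_i$ of $\beta$ admits some $\tilde v_i \in G$ with $d_\Gamma(v_i, \tilde v_i) \leq A$, and I take $\tilde v_0 = g$ and $\tilde v_n = h$. Consecutive choices then satisfy $d_\Gamma(\tilde v_i, \tilde v_{i+1}) \leq 2A+1$, so $\tilde v_i^{-1}\tilde v_{i+1}$ lies in the set $G \cap B_\Gamma(e, 2A+1)$, which is finite because $\Gamma$'s Cayley graph is locally finite. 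Since $G$ is finitely generated, every element of this finite set has some uniformly bounded $G$--word length $M$, and concatenating yields $d_G(g,h) \leq Mn = M \cdot d_\Gamma(g, h)$.

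No serious obstacle is anticipated; the argument is essentially standard. The only mildly technical point is the interpolation in the forward direction, which is the routine observation that a quasiisometric embedding defined on the vertex set of a Cayley graph extends to a quasigeodesic on the 1--skeleton with constants worsened only by amounts controlled by the Lipschitz constant of the inclusion.
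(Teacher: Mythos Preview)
Your argument is correct and is the standard proof of this well-known fact. The paper does not actually supply a proof: the proposition is stated with an immediate \qed, prefaced only by the remark that it is ``a straightforward consequence of the stability of quasigeodesics,'' which is precisely the content of your forward implication; your pigeonhole argument for the converse is the usual one and requires nothing beyond local finiteness.
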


We conclude this section by noting that, for Kleinian groups, convex cocompactness may be reformulated in terms of coarse geometry as follows.

\begin{theorem} \label{T:Kleiniancccqi}
A Kleinian group $G < \Isom^+(\HH^n)$ is convex cocompact if and only if the orbit map $G \to G \cdot x \subset \HH^n$ is a quasiisometric embedding. \qed
\end{theorem}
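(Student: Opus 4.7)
The plan is to argue both directions via the structure of $\Hull(G)$ as a closed, $G$--invariant, convex subset of $\HH^n$.

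For the forward direction, assume $\Hull(G)/G$ is compact. As a closed convex subset of $\HH^n$, the hull $\Hull(G)$ is a proper geodesic metric space whose intrinsic path metric agrees with the restriction of the $\HH^n$ metric, and $G$ acts on it properly and cocompactly. The \v{S}varc--Milnor lemma (Theorem \ref{T:svarc-milnor}) then provides, for any $x_0 \in \Hull(G)$, a quasiisometry $G \to \Hull(G)$ via the orbit map $g \mapsto g \cdot x_0$, which composes with the isometric inclusion $\Hull(G) \hookrightarrow \HH^n$ to give a quasiisometric embedding $G \to \HH^n$. For a general $x \in \HH^n$, the $G$--equivariant nearest point projection onto $\Hull(G)$ changes each orbit point by a uniformly bounded amount, so the orbit map based at $x$ remains a quasiisometric embedding.

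For the converse, suppose the orbit map is a $(K,C)$--quasiisometric embedding. Pulling back $\delta$--hyperbolicity from $\HH^n$ shows that $G$ is word hyperbolic, and the QI embedding extends continuously to the Gromov boundary, identifying $\partial_\infty G$ with the limit set $\Lambda_G \subset \partial_\infty \HH^n$. The goal now is to produce $R > 0$ with $\Hull(G) \subset \mathcal{N}_R(G \cdot x)$, which will establish cocompactness. The key geometric fact needed is that in the negatively curved space $\HH^n$, the convex hull of $\Lambda_G$ lies within a uniform neighborhood of the union of biinfinite geodesics whose two ideal endpoints both lie in $\Lambda_G$.

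Granted this, for any $p \in \Hull(G)$ one selects $\xi, \eta \in \Lambda_G$ with $p$ near the geodesic $[\xi, \eta]$. Taking a biinfinite geodesic in the Cayley graph of $G$ joining the points of $\partial_\infty G$ corresponding to $\xi$ and $\eta$, its image under the orbit map is a $(K,C)$--quasigeodesic in $\HH^n$ asymptotic to $\xi$ and $\eta$, which by stability (Theorem \ref{T:QIstability}) lies within bounded Hausdorff distance of $[\xi, \eta]$. Together these estimates produce some $g \in G$ with $d_{\HH^n}(p, g \cdot x)$ bounded by a constant depending only on $K$, $C$, $\delta$, and $n$, giving the desired cocompactness.

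I expect the main obstacle to be justifying the coarse description of $\Hull(G)$ as a bounded neighborhood of the biinfinite geodesics with endpoints in $\Lambda_G$. When $n = 2$ this holds on the nose, since the convex hull is literally this union; for $n \geq 3$ one must appeal to the uniform thinness of ideal simplices in $\HH^n$, which ultimately rests on the negative curvature of hyperbolic space. A secondary technical point, the continuous extension of the orbit map to $\partial_\infty G \to \Lambda_G$, is standard once $G$ is known to be word hyperbolic.
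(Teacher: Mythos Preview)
The paper does not actually prove this theorem: it is stated in the background section with a terminal \qed and no argument, i.e., it is quoted as a standard fact. So there is no ``paper's own proof'' to compare against.

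Your argument is the standard one and is essentially correct. The forward direction via the \v{S}varc--Milnor lemma on $\Hull(G)$ is exactly right. For the converse, the outline---pull back hyperbolicity to $G$, extend the orbit map to a boundary embedding $\partial_\infty G \hookrightarrow \partial_\infty\HH^n$ with image $\Lambda_G$, then use stability of quasigeodesics to trap $\Hull(G)$ in a uniform neighborhood of $G\cdot x$---is sound. Two small points worth tightening: (i) the statement implicitly assumes $G$ is finitely generated so that the word metric and hence the phrase ``quasiisometric embedding'' make sense; you should say this explicitly. (ii) When you ``take a biinfinite geodesic in the Cayley graph of $G$ joining the points of $\partial_\infty G$ corresponding to $\xi$ and $\eta$,'' you are using that a proper geodesic $\delta$--hyperbolic space is a visibility space; this is standard but should be acknowledged. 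The fact you flag as the main obstacle---that $\Hull(\Lambda_G)$ lies in a uniform neighborhood of the union of biinfinite geodesics with both endpoints in $\Lambda_G$---is indeed the only nontrivial geometric input, and your sketch of why it holds (thinness of ideal simplices in $\HH^n$) is the right idea.
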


\subsection{Mapping class groups and the complex of curves}

Let $S$ be a finite--volume hyperbolic surface, we may then identify $\pi_1(S)$ with a lattice in $\Isom(\HH^2)$ and write $S = \HH^2/\pi_1(S)$.
The \define{complexity} of $S$ is $\xi(S) = 3g-3 + n$, where $g$ is the genus of $S$ and $n$ is the number of its punctures.   
We assume throughout that $\xi(S) \geq 1$, which means that $S$ has negative Euler characteristic and is not a thrice--punctured sphere.  
The \define{mapping class group} $\Mod(S)$ of $S$ is the group of isotopy classes of orientation--preserving homeomorphisms of $S$.  

The mapping class group acts on a number of spaces, but for our purposes, the most important one is the \define{complex of curves} $\C(S)$.  
This is a simplicial complex whose vertices are isotopy classes of essential simple closed curves---these are precisely the isotopy classes with simple closed geodesic representatives.   
When $\xi(S) > 1$, we say that $k+1$ distinct isotopy classes span a $k$--simplex if and only if they can be realized disjointly on the surface (equivalently, their geodesic representatives are all disjoint).  
When $\xi(S) = 1$, the surface $S$ is either a once--punctured torus or a four--times punctured sphere.  In these cases, $k+1$ isotopy classes are the vertices of a $k$--simplex if and only if they pairwise intersect once or twice, respectively.  

We view $\C(S)$ as either a combinatorial object or a geometric object.  
For the latter, we declare each simplex to be isometric to a regular Euclidean simplex, and give $\C(S)$ the induced path metric.  
We make extensive use of the following celebrated theorem of Masur and Minsky \cite{MM1}.

\begin{theorem}[Masur--Minsky \cite{MM1}] \label{T:MMhyp}
For any $S$ there is a $\delta > 0$ such that $\C(S)$ is $\delta$--hyperbolic.
\qed
\end{theorem}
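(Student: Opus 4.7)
The plan is to follow the original Masur--Minsky strategy of coupling $\C(S)$ to Teichm\"uller space $\T(S)$ via a coarse systole projection, then transferring enough of the hyperbolic-like structure of Teichm\"uller geodesics to prove thin triangles in $\C(S)$. Concretely, define a map $\Sigma\co \T(S) \to \C(S)$ sending a hyperbolic structure $X$ to any of its systoles (shortest essential simple closed geodesics). By the Bers constant, $\Sigma(X)$ is well-defined up to uniformly bounded $\C(S)$-diameter, and since hyperbolic lengths distort by a bounded factor under bounded Teichm\"uller displacement, $\Sigma$ is coarsely Lipschitz. The map is coarsely surjective because given any essential simple closed curve $\alpha$, pinching $\alpha$ makes $\alpha$ the systole.

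The central technical step is to show that the $\Sigma$-image of any Teichm\"uller geodesic $\gamma\co I \to \T(S)$ is an unparameterized quasigeodesic in $\C(S)$. The key mechanism is a \emph{contraction property}: when $\gamma$ enters a thin part $\T_{<\epsilon}(\alpha)$ of Teichm\"uller space in which a single curve $\alpha$ is extremely short, $\Sigma$ is locked onto $\alpha$ throughout that subsegment, so all of the geodesic's progress in $\C(S)$ happens away from these excursions. Combining this with the fact that the vertical and horizontal foliations of the quadratic differential defining $\gamma$ control intersection numbers, one shows that if two Teichm\"uller points $X,Y \in \gamma$ satisfy $d_{\C(S)}(\Sigma(X),\Sigma(Y)) = n$, then the subsegment between them visits at least $\Omega(n)$ disjoint active intervals, giving the linear lower bound needed for an unparameterized quasigeodesic.

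To promote this to hyperbolicity, I would use a thin-triangles criterion in the form of Bowditch's \emph{guessing geodesics} lemma: it suffices to assign to each pair $(\alpha,\beta) \in \C^{(0)}(S)^2$ a connected path $\eta_{\alpha,\beta}$ such that nearby endpoints yield Hausdorff-close paths and such that triangles $\eta_{\alpha\beta} \cup \eta_{\beta\gamma} \cup \eta_{\gamma\alpha}$ are uniformly thin. Take $\eta_{\alpha,\beta}$ to be the $\Sigma$-image of a Teichm\"uller geodesic between pinched structures realizing $\alpha$ and $\beta$ as systoles. Thinness of a triangle reduces to the statement that if a systole $u$ appears in the middle of one side, its short-curve persistence along the associated Teichm\"uller segment forces a nearby short curve on one of the other two Teichm\"uller sides (by the mapping class group action and a pigeonhole argument on which foliation becomes unstable first), and hence a vertex of $\C(S)$-distance uniformly close to $u$ on one of the adjacent paths.

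The main obstacle is establishing the contraction / active-interval property, since Teichm\"uller space itself is not Gromov hyperbolic and one must rule out the possibility that a Teichm\"uller geodesic spends a very long time in the thick part producing many distinct systoles without corresponding $\C(S)$-progress. The standard route is to quantify how slowly systoles can change in the thick part using Mumford compactness together with Wolpert-type lower bounds on the derivative of hyperbolic length along Teichm\"uller deformations; once this is in place, the $\C(S)$-distance between two thick systoles is controlled linearly by the Teichm\"uller length of the connecting segment outside the thin parts, and the thin-part contribution is absorbed by the locking phenomenon.
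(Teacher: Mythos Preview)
The paper does not prove this theorem: it is quoted as background with a citation to Masur--Minsky \cite{MM1} and an immediate \qed. So there is nothing in the paper to compare your proposal against.

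That said, your outline is a recognisable hybrid of the original Masur--Minsky argument and later simplifications. The systole map $\Sigma\co\T(S)\to\C(S)$, its coarse Lipschitz property, and the fact that Teichm\"uller geodesics project to unparameterized quasigeodesics are all genuine ingredients of \cite{MM1}. However, the hyperbolicity criterion actually used there is not Bowditch's guessing-geodesics lemma (which came later) but a direct contraction argument: Masur and Minsky show that the closest-point projection from $\C(S)$ to the image of a Teichm\"uller geodesic is coarsely contracting, and deduce hyperbolicity from that. The technical core is their \emph{nesting lemma}, which controls how balls in $\C(S)$ project to such images; your ``active intervals'' and ``which foliation becomes unstable first'' paragraphs gesture at the right phenomena but do not yet isolate a statement that would stand in for that lemma. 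If you want to use Bowditch's criterion instead, the cleanest route is not via Teichm\"uller geodesics but via train-track splitting sequences, as in Bowditch's own proof. Either way, for the purposes of the present paper a citation is all that is needed.
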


An element of $\Mod(S)$ is pseudo-Anosov if it has positive (asymptotic) translation length on $\C(S)$.  
That is, the pseudo-Anosov elements are precisely the analogues of the hyperbolic isometries of $\HH^n$.

\subsection{Exact sequences}

We will also be concerned with the \define{marked surface} $\mathring{S}$, which is simply the surface $S$ equipped with a distinguished basepoint (or an additional preferred puncture).
 In the corresponding based mapping class group $\Mod(\mathring{S})$, homeomorphisms and isotopies are required to fix the basepoint. 
 There is a natural surjection $\Mod(\mathring{S}) \to \Mod(S)$ obtained by simply `forgetting' the basepoint. Birman \cite{birmanExactSeq,birmanBook} showed that the kernel of this map may be identified with $\pi_1(S)$ thus giving an exact sequence
	\begin{equation}\label{eqn:BES}
		\begin{tikzpicture}[>= to, line width = .075em, 
			baseline=(current bounding box.center)]
		\matrix (m) [matrix of math nodes, column sep=1.5em, row sep = 1em, 		text height=1.5ex, text depth=0.25ex]
		{
			1 & \pi_1(S)  & \Mod(\mathring{S}) & \Mod(S) & 1. \\
		};
		\path[->,font=\scriptsize]
		(m-1-1) edge (m-1-2)
		(m-1-2) edge (m-1-3)
		(m-1-3) edge (m-1-4)
		(m-1-4) edge (m-1-5)
		;
		\end{tikzpicture}
	\end{equation}
The injective homomorphism $\Mod(S) \to \Out(\pi_1(S))$ naturally gives rise to an inclusion of short exact sequences
	\begin{equation}\label{E:innautout}
		\begin{tikzpicture}[>= to, line width = .075em, 
			baseline=(current bounding box.center)]
		\matrix (m) [matrix of math nodes, column sep=1.5em, row sep = 1.5em, 		text height=1.5ex, text depth=0.25ex]
		{
			1 & \pi_1(S)  & \Mod(\mathring{S}) & \Mod(S) & 1 \\
			1 & \pi_1(S)  & \Aut(\pi_1(S)) & \Out(\pi_1(S)) & 1 \\
		};
		\path[->,font=\scriptsize]
		(m-1-1) edge 					(m-1-2)
		(m-1-2) edge 					(m-1-3)
		(m-1-3) edge 					(m-1-4)
		(m-1-4) edge 					(m-1-5)
		
		(m-2-1) edge 					(m-2-2)
		(m-2-2) edge 					(m-2-3)
		(m-2-3) edge 					(m-2-4)
		(m-2-4) edge 					(m-2-5)
		
		(m-1-3) edge					(m-2-3)
		(m-1-4) edge					(m-2-4)
		;
		\draw[double distance = .15em,font=\scriptsize]
		(m-2-2) -- 					(m-1-2)
		;
		\end{tikzpicture}
	\end{equation}

The following result of Kra \cite{kra} provides many pseudo-Anosov elements.  
Recall that a loop $\gamma \in \pi_1(S)$ is \define{filling} if the geodesic representative of the free homotopy class cuts $S$ into disks and once--punctured disks.
\begin{theorem}[Kra \cite{kra}]\label{T:kra}
An element $\gamma \in \pi_1(S)$ is pseudo-Anosov as an element of $\Mod(\mathring{S})$ if and only if $\gamma$ is filling as an element of $\pi_1(S)$.
\qed
\end{theorem}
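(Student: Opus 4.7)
The plan is to invoke the Nielsen--Thurston classification. Since \eqref{eqn:BES} realizes $\pi_1(S)$ as the kernel of $\Mod(\mathring{S})\to\Mod(S)$, any nontrivial $\gamma\in\pi_1(S)$ has infinite-order image $P_\gamma\in\Mod(\mathring{S})$; thus $P_\gamma$ is pseudo-Anosov if and only if it fixes no isotopy class of essential simple closed curve on $\mathring{S}$.

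For the ``only if'' direction, suppose $\gamma$ is not filling. Then a geodesic representative of $\gamma$ in $S$ is disjoint from some essential simple closed curve $c$ of $S$ (either a curve parallel to $\gamma$, if $\gamma$ is simple, or a curve inside a complementary region of positive complexity, otherwise). Realizing $P_\gamma$ by an isotopy of $S$ supported in a thin neighborhood of the geodesic of $\gamma$, disjoint from $c$, shows $P_\gamma$ fixes $[c]$; since $c$ is also essential in $\mathring{S}$, we conclude $P_\gamma$ is reducible.

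For the ``if'' direction, suppose $P_\gamma([c])=[c]$ for some essential $c\subset\mathring{S}$; I will show $\gamma$ is not filling. The image $\bar c\subset S$ is essential, for otherwise $\bar c$ bounds a disk $D$ and $c$ either bounds $D$ in $\mathring{S}$ (if $*\notin D$) or is peripheral (if $*\in D$), contradicting essentiality of $c$. Realize $P_\gamma$ via an ambient isotopy $\phi_t\co S\to S$ from $\mathrm{id}$ with $\phi_t(*)=\gamma(t)$. Then the mapping torus of $\phi_1$ on $S$ is diffeomorphic to $S\times S^1$, and removing the loop $\gamma'$ traced by $*$ recovers the mapping torus of $P_\gamma$ on $\mathring{S}$. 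The invariance of $[c]$ lets us extend $\phi_t$ so that $c$ returns to itself, sweeping out a torus $T_c\subset (S\times S^1)\setminus\gamma'$ containing $\bar c$ as a horizontal slice. Since $\bar c$ is essential in $S$, the inclusion $\pi_1(T_c)\hookrightarrow\pi_1(S\times S^1)=\pi_1(S)\times\ZZ$ is injective and $T_c$ is incompressible. Waldhausen's classification of incompressible surfaces in the Seifert fibered manifold $S\times S^1$ then forces $T_c$ to be ambient isotopic to the vertical torus $\bar c\times S^1$. Carrying $\gamma'$ along this isotopy places it in the complement $(S\setminus\bar c)\times S^1$, and projecting to the $S$ factor yields a free homotopy of $\gamma$ into $S\setminus\bar c$, contradicting filling.

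The main obstacle is the 3-manifold step: one must verify incompressibility of $T_c$ and carefully apply Waldhausen's classification, especially when $S$ has punctures so that $S\times S^1$ has toral boundary. A more powerful but less elementary alternative is to invoke Thurston's hyperbolization for link complements, which identifies hyperbolicity of $(S\times S^1)\setminus\gamma'$ with filling-ness of $\gamma$ and delivers pseudo-Anosovity of $P_\gamma$ directly from Thurston's classification of monodromies of hyperbolic surface bundles.
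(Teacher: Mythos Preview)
The paper does not prove this theorem; it is quoted as a result of Kra with a citation and a \qed.  So there is no argument in the paper to compare against.  Your outline is a valid route to the result and is closer in spirit to the Birman--Series and 3--manifold viewpoints than to Kra's original Teichm\"uller--theoretic proof.

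Two small points deserve tightening.  First, infinite order and not pseudo-Anosov only gives that $P_\gamma$ is reducible, i.e.\ preserves a \emph{multicurve}; a single component need only be fixed by a power $P_\gamma^k = P_{\gamma^k}$.  Run your ``if'' argument for $\gamma^k$ and observe that $\gamma$ is filling iff $\gamma^k$ is.  Second, in the ``only if'' direction you silently move the marked point onto the geodesic representative of $\gamma$ when you support the isotopy there; this is legitimate because pseudo-Anosovness is a conjugacy invariant in $\Mod(\mathring{S})$ and conjugating $P_\gamma$ by a point--push corresponds to changing basepoint, but it should be said.  Your caveat about Waldhausen in the cusped case is well placed: compactify $S\times S^1$ by boundary tori, note $T_c$ is closed hence properly embedded, and observe that a horizontal incompressible surface would have $\chi<0$, forcing $T_c$ vertical over a simple closed curve freely homotopic to $\bar c$; that is all you need.
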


\subsection{Surface and orbifold group extensions}

\label{sec:groupExtensions}
Given $\Theta < \Mod(S)$, the exact sequence \eqref{eqn:BES} can be used to describe a $\pi_1(S)$--extension $\Gamma_\Theta$.  
Specifically, we can define $\Gamma_\Theta$ as the preimage of $\Theta$ in $\Mod(\mathring{S})$, which gives rise to an inclusion of short exact sequences
	\begin{equation}\label{E:ses inclusions}
		\begin{tikzpicture}[>= to, line width = .075em, 
			baseline=(current bounding box.center)]
		\matrix (m) [matrix of math nodes, column sep=1.5em, row sep = 1.5em, 		text height=1.5ex, text depth=0.25ex]
		{
			1 & \pi_1(S)  & \Gamma_\Theta & \Theta & 1 \\
			1 & \pi_1(S)  & \Mod(\mathring{S}) & \Mod(S) & 1 \\
		};
		\path[->,font=\scriptsize]
		(m-1-1) edge 					(m-1-2)
		(m-1-2) edge 					(m-1-3)
		(m-1-3) edge 					(m-1-4)
		(m-1-4) edge 					(m-1-5)
		
		(m-2-1) edge 					(m-2-2)
		(m-2-2) edge 					(m-2-3)
		(m-2-3) edge 					(m-2-4)
		(m-2-4) edge 					(m-2-5)
		
		(m-1-3) edge					(m-2-3)
		(m-1-4) edge					(m-2-4)
		;
		\draw[double distance = .15em,font=\scriptsize]
		(m-2-2) -- 					(m-1-2)
		;
		\end{tikzpicture}
	\end{equation}
When $\Theta = \langle \varphi \rangle$, and $\varphi$ is pseudo-Anosov, then $\Gamma_\Theta = \pi_1(M)$, where 
	\[
		M_\varphi = S \times [0,1]/(x,1) \sim (\varphi(x),0)
	\]
is the mapping torus of $\varphi$. 
By Thurston's Geometrization Theorem \cite{morgansurvey,otalhaken,otalfibered,kapovichbook}, we have $M = \HH^3/\Gamma$ with $\Gamma_\Theta \cong \pi_1(M) \cong \Gamma < \Isom^+(\HH^3)$.
In particular, this allows us to view $\Gamma$ and its subgroups as both Kleinian groups and subgroups of $\Mod(\mathring{S})$.

When $S$ has punctures, there are other extensions of $\Theta$.  
Namely, replacing each puncture of $S$ with a cone point of some order, we consider $S$ as a hyperbolic orbifold.  
There is an inclusion $\Mod(S) \to \Out(\pi_1(S)^{\mathrm{orb}})$, and we build an extension $\Gamma_\Theta^{\, \mathrm{orb}}$ as the preimage in $\Aut(\pi_1(S)^{\mathrm{orb}})$
		\[
		\begin{tikzpicture}[>= to, line width = .075em, 
			baseline=(current bounding box.center)]
		\matrix (m) [matrix of math nodes, column sep=1.5em, row sep = 1.5em, 		text height=1.5ex, text depth=0.25ex]
		{
			1 & \pi_1(S)^{\mathrm{orb}} & \Gamma_\Theta^{\, \mathrm{orb}} & \Theta& 1\\
			1 & \pi_1(S)^{\mathrm{orb}} & \Aut(\pi_1(S)^{\mathrm{orb}})& \Out(\pi_1(S)^{\mathrm{orb}}) & 1 \\
		};
		\path[->,font=\scriptsize]
		(m-1-1) edge 					(m-1-2)
		(m-1-2) edge 					(m-1-3)
		(m-1-3) edge 					(m-1-4)
		(m-1-4) edge 					(m-1-5)
		
		(m-2-1) edge 					(m-2-2)
		(m-2-2) edge 					(m-2-3)
		(m-2-3) edge 					(m-2-4)
		(m-2-4) edge 					(m-2-5)
		
		(m-1-3) edge					(m-2-3)
		(m-1-4) edge					(m-2-4)
		;
		\draw[double distance = .15em,font=\scriptsize]
		(m-2-2) -- 					(m-1-2)
		;
		\end{tikzpicture}
	\]

\subsection{Convex cocompactness}
\label{sec:convexCocompactness}

Farb and Mosher defined convex cocompactness for $G < \Mod(S)$ in terms of the action of $\Mod(S)$  on Teichm\"uller space.  
An equivalent formulation mirroring Theorem \ref{T:Kleiniancccqi} is that $G$ is \define{convex cocompact} if and only if $G$ is finitely generated and the orbit map $G \to G \cdot v \subset \C(S)$ is a quasiisometric embedding (for any $v \in \C(S)$), see \cite{kentleiningershadows} or \cite{hamenstadt}.  
Analogous to the Kleinian group setting, if $G$ is convex cocompact, it is $\delta$--hyperbolic and purely pseudo-Anosov.

The following provides the link with the coarse geometry of surface group extensions and combines the results of Farb--Mosher \cite{FMcc} and Hamenst\"adt \cite{hamenstadt}.

\begin{theorem}[Farb--Hamenst\"adt--Mosher \cite{FMcc,hamenstadt}]\label{T:ccciffhyperbolic}
Suppose $S$ is a closed surface and $\Theta < \Mod(S)$ is a subgroup.  Then $\Theta$ is convex cocompact if and only if $\Gamma_\Theta$ is $\delta$--hyperbolic.
\qed
\end{theorem}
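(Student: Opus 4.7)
My plan is to prove both directions of this equivalence separately, using standard techniques from the coarse geometry of mapping class groups and from combinatorial hyperbolic group theory.

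For the forward direction, assume $\Theta$ is convex cocompact, so by the reformulation recalled in Section~\ref{sec:convexCocompactness} every infinite-order element of $\Theta$ is pseudo-Anosov and the orbit map $\Theta \to \C(S)$ is a quasiisometric embedding. I would realize $\Gamma_\Theta$ as the fundamental group of a ``hyperbolic surface bundle'' $E_\Theta$ over a base on which $\Theta$ acts cocompactly with hyperbolic quotient---obtained, for instance, from a quasi-convex $\Theta$-invariant subset of Teichm\"uller space or from the image of a $\Theta$-orbit in $\C(S)$---whose fibers are marked hyperbolic copies of $S$ glued together by the monodromy determined by $\Theta$. The Bestvina--Feighn combination theorem then delivers hyperbolicity of $E_\Theta$, and hence of $\Gamma_\Theta = \pi_1(E_\Theta)$, provided one verifies the ``annuli flaring'' condition: along every bi-infinite sequence of essential curves tracking a geodesic in the base, the hyperbolic lengths of these curves, measured in the varying fiber metrics, must grow exponentially in both directions from some balanced point. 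This flaring verification is the technical heart of the argument, and it follows by combining the pure pseudo-Anosov hypothesis with the Masur--Minsky distance formula, which converts the quasiisometric embedding $\Theta \hookrightarrow \C(S)$ into quantitative exponential control on the growth of fiber curve lengths.

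For the reverse direction, assume $\Gamma_\Theta$ is $\delta$-hyperbolic. I would first show $\Theta$ is purely pseudo-Anosov: if some infinite-order $\theta \in \Theta$ fixed the isotopy class of a simple closed curve $c$ on $S$, then the normality of $\pi_1(S)$ in $\Gamma_\Theta$ (via the Birman exact sequence~\eqref{eqn:BES}) together with commuting relations between $\theta$ and $c \in \pi_1(S)$ would produce a rank-two abelian subgroup in $\Gamma_\Theta$, contradicting $\delta$-hyperbolicity. To upgrade this to convex cocompactness, I would show the orbit map $\Theta \to \C(S)$ is a quasiisometric embedding by exploiting the natural $\Gamma_\Theta$-equivariant map $\Gamma_\Theta \to \C(\mathring S)$, pushing it down via the forgetful projection $\C(\mathring S) \to \C(S)$, and using $\delta$-thinness of triangles in $\Gamma_\Theta$ along with the hyperbolicity of $\C(S)$ (Theorem~\ref{T:MMhyp}) and the stability of quasigeodesics (Theorem~\ref{T:QIstability}) to bound $\C(S)$-distance from below in terms of word length in $\Theta$.

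The principal obstacle, without question, is verifying the flaring condition in the forward direction. The required exponential growth of curve lengths along hallways in $E_\Theta$ is not a formal consequence of the quasiisometric embedding $\Theta \hookrightarrow \C(S)$, and establishing it uniformly across all hallways requires genuine input from the Masur--Minsky theory of subsurface projections together with the no-conjugate-power structure forced by pure pseudo-Anosovness. The reverse direction is comparatively softer once pure pseudo-Anosovness has been extracted from hyperbolicity, though it still depends on a careful tracking of how distances in $\Gamma_\Theta$ descend through the inclusion of short exact sequences~\eqref{E:ses inclusions} to distances in $\C(S)$.
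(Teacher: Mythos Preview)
The paper does not prove this theorem: it is quoted from the literature (Farb--Mosher \cite{FMcc} and Hamenst\"adt \cite{hamenstadt}) and marked with a terminal \qed. There is therefore no ``paper's own proof'' to compare against; the result is used as a black box. Your task here was simply to cite it.

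That said, a few remarks on your sketch. The forward direction is broadly in line with the actual arguments: one realizes $\Gamma_\Theta$ as the fundamental group of a metric bundle over a space on which $\Theta$ acts geometrically, and applies the Bestvina--Feighn combination theorem. You are right that verifying the flaring condition is the crux, and that this is where convex cocompactness enters.

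Your reverse direction is shakier. The purely pseudo-Anosov step is essentially right, though your phrasing ``commuting relations between $\theta$ and $c \in \pi_1(S)$'' elides a subtlety: a simple closed curve is only a conjugacy class in $\pi_1(S)$, and it is not a lift of $\theta$ itself but a $\pi_1(S)$--translate of such a lift that commutes with a chosen representative of $c$. More seriously, your plan for upgrading purely pseudo-Anosov to convex cocompact---pushing an orbit map $\Gamma_\Theta \to \C(\mathring S)$ through a ``forgetful projection'' $\C(\mathring S) \to \C(S)$ and invoking thin triangles---does not constitute an argument. There is no reason the composite should be a quasiisometric embedding onto its image, and no mechanism in your outline forces the required lower bound on $d_{\C(S)}$. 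Farb and Mosher's actual argument works in Teichm\"uller space, not the curve complex, and uses the structure of singular solv-metrics on the bundle to show that the $\Theta$--orbit in $\T(S)$ is quasiconvex; this is not something one can shortcut in the manner you suggest.
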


Although we will not need it, this theorem is also true when $S$ has punctures, provided we replace $\Gamma_\Theta$ with $\Gamma_\Theta^{\, \mathrm{orb}}$.  
More recently, Mj--Sardar \cite{Mj.Sardar.2012} proved that $\Theta$ is convex cocompact if and only if $\Gamma_\Theta$ is hyperbolic relative to the peripheral subgroups.

\section{Convex cocompactness as a Kleinian group.}

We now embark on the proof of Theorem \ref{T:main}, letting $\Gamma$ be the fundamental group of a hyperbolic $3$--manifold $M$ fibering over the circle with fiber $S$, considered as a subgroup of $\Mod(\mathring{S})$.
Let $G$ be a finitely generated purely pseudo-Anosov subgroup of $\Gamma$.

\begin{lemma} \label{L:cccKleinian}  $G$ is a convex cocompact Kleinian group.
\end{lemma}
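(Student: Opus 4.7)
The plan is to apply the Tameness--Covering dichotomy of Theorem~\ref{T:tame+covering}, reducing the lemma to showing (i) $G$ contains no parabolic elements, and (ii) no index-at-most-$2$ subgroup of $G$ is a fiber subgroup of any finite-index subgroup of $\Gamma$. For (i), if $M$ is closed there is nothing to check. If $M$ has cusps then any parabolic $g\in\Gamma$ lies in a peripheral $\ZZ^2\leq\Gamma$, which injects as a $\ZZ^2$ into $\Mod(\mathring S)$ via the embedding $\Gamma\hookrightarrow\Mod(\mathring S)$. Since centralizers of pseudo-Anosov elements of $\Mod(\mathring S)$ are virtually cyclic, $g$ cannot be pseudo-Anosov, and so $g\notin G$.

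For (ii) I argue by contradiction: suppose some $\widetilde G\leq G$ with $[G:\widetilde G]\leq 2$ is a fiber subgroup of a finite-index $\widetilde\Gamma\leq\Gamma$. Then $\widetilde G\cong\pi_1(\widetilde F)$ for the fiber surface $\widetilde F$ of $\HH^3/\widetilde\Gamma$, and $\widetilde G$ is itself purely pseudo-Anosov in $\Mod(\mathring S)$. The key object is $K=\widetilde G\cap\pi_1(S)$, which is normal in $\widetilde G$ with quotient $\widetilde G/K$ embedding in $\Gamma/\pi_1(S)\cong\ZZ$, hence trivial or infinite cyclic.

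If $\widetilde G\leq\pi_1(S)$ (i.e.\ $K=\widetilde G$), two sub-cases arise. When $\widetilde G$ has finite index in $\pi_1(S)$, some power of any simple closed curve $c\subset S$ lies in $\widetilde G$; this power is not filling in $\pi_1(S)$, contradicting Kra's Theorem~\ref{T:kra}. When $\widetilde G$ has infinite index in $\pi_1(S)$, Scott--Swarup~\cite{Scott.Swarup.1990} applied to the cover $\HH^3/\widetilde\Gamma$ (which fibers over $S^1$ with fiber subgroup $\pi_1(S)\cap\widetilde\Gamma$) forces $\widetilde G$ to be convex cocompact as a Kleinian group, contradicting that $\widetilde G$ is a fiber (whose limit set is $S^2_\infty$).

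The hard part is the remaining case $\widetilde G\not\leq\pi_1(S)$, where $\widetilde G/K\cong\ZZ$. Here $K$ is an infinitely generated subgroup of $\pi_1(S)$---it is the kernel of a nontrivial homomorphism from the surface group $\widetilde G$ onto $\ZZ$---and every nontrivial element of $K$ is pseudo-Anosov in $\Mod(\mathring S)$, hence filling in $\pi_1(S)$ by Kra. Moreover, $K$ is preserved by conjugation by any $\Psi\in\widetilde G\setminus K$, which induces a nonzero power of the monodromy $\varphi\in\Out(\pi_1(S))$. My plan to close this case is to exploit the interplay between the two distinct fibrations of $\HH^3/\widetilde\Gamma$ (the one with fiber subgroup $\widetilde G$ and the one with fiber subgroup $\pi_1(S)\cap\widetilde\Gamma$) to locate a simple closed curve on $\widetilde F$ whose class in $K\leq\pi_1(S)$ is represented by a non-filling loop in $S$, producing the desired violation of Kra. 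This geometric comparison of the two fibrations is where I expect the main obstacle, and it is precisely the point at which the additional technical input beyond the two-dimensional argument of \cite{kentleiningerschleimer} is required.
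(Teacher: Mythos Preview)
Your reduction via Theorem~\ref{T:tame+covering} and your treatment of parabolics are correct and match the paper exactly.

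For part~(ii), however, your argument is incomplete: in the case $\widetilde G\not\leq\pi_1(S)$ you only describe a \emph{plan} (``exploit the interplay between the two distinct fibrations to locate a simple closed curve on $\widetilde F$ whose class in $K$ is non-filling in $S$''), but you do not carry it out. There is no concrete mechanism offered for producing such a curve, and it is not clear that a comparison of the two fibrations by itself yields one. So as written this is a genuine gap.

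The paper's proof sidesteps your entire case analysis with a single algebraic observation. Since $\widetilde G\lhd\widetilde\Gamma$ with $\widetilde\Gamma/\widetilde G\cong\ZZ$, and likewise $\widetilde\Gamma/(\pi_1(S)\cap\widetilde\Gamma)\cong\ZZ$, both quotients are abelian, so
\[
[\widetilde\Gamma,\widetilde\Gamma]\ \leq\ \widetilde G\,\cap\,\pi_1(S)\,\cap\,\widetilde\Gamma.
\]
Now pick any proper essential subsurface $\Sigma\subset S$, lift it to a component $\Sigma_0$ of its preimage in the finite cover $\widetilde S\to S$ corresponding to $\pi_1(S)\cap\widetilde\Gamma$, and take any nontrivial commutator in $\pi_1(\Sigma_0)$. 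This element has infinite order, lies in $[\widetilde\Gamma,\widetilde\Gamma]$ and hence in $\widetilde G\leq G$, but is carried by the proper subsurface $\Sigma\subset S$ and is therefore non-filling; by Kra it is not pseudo-Anosov, contradicting the hypothesis on $G$. This handles all cases at once---there is no need to separate $\widetilde G\leq\pi_1(S)$ from $\widetilde G\not\leq\pi_1(S)$, and the argument is a few lines long.
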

\begin{proof}  By replacing $G$ with a subgroup of index at most two (which does not change the conclusion), Theorem \ref{T:tame+covering} implies that either $G$ is convex cocompact, or $G$ contains a parabolic, or $G$ is a fiber subgroup of a finite index subgroup $\widetilde \Gamma < \Gamma$.

We begin by observing that any element of $\Gamma$ which is pseudo-Anosov in $\Mod(\mathring{S})$ must be hyperbolic.  
For if not, it would be parabolic and hence contained in a maximal parabolic subgroup which is isomorphic to $\ZZ^2$.  
Since the centralizer of a pseudo-Anosov element is virtually cyclic (contains a cyclic subgroup of finite index), it cannot be contained in any subgroup isomorphic to $\ZZ^2$, and therefore cannot be parabolic in $\Gamma$.   

Therefore we must show that $G$ is not a fiber group of some $\widetilde \Gamma < \Gamma$.  
We assume that it is and derive a contradiction.

To this end, we let $\widetilde M \to M$ denote a finite cover that fibers over $S^1$ with a fiber subgroup $G$, and let $\widetilde \Gamma < \Gamma$ denote the corresponding subgroup of the fundamental group.  
Then $G$ is a normal subgroup of $\widetilde \Gamma$ with $\widetilde \Gamma/G \cong \ZZ$.  
Since $\widetilde \Gamma/(\widetilde \Gamma \cap \pi_1(S)) \cong \ZZ$, it follows that
	\[ 
		[\widetilde \Gamma,\widetilde \Gamma] < G \cap \pi_1(S) \cap \widetilde \Gamma
	\]
where $[\widetilde \Gamma,\widetilde \Gamma]$ is the commutator subgroup.

Now consider any strict essential subsurface $\Sigma \subset S$, and let $\Sigma_0$ be any component of the preimage of $\Sigma$ in the (finite--sheeted) covering $\widetilde S \to S$, corresponding to $\pi_1(S) \cap \widetilde \Gamma < \pi_1(S)$.  
Any nontrivial commutator in $\pi_1(\Sigma_0) < \pi_1(S)$ has infinite order (since all groups in question are torsion--free) and lies in $[\widetilde \Gamma,\widetilde \Gamma]$, hence also in $G$.  
On the other hand, every element of $\pi_1(\Sigma_0)$ projects to a loop in $\Sigma$, and hence a nonfilling loop in $S$.  
It follows that $G$ contains an infinite order element which is not pseudo-Anosov.  
This contradicts the assumption that $G$ is purely pseudo-Anosov and completes the proof.
\end{proof}

\section{Metrics and covers.}

The $3$--manifold $M$ is a quotient of $\HH^3$ by $\Gamma$, and so has a quotient hyperbolic metric we denote by $d$.  
We will want to consider an auxiliary metric $\hat d$ constructed as follows.  
The manifold $M$ is the mapping torus of $\varphi \co S \to S$, and we choose a suspension flow $\varphi_t$.  
That is, $\varphi_t$ is a flow transverse to the fibers such that $\varphi_t$ sends fibers to fibers for all $t$, and $\varphi_1$ is the first return map on each fiber.  
We choose a Riemannian metric so that the induced metric on each fiber is a hyperbolic metric, and so that $\varphi_t$ is a flow along flow lines that are orthogonal to the fibers. 
We let $\hat d$ denote the metric induced by this Riemannian metric.  
We also assume that the Riemannian metrics defining $\hat d$ and $d$ agree on some horoball neighborhoods of the cusps when $M$ is noncompact.   
This is possible since the fibration can be chosen so that in the hyperbolic metric $d$, the fibers intersect the cusps in totally geodesic surfaces, and so that the integral curves of the suspension flow are horocycles orthogonal to the fibers.

By compactness of the complement of the horoball cusp neighborhoods, the identity $(M,d) \to (M,\hat d)$ is bilipschitz.    
It follows that the same is true for any cover of $M$ if we pull back the metrics $d$ and $\hat d$.

We let $M_S \to M$ be the cover corresponding to $\pi_1(S)$, which fibers over $\RR$ by lifting the fibration $M \to S^1$.  
We record this, together with the homeomorphisms of the universal cover $\HH^3 \cong \HH^2 \times \RR$ and $M_S \cong S \times \RR$ in the following commutative diagram:
 	\[
		\begin{tikzpicture}[>= to, line width = .075em, 
			baseline=(current bounding box.center)]
		\matrix (m) [matrix of math nodes, column sep=1.5em, row sep = 1.5em, 		text height=1.5ex, text depth=0.25ex]
		{
			\HH^3  & M_S & M \\
			\HH^2 \times \RR & S \times \RR & \\
			\RR & \RR & S^1 \\
		};
		\path[->,font=\scriptsize]
		(m-1-1) edge (m-1-2)
		(m-1-2) edge (m-1-3)
		(m-1-3) edge (m-3-3)
		
		(m-2-1) edge (m-2-2)
		(m-2-1) edge (m-3-1)
		(m-2-2) edge (m-3-2)
		
		(m-3-2) edge (m-3-3)
		;
		\draw[double distance = .15em,font=\scriptsize]
		(m-1-1) -- 	(m-2-1)
		(m-1-2) --	(m-2-2)
		(m-3-1) --	(m-3-2)
		;
		\end{tikzpicture}
	\]
We pull the metric $\hat d$ on $M \to S^1$ back to $\HH^3$ and $M_S$.
We let $\HH^2_t = \HH^2 \times \{t\}$ and $S_t = S \times \{t\}$ denote the fibers of the fibrations $\HH^3 \to \RR$ and $M_S \to \RR$, respectively.
We equip these fibers with their path metrics induced by $\hat d$.

Let $\eta\colon \Gamma\to \ZZ$ denote the homomorphism induced by the fibration $M\to S^1$. 
We assume that in addition to being purely pseudo-Anosov and finitely generated,  $G < \Gamma$ is \textbf{not} contained in $\pi_1(S)$:  if $G < \pi_1(S)$ then we can apply \cite{kentleiningerschleimer}, and $G$ is convex cocompact in $\Mod(\mathring{S})$.  
Let $G_0 = G \cap \pi_1(S)$, which is the kernel of $\eta\vert_G\colon G\to \ZZ$.
	\[
		\begin{tikzpicture}[>= to, line width = .075em, 
			baseline=(current bounding box.center)]
		\matrix (m) [matrix of math nodes, column sep=1.5em, row sep = 1em, 		text height=1.5ex, text depth=0.25ex]
		{
			1 & G_0  & G & \ZZ & 1 \\
		};
		\path[->,font=\scriptsize]
		(m-1-1) edge 					(m-1-2)
		(m-1-2) edge 					(m-1-3)
		(m-1-3) edge 	node[auto]{$\eta\vert_G$}	(m-1-4)
		(m-1-4) edge 					(m-1-5)
		;
		\end{tikzpicture}
	\]
By replacing $\Gamma$ with a finite index subgroup (namely the preimage in $\Gamma$ of the image of $G$ in $\ZZ$), we can assume without loss of generality that $G \to \ZZ$ is surjective.  
We also let $g \in G$ be an element that maps to $1$ in $\ZZ$.  If $G_0$ is trivial, then $G$ is cyclic and we're done, so we assume $G_0$ is nontrivial.

We consider the covers $M_{G_0} \to M_G \to M$ corresponding to $G_0 < G < \Gamma$, as well as the cover $S_{G_0} \to S$ corresponding to $G_0 < \pi_1(S)$.
We can add this to the previous diagram to get
 	\[
		\begin{tikzpicture}[>= to, line width = .075em, 
			baseline=(current bounding box.center)]
		\matrix (m) [matrix of math nodes, column sep=1.5em, row sep = 1.5em, 		text height=1.5ex, text depth=0.25ex]
		{
			&  
			& M_G 	
			&	
			&	
				\\
			\HH^3  	
			& M_{G_0} 	
			& M_S 		
			& M 		
				\\
			\HH^2 \times \RR 	
			& S_{G_0} \times \RR 	
			& S \times \RR	
			&	
				\\
			\RR 		
			& \RR
			& \RR 	
			& S^1 
				\\
		};
		\path[->,font=\scriptsize]
		(m-1-3) edge	(m-2-4)
		
		(m-2-1) edge	(m-2-2)
		(m-2-2) edge	(m-1-3)
		(m-2-2) edge	(m-2-3)
		(m-2-3) edge	(m-2-4)
		(m-2-4) edge	(m-4-4)
		
		(m-3-1) edge	(m-3-2)
		(m-3-1) edge	(m-4-1)
		(m-3-2) edge	(m-3-3)
		(m-3-2) edge	(m-4-2)
		(m-3-3) edge	(m-4-3)
		;
		\draw[double distance = .15em,font=\scriptsize]
		(m-2-1) --		(m-3-1)
		(m-2-2) --		(m-3-2)
		(m-2-3) --		(m-3-3)
		
		(m-4-1) -- 		(m-4-2)
		(m-4-2) --		(m-4-3)
		(m-4-3) --		(m-4-4)
		;
		\end{tikzpicture}
	\]

Keeping with the same notation, we write $S_{G_0,t} = S_{G_0} \times \{t\}$ with its induced hyperbolic metric.

\bigskip

\noindent
\textbf{Notation.}  The metrics $d$ and $\hat d$ both pull back to metrics on all the covers, and we denote these by the same names $d$ and $\hat d$.  

For any of the spaces that fiber over $\RR$, we write $d_t$ for the path metric on the fiber over $t\in \RR$ induced by $\hat d$. 
By construction, $d_t$ is a hyperbolic metric on each fiber.
For metric--dependent constructions we will add a prefix to the name to signify what metric is being used in the construction.  
For example, we will refer to $d_t$--geodesics, $d_t$--diameter, $d_t$--Hausdorff distance, \textit{et cetera}.  
If there is no prefix this signifies that the metric $d$ is being used, though we will sometimes include the $d$ for clarification.

\section{Simplex hulls}

For any simplex $u \subset \C(\mathring{S})$, we consider the stabilizer of $u$ in $\pi_1(S) < \Mod(\mathring{S})$.  
This is the fundamental group of the subsurface determined by $u$, see \cite{kentleiningerschleimer} for a detailed discussion.  
Since the stabilizer $\pi_1(S)_u$ acts on $\HH^3$ as well as each $\HH^2_t$ for every $t$, we can consider the convex hull of its limit set in any one of these spaces.  
We write $\Hull(u) \subset \HH^3$ for the $d$--convex hull in $\HH^3$ and $\Hull_t(u) \subset \HH^2_t$ for the $d_t$--convex hull in $\HH^2_t$.
By the main theorem of \cite{Scott.Swarup.1990}, the limit set of $\Hull_{t}(u)$ in $\partial_\infty\HH^2_t$ maps homeomorphically onto the limit set of $\Hull(u)$ in $\partial_\infty\HH^3$.

Given any $\gamma \in \Gamma$ we have $\gamma(\Hull(u)) = \Hull(\gamma \cdot u)$, where $\gamma$ acts on $\C(\mathring{S})$ via the inclusion $\Gamma < \Mod(\mathring{S})$ from \eqref{E:ses inclusions}. Recall also that $\eta\colon \Gamma\to \ZZ$ is the homomorphism of fundamental groups induced by the fibration $M=\HH^3/\Gamma \to S^1$.

\begin{proposition} \label{P:qi subsurfaces}
There exist $K,C,R > 0$ with the following property.  
For any simplex $u \subset \C(\mathring{S})$ there exists a $t_u  \in \ZZ$ satisfying $t_{\gamma \cdot u} = t_u + \eta(\gamma)$ for all $\gamma \in \Gamma$, and such that the following holds.
\begin{enumerate}
\item The inclusion $\Hull_{t_u}(u) \to \HH^3$ is a $(K,C)$--quasiisometric embedding.
\item For any $z,w \in \Hull(u)$, there exists $z',w' \in \Hull_{t_u}(u)$ such that
	\[ 
		d(z,z'), d(w,w') \leq R
	\]
and the $d_{t_u}$--geodesic $[z',w']_{t_u} \subset \Hull_{t_u}(u)$ and the $d$--geodesic $[z,w] \subset \Hull(u)$ have $d$--Hausdorff distance at most $R$.
\end{enumerate}
\end{proposition}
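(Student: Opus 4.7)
The plan is to derive the proposition from a uniform width bound for the convex hulls of subsurface subgroups in the fibered cover, which is essentially the content of \cite{kentleiningerwidth}. For any simplex $u \subset \C(\mathring{S})$, the stabilizer $\pi_1(S)_u$ is the fundamental group of the subsurface $Y_u$ filled by $u$; by Scott--Swarup \cite{Scott.Swarup.1990} this subgroup is geometrically finite as a Kleinian subgroup of $\Gamma$, and as noted in the setup its limit set in $\partial_\infty\HH^3$ is canonically identified with its limit set in $\partial_\infty\HH^2_t$ for each fiber.

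First I would invoke the width theorem of \cite{kentleiningerwidth} to obtain a uniform constant $W$, depending only on the pseudo-Anosov monodromy $\varphi$, such that the image of $\Hull(u)$ under the projection $\HH^3 \cong \HH^2 \times \RR \to \RR$ has diameter at most $W$ for every simplex $u$. The time $t_u \in \ZZ$ is then defined to be the greatest integer at most the midpoint of this projection interval. Since the projection to the $\RR$--factor is $\Gamma$--equivariant via $\eta$ and $\Hull(\gamma \cdot u) = \gamma \cdot \Hull(u)$, the identity $t_{\gamma \cdot u} = t_u + \eta(\gamma)$ follows automatically from this construction.

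To establish (1), I would observe that $\Hull(u)$ lies inside the slab $\HH^2 \times [t_u - W, t_u + W]$ and use the suspension flow to project $\Hull(u)$ onto the fiber $\HH^2_{t_u}$; this projection moves each point by $\hat d$--distance at most $W$, hence by $d$--distance at most some uniform $R_0$ by the bilipschitz comparison between $\hat d$ and $d$. Because the limit sets of $\pi_1(S)_u$ in $\partial_\infty\HH^3$ and $\partial_\infty\HH^2_{t_u}$ agree by Scott--Swarup, the projection sends $\Hull(u)$ to within uniformly bounded Hausdorff distance of $\Hull_{t_u}(u)$. Combining this with the fact that $\Hull(u) \hookrightarrow \HH^3$ is an isometric embedding (by $d$--convexity of $\Hull(u)$) and that on the width-$W$ slab the metrics $d_{t_u}$ and $d$ are uniformly bilipschitz in the fiber direction, one obtains the desired $(K,C)$--quasiisometric embedding $\Hull_{t_u}(u) \to \HH^3$ with constants depending only on $W$ and the $\hat d$--to--$d$ bilipschitz constant.

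For (2), given $z, w \in \Hull(u)$, let $z', w' \in \Hull_{t_u}(u)$ denote their flow projections, so that $d(z, z'), d(w, w') \leq R_0$ by the width bound. The $d_{t_u}$--geodesic $[z',w']_{t_u}$ is a uniform $(K,C)$--quasigeodesic in $\HH^3$ by (1), and its endpoints lie within $R_0$ of the endpoints of the $d$--geodesic $[z, w]$. Stability of quasigeodesics in the $\delta$--hyperbolic space $\HH^3$ (Theorem \ref{T:QIstability}) then yields a uniform Hausdorff bound. The main obstacle throughout is securing the uniformity of $W$ over all simplices $u$ regardless of the topological type of $Y_u$; this is precisely the content of the width theorem in \cite{kentleiningerwidth}, and with it in hand the remaining steps reduce to standard hyperbolic comparisons together with quasigeodesic stability.
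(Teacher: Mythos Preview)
Your proposal is correct and follows the same overall strategy as the paper: invoke the width bound (Proposition~\ref{P:widthmain}), compare $\Hull_{t_u}(u)$ and $\Hull(u)$ inside a bounded-width slab where the fiber metric $d_{t_u}$ and the induced path metric $d^\times_{t_u}$ are uniformly bilipschitz, and appeal to quasigeodesic stability. Your direct definition of $t_u$ as the integer part of the midpoint of the projection interval is cleaner than the paper's transversal construction and yields the equivariance $t_{\gamma\cdot u}=t_u+\eta(\gamma)$ automatically. The paper's detailed proof differs from yours in one technical respect: rather than arguing that $\Hull(u)$ and $\Hull_{t_u}(u)$ are Hausdorff-close as quasiconvex sets sharing a limit set in the slab, the paper introduces Lemma~\ref{L:segments almost extend} to replace arbitrary segments by segments lying on biinfinite geodesics, and then compares these directly to $d$--geodesics with the same endpoints at infinity (which necessarily lie in $\Hull(u)$, hence in the slab). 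Both routes work; yours matches the paper's own sketch, and your derivation of~(2) straight from~(1) via Theorem~\ref{T:QIstability} is more streamlined than the paper's second application of Lemma~\ref{L:segments almost extend}. One small imprecision to fix: the suspension-flow projection of a point of $\Hull(u)$ lands in $\HH^2_{t_u}$ but need not lie in $\Hull_{t_u}(u)$ itself, so you should take $z',w'$ to be nearby points of $\Hull_{t_u}(u)$, which exist by the Hausdorff closeness you have already argued.
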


One of the key ingredients in the proof of this proposition is the following result from \cite{kentleiningerwidth}, which shows that the convex hull $\Hull(u)\subset\HH^3=\HH^2\times \RR$ is not too wide.

\begin{proposition}[Kent--Leininger \cite{kentleiningerwidth}]\label{P:widthmain}
There exists $W > 0$ such that for any simplex $u \subset \C(\mathring{S})$, the image of $\Hull(u)$ in $\RR$ under the projection $\HH^3 = \HH^2 \times \RR \to \RR$ onto the second coordinate has diameter at most $W$. 
\qed
\end{proposition}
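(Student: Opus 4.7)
The plan is to leverage the width bound of Proposition \ref{P:widthmain} together with the Scott--Swarup quasiconvexity theorem and two applications of the \v{S}varc--Milnor lemma. By Proposition \ref{P:widthmain}, the projection $I_u \subset \RR$ of $\Hull(u)$ has diameter at most $W$. Since each $\gamma \in \Gamma$ acts on $\HH^3 \cong \HH^2 \times \RR$ preserving the fibration with action on the $\RR$-factor given by translation by $\eta(\gamma)$, and since $\gamma \cdot \Hull(u) = \Hull(\gamma u)$, we have $I_{\gamma u} = I_u + \eta(\gamma)$. Define $t_u := \lfloor \inf I_u \rfloor \in \ZZ$; this is an integer within distance $1$ of $I_u$, satisfying $t_{\gamma u} = t_u + \eta(\gamma)$, and placing $\Hull(u)$ in a slab of $\hat d$-width at most $W+1$ about the fiber $\HH^2_{t_u}$.

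For part (1), recall that $\pi_1(S)_u$ is the fundamental group of a proper essential subsurface of $\mathring S$, hence finitely generated and of infinite index in the fiber group $\pi_1(S) < \Gamma$. By Scott--Swarup \cite{Scott.Swarup.1990} it is convex cocompact as a Kleinian subgroup of $\Gamma$, so it acts $d$--cocompactly on $\Hull(u)$, and simultaneously, as a Fuchsian group acting on $\HH^2_{t_u}$, it acts $d_{t_u}$--cocompactly on $\Hull_{t_u}(u)$. Fix a basepoint $p \in \Hull_{t_u}(u)$; by the slab bound, $p$ lies within uniformly bounded $d$-distance of $\Hull(u)$. Applying the \v{S}varc--Milnor lemma (Theorem \ref{T:svarc-milnor}) twice produces quasiisometries from $\pi_1(S)_u$, equipped with a fixed word metric, to the common orbit $\pi_1(S)_u \cdot p$, once measured in $d_{t_u}$ and once in $d$. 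Composing shows the two metrics are quasiisometric on this orbit, and the $d_{t_u}$-cocompactness of the orbit in $\Hull_{t_u}(u)$ extends this to the desired $(K,C)$--quasiisometric embedding $\Hull_{t_u}(u) \hookrightarrow (\HH^3,d)$.

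Part (2) is then an application of the stability of quasigeodesics. Given $z, w \in \Hull(u)$, cocompactness of $\pi_1(S)_u$ on $\Hull(u)$ combined with $\pi_1(S)_u$-invariance of $\Hull_{t_u}(u)$ lets us translate $p$ by appropriate group elements to obtain points $z', w' \in \Hull_{t_u}(u)$ at uniform $d$-distance from $z, w$. By part (1) the $d_{t_u}$-geodesic $[z',w']_{t_u}$ is a uniform quasigeodesic in the $\delta$-hyperbolic space $(\HH^3, d)$ whose $d$-endpoints are close to those of the honest $d$-geodesic $[z,w]$, so Theorem \ref{T:QIstability} bounds their $d$-Hausdorff distance uniformly, yielding the constant $R$.

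The main obstacle is arranging $K$, $C$, and $R$ to be uniform across all simplices $u$. This reduces, via $\Gamma$-equivariance of the entire construction, to checking uniformity across representatives of the finitely many topological types of subgroups $\pi_1(S)_u$. The essential geometric inputs are that the fibers $\HH^2_n$ at integer times are pairwise isometric via the monodromy, so the Fuchsian data of $\pi_1(S)_u$ on $\HH^2_{t_u}$ is controlled independently of $t_u$; and that the width bound controls the $d$-distance from $p$ to $\Hull(u)$ simplex-independently, allowing the \v{S}varc--Milnor and cocompactness constants in both metrics to be chosen uniformly.
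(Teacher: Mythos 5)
There is a fundamental problem: your proposal does not prove the statement in question at all. The statement is Proposition \ref{P:widthmain} itself --- the assertion that the projection of $\Hull(u)$ to the $\RR$--factor of $\HH^3 \cong \HH^2 \times \RR$ has diameter bounded by a constant $W$ independent of the simplex $u$. Your very first step is ``By Proposition \ref{P:widthmain}, the projection $I_u \subset \RR$ of $\Hull(u)$ has diameter at most $W$,'' so the argument is circular with respect to the statement you were asked to establish. What you go on to sketch is essentially Proposition \ref{P:qi subsurfaces} (the choice of $t_u$, the $(K,C)$--quasiisometric embedding $\Hull_{t_u}(u) \to \HH^3$, and the Hausdorff--closeness of geodesics via stability of quasigeodesics), which in the paper is a separate result that takes the width bound as input. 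In the paper, Proposition \ref{P:widthmain} is not proven at all; it is quoted from \cite{kentleiningerwidth}, and its proof requires genuinely different ingredients: one must control how the convex hull of the limit set of the infinite--index subsurface group $\pi_1(S)_u$ sits relative to the suspension flow and the fibration $\HH^3 \to \RR$, uniformly over all $u$. Nothing in your proposal bounds that projection; Scott--Swarup convex cocompactness of $\pi_1(S)_u$ and the \v{S}varc--Milnor lemma give cocompactness of the action on $\Hull(u)$ but by themselves say nothing about the width of $\Hull(u)$ in the flow direction (a priori the compact fundamental domain could be long and thin in the $\RR$--direction, with the bound depending on $u$).

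Separately, even read as a proof of Proposition \ref{P:qi subsurfaces}, the uniformity step is shakier than you suggest: reducing to ``finitely many topological types of subgroups $\pi_1(S)_u$'' does not control the geometry, since subgroups of the same topological type sit in $\HH^2_{t_u}$ in infinitely many geometrically distinct ways and the \v{S}varc--Milnor constants depend on the actual action, not just the isomorphism type. The paper instead gets uniform constants from the bounded--width slab $\HH^2 \times [t_u - W, t_u + W]$: the flow gives a uniformly bilipschitz comparison between $d_{t_u}$ and the slab's path metric, convexity of $\Hull(u)$ identifies $d$ with the slab metric on it, and stability of quasigeodesics in the slab does the rest. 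So the correct dependency structure is: \cite{kentleiningerwidth} proves the width bound, and that bound drives the uniformity in Proposition \ref{P:qi subsurfaces}; your write--up inverts this and leaves the actual statement unproved.
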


Given this Proposition, we now sketch the proof of Proposition  \ref{P:qi subsurfaces} before proceeding to the details.  

\begin{proof}[Sketch of Proposition \ref{P:qi subsurfaces}] We will choose $t_u$ uniformly close to the image of $\Hull(u)$ in $\RR$ under the projection $\HH^3 \to \RR$.  According to Proposition \ref{P:widthmain}, the image of $\Hull(u)$ in $\RR$ is contained in an interval $I_u$ centered on $t_u$ of uniformly bounded diameter.    Since $I_u$ has bounded diameter, it follows that the inclusion $\HH^2_{t_u} \to \HH^2 \times I_u$ is uniformly bilipschitz provided we give $\HH^2_{t_u}$ the hyperbolic metric $d_{t_u}$ and $\HH^2 \times I_u$ the induced path metric from $\hat d$ or from the hyperbolic metric $d$.  Since $\Hull_{t_u}(u) \subset \HH^2_{t_u}$ is convex, the same is true for the inclusion $\Hull_{t_u}(u) \to \HH^2 \times I_u$, and in particular, $\Hull_{t_u}(u)$ is uniformly quasi-convex in the Gromov hyperbolic space $\HH^2 \times I_u$.

Next we observe that $\Hull(u) \subset \HH^2 \times I_u$.  
Since $\Hull(u)$ is convex, the path metric on $\Hull(u)$ induced by $d$ is precisely the restriction of the hyperbolic metric $d$.  Since $\Hull(u)$ and $\Hull_{t_u}(u)$ have the same limit set, and both are quasi-convex, they are uniformly close to each other inside $\HH^2 \times I_u$.  From this and the fact that both spaces are uniformly quasi-isometrically embedded in $\HH^2 \times I_u$, we see that distances between points in $\Hull_{t_u}(u)$ are uniformly comparable to distances in $\Hull(u)$.  On the other hand, distances in $\Hull(u)$ are precisely distances in $\HH^3$, as required.
\end{proof}

With this sketch in mind, we proceed to the actual proof of Proposition \ref{P:qi subsurfaces}.  It turns out that comparing distances in $\Hull_{t_u}(u)$ and $\Hull(u)$ as just described is a bit messy.  The following Lemma allows us to restrict attention to the points which lie on biinfinite geodesics contained in the respective hulls.

\begin{lemma} \label{L:segments almost extend}
There exists $\delta > 0$ such that for $n = 2$ or $3$ the following holds.  
Let $Z$ be a closed subset of $\partial_\infty\HH^n$.
Then any geodesic segment $[z,w]$ in the convex hull $\Hull(Z)$ in $\HH^n$ has Hausdorff distance at most $\delta$ from a geodesic segment $[z_0,w_0]$ which is contained in a biinfinite geodesic in $\Hull(Z)$.
\end{lemma}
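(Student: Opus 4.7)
The plan is to extend $[z,w]$ to a biinfinite geodesic $\ell$ in $\HH^n$ with ideal endpoints $\xi^\pm \in \partial_\infty\HH^n$, and to examine the maximal sub-segment $[z',w'] \supseteq [z,w]$ of $\ell$ contained in $\Hull(Z)$. Since $\overline{\Hull(Z)}\cap \partial_\infty\HH^n = Z$ (the minimality of the convex hull implies nothing more is added at infinity), if $[z',w'] = \ell$ then $\xi^\pm \in Z$, so $[z,w]$ is itself a segment of a biinfinite geodesic in $\Hull(Z)$, and the lemma holds with $\delta = 0$. The content of the lemma is therefore to handle the case where at least one of the exit points $z'$ or $w'$ lies in $\partial\Hull(Z) \cap \HH^n$.

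The main technical ingredient is the observation that, for both $n=2$ and $n=3$, every point of $\partial\Hull(Z) \cap \HH^n$ lies on a biinfinite geodesic contained in $\Hull(Z)$. For $n=2$ this is immediate: $\partial\Hull(Z) \cap \HH^2$ decomposes as a disjoint union of biinfinite geodesics, one for each complementary arc of $Z$ in $\partial_\infty\HH^2$, each with endpoints in $Z$. For $n=3$ this is the classical fact that $\partial\Hull(Z)$ carries a natural pleated-surface structure (in the sense of Thurston), whose leaves are biinfinite geodesics with endpoints in $Z$.

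With this in hand, I would apply the observation at whichever of $z', w'$ lies in $\HH^n$ to produce boundary biinfinite geodesics $\mu^\pm \subset \Hull(Z)$ through the respective exit points (or use $\xi^\pm \in Z$ directly when the corresponding exit is at infinity). The resulting (at most) four ideal points span an ideal polytope $P \subset \Hull(Z)$---an ideal triangle, quadrilateral, tetrahedron, or degeneration thereof---all of whose sides and diagonals are biinfinite geodesics in $\Hull(Z)$. The segment $[z',w']$ lies in $P$ by convexity, and the plan is to identify one among these (at most six) biinfinite geodesics, call it $\ell_0$, with Hausdorff distance bounded by a universal $\delta$ from $[z,w]$. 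Then $[z_0,w_0]$ is taken to be the sub-segment of $\ell_0$ between the nearest-point projections of $z$ and $w$.

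The main obstacle is producing a uniform Hausdorff bound in this final step, independent of the length of $[z,w]$ and of $Z$. The key fact needed is uniform thinness of small ideal polytopes: for $n=2$, every point of an ideal polygon with at most four vertices is within a universal distance of some side or diagonal, by triangulating into ideal triangles and invoking the classical thinness of ideal triangles in $\HH^2$ (each being $\log(1+\sqrt{2})$-thin). For $n=3$, one additionally uses that ideal tetrahedra have uniformly bounded volume (at most the volume $V_0 \approx 1.0149$ of the regular ideal tetrahedron), which forces their inradii to be uniformly bounded; thus every interior point is close to a face, where the $2$-dimensional thinness applies. The selection of $\ell_0$ among the six candidates must be made compatibly with the direction of $\ell$---pairing the endpoint of $\mu^-$ whose ray from $z'$ aligns with the direction toward $w$ with the analogous endpoint of $\mu^+$---so that $\ell_0$ genuinely fellow-travels $[z',w']$ rather than crossing it transversely, in which case the hyperbolic divergence of transverse geodesics would immediately destroy any uniform bound.
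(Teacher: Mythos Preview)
Your approach is essentially the same as the paper's. Both extend $[z,w]$ maximally inside $\Hull(Z)$, observe that each finite exit point lies on a biinfinite geodesic in $\partial\Hull(Z)$ (using the bent/pleated structure of the boundary for $n=3$), and then pick at each exit point the ray of that boundary geodesic making the larger angle with the extended segment. Your ``alignment'' criterion for choosing $a^\pm$ is precisely the paper's ``angle $\geq \pi/2$'' criterion, and your resulting $\ell_0 = [a^-,a^+]$ is exactly the biinfinite geodesic the paper produces.

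The one point where your write-up diverges from the paper is the justification of the final Hausdorff bound. You invoke \emph{thinness} of the ideal polytope $P$, but thinness only says every point of $[z',w']$ is close to \emph{some} side; it does not by itself give that the whole segment stays close to a \emph{single} side or diagonal (indeed, you note the danger of transverse crossing). The paper instead argues directly: the concatenation
\[
[a^-,z']\cup [z',w']\cup [w',a^+]
\]
is a broken geodesic with angles $\geq \pi/2$ at the two break points, hence (once one arranges, as the paper does, that the middle segment has length at least some fixed constant, with the short case being trivial) a uniform quasigeodesic, and therefore uniformly Hausdorff-close to $\ell_0$. This is the clean reason your selection works, and it sidesteps any need to analyze the polytope $P$ or its thinness. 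A minor related point: in the $n=3$ case the bending lamination does not cover $\partial\Hull(Z)$---there are flat ideal-polygon complementary regions---but your conclusion that every boundary point lies on a biinfinite geodesic in $\Hull(Z)$ remains correct, since such geodesics exist within each flat piece.
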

\begin{proof}
Extend $[z,w]$ as far as possible in both directions.  
If it extends indefinitely in both directions in $\Hull(Z)$, then $[z,w]$ is itself contained in a biinfinite geodesic in $\Hull(Z)$ and we are done.  
If not, then $[z,w]$ is contained in $\mathcal G$, a geodesic segment or ray that terminates in the boundary of the convex hull.   
By moving $\mathcal G$ a uniformly bounded amount to some $\mathcal G'$ if necessary, we can assume that each endpoint lies on a biinfinite geodesic in the boundary of $\Hull(Z)$, and that $\mathcal G'$ has length at least $10$, say.  
This follows from the fact that if $\Hull(Z)$ has dimension $2$, then the boundary of $\Hull(Z)$ is a union of biinfinite geodesics, and if $\Hull(Z)$ has dimension $3$, its boundary is a hyperbolic surface bent along a geodesic lamination \cite{epsteinmarden}.   
To each endpoint of $\mathcal G'$, append a ray of the biinfinite geodesic in the direction that makes the larger of the two angles with $\mathcal G'$ (which is at least $\pi/2$).  
The resulting broken geodesic is a uniformly bounded distance from a biinfinite geodesic, and this geodesic contains $[z,w]$ in a uniformly bounded neighborhood, as required.
\end{proof}

\begin{proof}[Proof of Proposition \ref{P:qi subsurfaces}] Let $W \geq 1/2$ be as in Proposition \ref{P:widthmain}.  
Therefore, for each $u$ there exists an integer $t \in \RR$ such that
	\begin{equation} \label{E:t option} 
		\Hull(u) \subset \HH^2 \times [t - W,t+W].
	\end{equation}

The action of $\Gamma$ on $\HH^2 \times \RR$ descends to an action on $\RR$ given by translation under $\eta$.  
Therefore, the projection of
	\[ 
		\Hull(\gamma \cdot u) = \gamma \Hull(u) 
	\]
to $\RR$ is the image of $\Hull(u)$ under the projection, after translating by $\eta(\gamma)$.  It follows that if $t$ is an integer satisfying \eqref{E:t option} for $u$, then
\begin{equation} \label{E:t choice} \Hull(\gamma \cdot u) \subset \HH^2 \times [t_u + \eta(\gamma) - W,t_u + \eta(\gamma) + W] .
\end{equation}

Let $T$ be a transversal for the action of $\Gamma$ on the set of simplices in $\C(\mathring{S})$, that is, a choice of simplex from each $\Gamma$ orbit.  
For any $u \in T$, pick $t_u = t$ satisfying \eqref{E:t option}, then define $t_{\gamma \cdot u} = t_u + \eta(\gamma)$ for any $u \in T$ and $\gamma \in \Gamma$.  
From \eqref{E:t choice}, it follows that
	\[ 
		\Hull(u) \subset \HH^2 \times [t_u -W,t_u + W] 
	\]
for every $u \in \C(\mathring{S})$.  
Since $\eta$ is a homomorphism, $t_{\gamma \cdot u} = t_u + \eta(\gamma)$ holds for every $\gamma \in \Gamma$ and $u \subset \C(\mathring{S})$.

Given any integer $t$, let $d^\times_t$ denote the path metric on $\HH^2 \times [t-W,t+W]$ induced by the hyperbolic metric $d$.
\begin{claim}  There exists $K > 0$ such that the inclusion
	\[
		\HH_t^2 \to \HH^2 \times [t-W,t+W] 
	\]
is a $K$--bilipschitz embedding (with respect to $d_t$ and $d^\times_t$).
\end{claim}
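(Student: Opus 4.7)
The plan is to reduce to the $\hat d$-metric and then exploit the product-like structure given by the orthogonality of the suspension flow to the fibers.

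By the bilipschitz equivalence $d \asymp \hat d$ recalled at the start of Section~4, the path metrics $d^\times_t$ and $\hat d^\times_t$ on the slab $\HH^2 \times [t-W, t+W]$ are bilipschitz with constant independent of $t$. It therefore suffices to prove the analogous claim with $\hat d^\times_t$ in place of $d^\times_t$. The inequality $\hat d^\times_t \le d_t$ on $\HH^2_t$ is immediate, since by construction $\hat d$ restricts to $d_t$ on the fiber $\HH^2_t$, so any $d_t$-geodesic in $\HH^2_t$ is itself a path in the slab of the same $\hat d$-length.

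For the reverse inequality, parameterize the slab by $\Phi(x,s) = \varphi_s(x)$ for $x \in \HH^2_t$ and $s \in [-W, W]$, where $\varphi_s$ is the lifted suspension flow. Then $\Phi$ is a $\Gamma$-equivariant diffeomorphism, and because the flow lines are $\hat d$-orthogonal to the fibers, the pulled-back metric has the orthogonal form
$\Phi^*\hat d = g_s + h(x,s)\,ds^2,$
where $g_s = \varphi_s^* d_{t+s}$ is a hyperbolic metric on $\HH^2_t$ and $h>0$. Given any path $\gamma$ in the slab from $x, y \in \HH^2_t$, projecting $\Phi^{-1}\!\circ\gamma$ to the first factor yields a path $\tilde\gamma$ in $\HH^2_t$ whose $g_{s(\tau)}$-speed at each time $\tau$ is at most the $\hat d$-speed of $\gamma$, since the discarded flow component contributes orthogonally to the $\hat d$-norm.

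The main technical point is the uniform bilipschitz control of $g_s$ against $g_0 = d_t$ for $|s| \le W$; equivalently, uniform bilipschitz control on the flow maps $\varphi_s \colon \HH^2_t \to \HH^2_{t+s}$. This follows from compactness of $M$ modulo cusps, where $\hat d$ has a standard form. On the compact part, a $C^\infty$-family of $\Gamma$-equivariant diffeomorphisms between hyperbolic fibers is uniformly bilipschitz by standard compactness; in each cusp, the assumption that $\hat d$ agrees with the hyperbolic metric $d$, together with the fibers intersecting the cusps in totally geodesic surfaces and the flow lines being orthogonal horocycles, yields an explicit local description with uniform bilipschitz control. Combining this with the preceding paragraph yields $d_t(x,y) \le K \cdot d^\times_t(x,y)$ with $K$ depending only on $W$, as required.
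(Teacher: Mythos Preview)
Your proof is correct and follows essentially the same approach as the paper: reduce to $\hat d$ via the global bilipschitz equivalence, then use the suspension flow to project the slab onto $\HH^2_t$, bounding the Lipschitz constant of the projection via compactness of the complement of the cusps together with the explicit cusp structure. The paper phrases the second step slightly more directly---the flow-line projection is $K_1^W$--Lipschitz because the time-$t$ flow is $K_1^{|t|}$--bilipschitz (and in fact isometric in the cusps)---whereas you write out the pulled-back orthogonal metric $g_s + h\,ds^2$ and compare path speeds, but these are the same argument.
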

\begin{proof}[Proof of Claim]
Let $\hat d^\times_t$ denote the path metric on $\HH^2 \times [t-W,t+W]$ induced by $\hat d$.
Let $K_0$ be the bilipschitz constant for the identity map between $d^\times_t$ and $\hat d^\times_t$.
It follows that the inclusion
	\[ 
		\HH_t^2 \to \HH^2 \times [t-W,t+W] 
	\]
with respect to $d_t$ and $d^\times_t$ is $K_0$--Lipschitz.

There is a $K_1 > 1$ such that the suspension flow $\varphi_t$ is $K_1^t$--bilipschitz with respect to $\hat d$.  
This follows from compactness of the complement of the cusp neighborhoods and the fact that the flow is by isometries on the cusp neighborhoods.  
Lifting this flow to $\HH^3$, we can use it to define a projection
	\[ 
		\HH^2 \times [t-W,t+W] \to \HH_t^2
	\]
by projecting out the flow lines.  
This is $K_1^W$--Lipschitz with respect to $\hat d^\times_t$ and $d_t$, and hence $K_0K_1^W$--Lipschitz with respect to $d^\times_t$ and $d_t$.   
Setting $K = K_0K_1^W > K_0$, it follows that the inclusion
	\[ 
	 	\HH_t^2 \to \HH^2 \times [t-W,t+W] 
	 \]
is $K$--bilipschitz.
\end{proof}

Let $R' > 0$ be the stability constant for $(K,0)$-quasigeodesics in the Gromov hyperbolic metric space $(\HH^2 \times [t_u- W,t_u + W], d^\times_t)$, see Theorem \ref{T:QIstability}.  
Let $\delta > 0$ be the constant from Lemma \ref{L:segments almost extend} and set
	\[
		C = 4K(K \delta + R') .
	\]
The next claim will prove the first part of the proposition.

\begin{claim} For any $u \subset \C(\mathring{S})$ the inclusion
	\[ 
		\Hull_{t_u}(u) \to \HH^3 
	\]
is a $(K,C)$--quasiisometric embedding.
\end{claim}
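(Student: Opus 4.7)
The plan is to establish both inequalities of the $(K,C)$--quasiisometric embedding definition for the inclusion $\Hull_{t_u}(u) \to \HH^3$, comparing the metric $d_{t_u}$ on the source with $d$ on the target. The upper bound $d(z,w) \le K\,d_{t_u}(z,w)$ is essentially immediate from the preceding Claim: since $\Hull(u) \subset \HH^2 \times [t_u - W, t_u + W]$ by our choice of $t_u$, and $d^\times_{t_u}$ is the $d$--path metric on this slab, we have $d \le d^\times_{t_u}$ on the slab, so the Claim gives $d(z,w) \le d^\times_{t_u}(z,w) \le K\,d_{t_u}(z,w)$. All the substantive work lies in the reverse inequality $d_{t_u}(z,w) \le K\,d(z,w) + KC$.

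For that, the plan is to compare two biinfinite geodesics sharing the same pair of endpoints at infinity. Given $z,w \in \Hull_{t_u}(u)$, applying Lemma \ref{L:segments almost extend} inside $\HH^2_{t_u}$ yields $z_0, w_0 \in \Hull_{t_u}(u)$ with $d_{t_u}(z,z_0), d_{t_u}(w,w_0) \le \delta$ and with $[z_0,w_0]_{t_u}$ contained in a biinfinite $d_{t_u}$--geodesic $L' \subset \Hull_{t_u}(u)$. The endpoints of $L'$ lie in the limit set of $\pi_1(S)_u$ in $\partial_\infty \HH^2_{t_u}$, and the Scott--Swarup theorem cited just before Proposition \ref{P:qi subsurfaces} matches them with endpoints in $\partial_\infty \HH^3$ of a biinfinite $d$--geodesic $L \subset \Hull(u)$. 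By Proposition \ref{P:widthmain}, $L$ lies in the slab $\HH^2 \times [t_u-W, t_u+W]$.

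The key observation is that inside the Gromov--hyperbolic space $(\HH^2 \times [t_u-W, t_u+W], d^\times_{t_u})$, both $L'$ and $L$ are uniform quasigeodesics with matching endpoints at infinity: $L'$ is a $(K,0)$--quasigeodesic by the $K$--bilipschitz claim, while $L$ is even a $d^\times_{t_u}$--geodesic, because any sub-arc of $L$ has $d^\times_{t_u}$--length equal to its $d$--length (same Riemannian metric) and therefore equal to the $d$--distance between its endpoints (since $L$ is a $d$--geodesic), forcing $d^\times_{t_u} = d$ along $L$. Theorem \ref{T:QIstability} then supplies $z_1, w_1 \in L$ with $d^\times_{t_u}(z_0,z_1), d^\times_{t_u}(w_0,w_1) \le R'$. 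Two triangle inequalities, together with $d^\times_{t_u}(z_1,w_1) = d(z_1,w_1)$, $d \le d^\times_{t_u}$, and the bound $d(z,z_0), d(w,w_0) \le K\delta$ extracted from bilipschitzness, yield
\[
    d^\times_{t_u}(z_0,w_0) \le d(z,w) + 4R' + 2K\delta.
\]
Applying the bilipschitz claim once more to pass back to $d_{t_u}$, and adding the two $\delta$ errors to return to $z$ and $w$, gives $d_{t_u}(z,w) \le K\,d(z,w) + (4KR' + 2K^2\delta + 2\delta)$, which fits within $KC$ for the chosen $C = 4K(K\delta + R')$.

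The principal obstacle is the careful three--way bookkeeping between the metrics $d$, $d_{t_u}$, and $d^\times_{t_u}$. The technical enablers are Lemma \ref{L:segments almost extend}, which reduces the comparison to biinfinite geodesics so that stability of quasigeodesics applies, and Proposition \ref{P:widthmain}, which confines the ambient $d$--geodesic $L$ to the slab and so lets $L$ simultaneously realize $d$-- and $d^\times_{t_u}$--distance; without both inputs, neither the hyperbolicity of the ambient slab nor the identification of the endpoints of $L$ and $L'$ could be exploited.
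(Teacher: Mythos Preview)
Your proposal is correct and follows essentially the same approach as the paper: reduce to a biinfinite $d_{t_u}$--geodesic via Lemma~\ref{L:segments almost extend}, match its endpoints in $\partial_\infty\HH^3$ to obtain a $d$--geodesic in $\Hull(u)$ that lies in the slab (hence is a $d^\times_{t_u}$--geodesic), apply stability of quasigeodesics, and finish with triangle inequalities bridging $d$, $d^\times_{t_u}$, and $d_{t_u}$. The only differences are notational and in the order of the triangle-inequality steps; your additive constant $4KR'+2K^2\delta+2\delta$ is in fact slightly sharper than (and bounded above by) the paper's $C=4K(K\delta+R')$, so the $(K,C)$ bound holds as stated.
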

\begin{proof}[Proof of Claim]
Since $\Hull_{t_u}(u) \subset \HH^2_{t_u}$ is isometrically embedded, it follows that the inclusion
	\[ 
		\Hull_{t_u}(u) \to  \HH^2 \times [t_u -W,t_u+W]
	\]
is a $K$--bilipschitz embedding.

Now let $z',w' \in \Hull_{t_u}(u)$ be any two points.  
According to Lemma \ref{L:segments almost extend} there are points $z'_0,w'_0 \in \Hull_{t_u}(u)$ such that
	\begin{equation} \label{E:close in hull t}
		d_{t_u}(z',z'_0), d_{t_u}(w',w'_0) \leq \delta
	\end{equation}
and such that the geodesic segment $[z'_0,w'_0]$ extends to a biinfinite $d_{t_u}$--geodesic $\mathcal G' \subset \Hull_{t_u}(u)$.

Since the limit set of $\Hull_{t_u}(u)$ in $\partial_\infty \HH_{t_u}^2$ embeds in $\partial_\infty\HH^3$, the path $\mathcal G'$ has two endpoints in $\partial_\infty \HH^3$. Let $\mathcal G$ be the $d$--geodesic with these endpoints, which is necessarily contained in $\Hull(u)\subset \HH^2\times[t_u-W,t_u+W]$. In particular, $\mathcal G$ is a $d_{t_u}^\times$--geodesic.  Since the $d_{t_u}$--geodesic $\mathcal G'$ is a $(K,0)$--quasigeodesic with respect to $d^\times_{t_u}$, Theorem~\ref{T:QIstability} implies that $\mathcal G$ and $\mathcal G'$ have $d_{t_u}^\times$--Hausdorff distance at most $R'$.

Let $z,w \in \mathcal G$ be points with
	\[ 
		d^\times_{t_u}(z_0',z), d^\times_{t_u}(w_0',w) \leq R'.
	\]
Since $d$ is less than $d^\times_{t_u}$, appealing to this and \eqref{E:close in hull t} we have
	\begin{equation} \label{E:z close in h3} 
		d(z',z)   \leq  d^\times_{t_u}(z',z) \leq  d^\times_{t_u}(z',z_0') + d^\times_{t_u}(z_0',z) \leq K \delta + R',
	\end{equation}
and likewise
	\begin{equation} \label{E:w close in h3} 
		d(w',w)  \leq  d^\times_{t_u}(w',w) \leq K \delta + R'.
	\end{equation}

Combining \eqref{E:z close in h3} and \eqref{E:w close in h3} with the triangle inequality, the fact that 
	\[
		\Hull_{t_u}(u) \to \HH^2 \times [t_u - W,t_u + W]
	\] 
is a $K$--bilipschitz embedding, and the fact that $d = d^\times_{t_u}$ on $\Hull(u)$ (since $\Hull(u)$ is convex) we find
	\begin{align*}
		d_{t_u} (z',w') 	& \leq  K d^\times_{t_u}(z',w') \\
					& \leq  K(d^\times_{t_u}(z,w) + d^\times_{t_u}(z',z) + d^\times_{t_u}(w,w')) \\
					& \leq  K d(z,w) + K(2(K\delta + R'))  \\
					& \leq   K(d(z',w') + d(z,z') + d(w',w)) + 2K(K \delta + R')\\
					& \leq K d(z',w') + 2K(K \delta + R') + 2K(K \delta + R') \\
					& =  K d(z',w') + C.
	\end{align*}
On the other hand, since $d \leq d^\times_{t_u}$ on $\HH^2 \times [t_u- W,t_u+W]$ it follows that
	\[ 
		d(z',w') \leq d^\times_{t_u}(z',w') \leq K d_{t_u}(z',w') + C.
	\]
Therefore, the inclusion $\Hull_{t_u}(u) \to \HH^3$ is a $(K,C)$--quasiisometric embedding, proving the claim.
\end{proof}

To finish the proof of the proposition, let $R = R' + \delta > 0$, where $R'$ is as in the proof of the claim.  
By increasing $R' > 0$ if necessary, we can assume that if $\mathcal G'$ is a $(K,0)$--quasigeodesic in $\HH^2 \times [t - W, t+ W]$ and $\mathcal G$ is the unique geodesic a $d^\times_t$--Hausdorff distance at most $R'$ away, then for any segment $[z,w] \subset \mathcal G$ there is a segment $[z',w'] \subset \mathcal G'$ for which the $d^\times_t$--Hausdorff distance between $[z,w]$ and $[z',w']$ is at most $R'$ and $d_{t}^\times(z,z'),d_{t}^\times(w,w') \leq R'$.

Let $u \subset \C(\mathring{S})$ be any simplex.  
Given $z,w \in \Hull(u)$, let $\mathcal G$ be a biinfinite geodesic in $\Hull(u)$ containing points $z_0,w_0 \subset \mathcal G$ such that
	\[ 
		d(z,z_0),d(w,w_0) \leq \delta,
	\]
as given by Lemma \ref{L:segments almost extend}.

Let $\mathcal G'$ be the unique $d_{t_u}$--geodesic in $\Hull_{t_u}(u)$ with the same endpoints as $\mathcal G$.
Since $\mathcal{G}'$ is a $(K,0)$--quasigeodesic for $d_{t_u}^\times$, it has $d^\times_{t_u}$--Hausdorff distance at most $R'$ from $\mathcal G$.
Let $z',w' \in \mathcal G'$ be any points such that the $d^\times_{t_u}$--Hausdorff distance between $[z_0,w_0]$ and $[z',w']$ is at most $R'$ and
	\[ 
		d^\times_{t_u}(z_0,z'),d^\times_{t_u}(w_0,w') \leq R'.
	\]
Since $d^\times_{t_u} \geq d$, it follows from the triangle inequality that $d(z,z'),d(w,w') \leq R'$ and $[z,w]$ and $[z',w']$ have $d$--Hausdorff distance at most $R' + \delta = R$, completing the proof.
\end{proof}

\section{Hull intersections}

Let $C(M_{G_0}) = \Hull(G_0)/G_0$ and $C(M_G) = \Hull(G)/G$.  
Since $G_0 \triangleleft G$ is an infinite normal subgroup, the limit sets are equal so $\Hull(G_0) = \Hull(G)$, and hence there is an induced covering map $C(M_{G_0}) \to C(M_G)$---in fact it is a $\ZZ$--covering.  
The same is true for the $r$--neighborhoods, for any $r > 0$,
	\[ 
		N_r(C(M_{G_0})) \to N_r(C(M_G)).
	\]

We can compose the fibration $M \to S^1$ with the map $N_r(C(M_G)) \to M$ obtained by restricting the covering map $M_G \to M$.  
This produces a map $f \co N_r(C(M_G)) \to S^1$.  
This lifts to $\widetilde f \co  N_r(C(M_{G_0})) \to \RR$ which is simply the restriction to $N_r(C(M_{G_0}))$ of the projection onto the second coordinate of the product structure $M_{G_0} \cong S_{G_0} \times \RR$.
Let $S_{G_0,0} = S_{G_0} \times \{0\}$.
 	\[
		\begin{tikzpicture}[>= to, line width = .075em, 
			baseline=(current bounding box.center)]
		\matrix (m) [matrix of math nodes, column sep=1.5em, row sep = 1.5em, 		text height=1.5ex, text depth=0.25ex]
		{
			N_r(C(M_{G_0})) 
			& N_r(C(M_G))
			&
				\\
			M_{G_0}  
			& M_G 
			& M 
				\\
			\RR 
			& S^1  
			& S^1 
				\\
		};
		\path[->,font=\scriptsize]
		(m-1-1) edge (m-1-2)
		
		(m-2-1) edge (m-2-2)
		(m-2-1) edge (m-3-1)
		(m-2-2) edge (m-2-3)
		(m-2-2) edge (m-3-2)
		(m-2-3) edge (m-3-3)
		
		(m-3-1) edge (m-3-2)
		;
		\path[right hook->,font=\scriptsize]
		(m-1-1) edge (m-2-1)
		(m-1-2) edge (m-2-2)
		;
		\draw[double distance = .15em,font=\scriptsize]
		(m-3-2) --	(m-3-3)
		;
		\end{tikzpicture}
	\]

Because $\eta \co G \to \ZZ$ is surjective, the fibers of $\widetilde f$ project homeomorphically to the fibers of $f$ by the covering map $N_r(C(M_{G_0})) \to N_r(C(M_G))$.  
In particular, because $f$ has compact fibers, so does $\widetilde f$, and hence 
	\[ 
		\widetilde f^{-1}(0) = N_r(C(M_{G_0})) \cap S_{G_0,0} \subset S_{G_0,0}
	\]
is compact.  
We will assume, without loss of generality, that for whatever choice of $r$ we investigate, $0$ is a regular value for $\widetilde f$ restricted to the boundary of $N_r(C(M_{G_0}))$.  
Then $\widetilde f^{-1}(0) \subset S_{G_0,0}$ is a compact submanifold, hence has only finitely many components.

The next Proposition is the key ingredient needed to adapt the arguments from \cite{kentleiningerschleimer}.
\begin{proposition} \label{P:bounded diameter 1}
There exists $D > 0$ such that for any simplex $u \in \C(\mathring{S})$, the diameter of $\Hull(u) \cap N_1(\Hull(G))$ is at most $D$.
\end{proposition}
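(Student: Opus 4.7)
The plan is to argue by contradiction, reducing the three--dimensional statement to a two--dimensional limiting argument inside the single fiber $\HH^2_0$ via Proposition~\ref{P:qi subsurfaces}, and then contradicting Kra's Theorem~\ref{T:kra} in the spirit of \cite{kentleiningerschleimer}. Suppose the conclusion failed, so that there exist simplices $u_n\subset\C(\mathring{S})$ and points $x_n,y_n\in\Hull(u_n)\cap N_1(\Hull(G))$ with $d(x_n,y_n)\to\infty$. Since $\eta\vert_G\co G\to\ZZ$ is surjective and $t_{\gamma\cdot u}=t_u+\eta(\gamma)$, I first apply suitable powers of the distinguished element $g\in G$ with $\eta(g)=1$ to each triple $(u_n,x_n,y_n)$ to arrange $t_{u_n}=0$ for every $n$; this preserves the hypotheses because $g$ is an isometry of $\HH^3$ preserving $\Hull(G)$. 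Proposition~\ref{P:qi subsurfaces}(2) then furnishes points $x_n',y_n'\in\Hull_0(u_n)$ with $d(x_n,x_n'),d(y_n,y_n')\leq R$ such that the $d_0$--geodesic $[x_n',y_n']_0\subset\Hull_0(u_n)$ has $d$--Hausdorff distance at most $R$ from the $d$--geodesic $[x_n,y_n]\subset\Hull(u_n)$. Because $N_1(\Hull(G))$ is convex in $\HH^3$, the segment $[x_n,y_n]$ lies in $N_1(\Hull(G))$, and so $[x_n',y_n']_0\subset N_{1+R}(\Hull(G))\cap\HH^2_0$; part~(1) of the same proposition yields $d_0(x_n',y_n')\to\infty$.

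Next I exploit the $G_0$--action on the fiber $\HH^2_0$. As noted immediately before the statement of the proposition, $\widetilde{f}^{-1}(0)=N_{1+R}(C(M_{G_0}))\cap S_{G_0,0}$ is compact; equivalently $G_0$ acts cocompactly on $N_{1+R}(\Hull(G))\cap\HH^2_0$. I choose $h_n\in G_0$ so that $h_n\cdot x_n'$ lies in a fixed compact set $K_0\subset\HH^2_0$; since $h_n\in\pi_1(S)=\ker\eta$, the normalization $t_{h_n u_n}=t_{u_n}=0$ persists upon replacing $(u_n,x_n',y_n')$ by $(h_nu_n,h_nx_n',h_ny_n')$. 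After passing to a subsequence, $x_n'\to x_\infty\in K_0$ and the unit tangent to $[x_n',y_n']_0$ at $x_n'$ converges. Since the $d_0$--lengths of the segments go to infinity, Arzel\`a--Ascoli yields a $d_0$--geodesic ray $\rho\co[0,\infty)\to\HH^2_0$ with $\rho(0)=x_\infty$, contained in $N_{1+R}(\Hull(G))\cap\HH^2_0$, and realized as a locally uniform limit of subsegments of $[x_n',y_n']_0\subset\Hull_0(u_n)$.

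The final step is to extract a contradiction from the ray $\rho$. Projecting to $S_{G_0,0}=\HH^2_0/G_0$ gives a locally geodesic ray $\bar\rho$ that remains in the compact set $\widetilde{f}^{-1}(0)$. A pigeonhole/recurrence argument on the unit tangent vectors along $\bar\rho$ then produces a nontrivial element $\gamma\in G_0$ whose $\HH^2_0$--translation axis is uniformly close to $\rho$. Because $\rho$ is a geometric limit of $d_0$--geodesic segments in the convex sets $\Hull_0(u_n)$, a further subsequence argument shows that $\gamma$ is conjugate into the $\pi_1(S)$--stabilizer of some simplex; equivalently, the free homotopy class of $\gamma$ in $S$ is carried by a proper essential subsurface. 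Consequently $\gamma$ is non--filling in $S$, and Kra's Theorem~\ref{T:kra} implies that $\gamma$ is not pseudo-Anosov in $\Mod(\mathring{S})$. Since $\gamma\in G_0<G$, this contradicts the hypothesis that $G$ is purely pseudo-Anosov.

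The main obstacle is the final step: the subsurfaces $\Sigma_{u_n}\subset S$ underlying the $u_n$ may vary through infinitely many topological types and positions (even after the $G_0$--translations $h_n$), so one must carefully coordinate the recurrence of $\bar\rho$ with a geometric-limit analysis of the convex cores $\Hull_0(u_n)$ to guarantee that the recurring element $\gamma\in G_0$ really is captured by (a conjugate of) the fundamental group of a proper essential subsurface of $S$. Everything else is essentially formal given Proposition~\ref{P:qi subsurfaces} and the compactness of $\widetilde{f}^{-1}(0)$.
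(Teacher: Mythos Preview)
Your overall strategy---normalize to $t_u=0$ via Proposition~\ref{P:qi subsurfaces}, push everything into the single fiber $\HH^2_0$, and exploit compactness of $\widetilde f^{-1}(0)$---matches the paper's. The divergence is in how the fiberwise statement is finished. The paper does \emph{not} run a limiting/recurrence argument. Instead, since $\widetilde f^{-1}(0)=N_{R+1}(C(M_{G_0}))\cap S_{G_0,0}$ is a compact surface, each component $X_0$ has finitely generated $\pi_1(X_0)=G_1<G_0$, and any $d_0$--segment in $N_{R+1}(\Hull(G_0))\cap\Hull_0(u)$ lies (after a $G_0$--translation) in $\widetilde X_0\subset N_{r}(\Hull_0(G_1))$. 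At that point the paper simply \emph{cites} the two--dimensional bounded--diameter result already proved in \cite{kentleiningerschleimer} (stated here as Proposition~\ref{P:bounded diameter fuchsian}) for the finitely generated, purely pseudo-Anosov Fuchsian group $G_1$. This gives a uniform bound immediately, with no limits, no recurrence, and no geometric-limit analysis of the $\Hull_0(u_n)$.

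The step you flag as the ``main obstacle'' is a genuine gap as written, and it is exactly the content that the paper outsources to \cite{kentleiningerschleimer}. Your recurrence produces $\gamma\in G_0$ whose axis is \emph{close} to the ray $\rho$; but closeness of the axis to a limit of segments in varying $\Hull_0(u_n)$ does not, by itself, force the axis to lie in any single $\Hull_0(u)$, and hence does not show that the closed geodesic representing $\gamma$ is carried by a proper essential subsurface of $S$. The Hausdorff limit of the $\Hull_0(u_n)$ can be all of $\HH^2_0$, and for each fixed $n$ only a bounded portion of $\rho$ is approximated by a segment of $\Hull_0(u_n)$, so no single $u_n$ captures a full period of $\gamma$. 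Overcoming this is precisely what the proof of Proposition~\ref{P:bounded diameter fuchsian} in \cite{kentleiningerschleimer} accomplishes; rather than reproving it inside your contradiction argument, the cleaner route is the paper's: pass from the possibly infinitely generated $G_0$ to the finitely generated $G_1=\pi_1(X_0)$ and invoke the two--dimensional result directly.
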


Before we launch into the proof we give a brief sketch.
\begin{proof}[Sketch of Proposition \ref{P:bounded diameter 1}]
Given two points of $\Hull(u) \cap N_1(\Hull(G))$, the geodesic between these points is contained in the intersection by convexity.  According to Proposition \ref{P:qi subsurfaces} any geodesic segment contained in $\Hull(u)$ is within a distance $R$ of a $d_{t_u}$--geodesic segment in $\Hull_{t_u}(u)$.  Thus, it suffices to find a uniform bound on the length of a $d_{t_u}$--geodesic segment in the intersection
\[ \Hull_{t_u}(u) \cap N_{R+1}(\Hull(G)) = \Hull_{t_u}(u) \cap N_{R+1}(\Hull(G_0)).\]
Furthermore, replacing $u$ by  its image under an element of $G$, we can assume $t_u = 0$.

Now suppose we have a $d_0$--geodesic segment in $\Hull_{t_u}(u) \cap N_{R+1}(\Hull(G_0))$.  Taking the quotient by $G_0$ we obtain a $d_0$--geodesic segment inside $\widetilde f^{-1}(0) \subset S_{G_0,0}$.   Since $\widetilde f^{-1}(0)$ is compact, the fundamental group is finitely generated, $G_1 < G_0$.  The compact subsurface $\widetilde f^{-1}(0)$ and the $d_0$--geodesic segment lift to the cover $S_{G_1,0} \to S_{G_0,0}$ corresponding to $G_1$.   Now we observe that $G_1 < \pi_1(S)$ is a finitely generated Fuchsian group, and is purely pseudo-Anosov as a subgroup of $\Mod(\mathring{S})$.  Thus, we can appeal directly to the arguments of \cite{kentleiningerschleimer} to bound the length of the $d_0$--geodesic segment.
\end{proof}

The proof of Proposition \ref{P:bounded diameter 1} requires the following result from \cite{kentleiningerschleimer}.

\begin{proposition}[Corollary 5.2 of \cite{kentleiningerschleimer}]
\label{P:bounded diameter fuchsian}
Let $G'< \pi_1(S)$ be a finitely generated subgroup which is purely pseudo-Anosov when considered as a subgroup of $\Mod(\mathring{S})$. Then for each $t\in \RR$, there exists $D'_t>0$ such that for any simplex $u \in \C(\mathring{S})$, the $d_t$--diameter of $\Hull_t(u) \cap N_1(\Hull_t(G'))$ is at most $D'_t$.\qed
\end{proposition}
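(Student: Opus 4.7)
The plan is to argue by contradiction, combining the Fuchsian convex cocompactness of $G'$ with a compactness argument that ultimately forces an element of $G'$ to be non-filling, contradicting the hypothesis via Kra's theorem.

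First I would verify that $G'$ is a convex cocompact Fuchsian subgroup of $\Isom^+(\HH^2_t)$. Since $G'$ is purely pseudo-Anosov in $\Mod(\mathring S)$, Theorem~\ref{T:kra} forces every infinite--order element of $G'$ to represent a filling loop in $\pi_1(S)$; in particular no such element is peripheral, so $G'$ contains no parabolics, and Theorem~\ref{T:Fuchsianccc} gives convex cocompactness. Thus $\Hull_t(G')/G'$ is compact, and because $\HH^2_t$ is $\mathrm{CAT}(-1)$ the $1$--neighborhood $N_1(\Hull_t(G'))$ is convex, with $G'$ acting cocompactly on it.

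Fix $t$ and suppose for contradiction that there are simplices $u_n\subset \C(\mathring S)$ and points $p_n,q_n\in\Hull_t(u_n)\cap N_1(\Hull_t(G'))$ with $d_t(p_n,q_n)\to\infty$. Both $\Hull_t(u_n)$ and $N_1(\Hull_t(G'))$ are convex, so the $d_t$--geodesic $[p_n,q_n]$ lies entirely in the intersection. Using the cocompact $G'$--action together with the identity $\Hull_t(g\cdot u) = g\cdot \Hull_t(u)$ for $g\in G'<\pi_1(S)$, I replace each pair $(p_n,u_n)$ by a $G'$--translate so that $p_n$ lies in a fixed compact fundamental domain $K\subset N_1(\Hull_t(G'))$. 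Passing to a subsequence yields $p_n\to p_\infty\in K$ and a limit geodesic ray $\rho$ based at $p_\infty$, with $\rho\subset N_1(\Hull_t(G'))$ and ideal endpoint in $\Lambda_{G'}$. Passing to a further subsequence, the limit sets $\Lambda_{\pi_1(S)_{u_n}}\subset \partial_\infty\HH^2_t$ Hausdorff--converge to a compact set $\Lambda_\infty$, and the ideal endpoint of $\rho$ must lie in $\Lambda_\infty$. The heart of the argument is now to extract a nontrivial element $\alpha\in G'$, a power of which is conjugate into some $\pi_1(S)_{u_n}$. Recurrence of $\rho$ in $N_1(\Hull_t(G'))$ modulo the compact quotient produces $\alpha\in G'$ coarsely translating $\rho$ along itself; combined with the closeness of $\rho$ to $\Hull_t(\pi_1(S)_{u_n})$ for large $n$, a stability argument should upgrade ``$\alpha^k$ almost preserves $\Hull_t(u_n)$'' to ``$\alpha^k$ preserves $\Hull_t(u_n)$,'' placing $\alpha^k \in \pi_1(S)_{u_n}$. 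Since $\pi_1(S)_{u_n}$ is the fundamental group of a proper essential subsurface of $S$, the element $\alpha^k$ is represented by a non-filling loop, so Theorem~\ref{T:kra} contradicts the purely pseudo-Anosov hypothesis.

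The main obstacle is this final stability step: converting the asymptotic coincidence between $\alpha$--translates and the subgroups $\pi_1(S)_{u_n}$---which live in a discrete indexing set---into an actual equality $\alpha^k\in\pi_1(S)_{u_n}$ for some fixed $n$. The rigidity of the discrete simplex set of $\C(\mathring S)$ is crucial here: once the approximation is tight enough, no continuous deformation is available, so the only option is exact membership. Controlling the ``$n\to\infty$'' limit of the discrete objects $u_n$ purely through their limit sets, and then descending from $\Lambda_\infty$ back to an individual $u_n$ with a concrete stabilizer, is where I expect the proof to require the most care.
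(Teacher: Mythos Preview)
First, note that the paper does not prove this proposition at all: it is quoted verbatim as Corollary~5.2 of \cite{kentleiningerschleimer} and closed with a \qed. So there is no argument in this paper to compare your sketch against; any comparison would have to be with the cited source.

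On the substance of your sketch: the opening steps are fine. Deducing that $G'$ is convex cocompact Fuchsian from Kra's theorem and Theorem~\ref{T:Fuchsianccc} is correct, and the translation/compactness setup producing a limiting geodesic ray in $N_1(\Hull_t(G'))$ with endpoint in $\Lambda_{G'}$ is the right beginning.

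The genuine gap is exactly the one you flag, and it is not merely a matter of care. Your proposed mechanism---``once the approximation is tight enough, discreteness of the simplex set forces exact membership $\alpha^k\in\pi_1(S)_{u_n}$''---does not work. The simplices $u_n$ themselves are varying with $n$, so knowing that $\alpha^k\cdot\Hull_t(u_n)$ is Hausdorff--close to $\Hull_t(u_n)$ for large $n$ gives you $\Hull_t(\alpha^k\cdot u_n)$ close to $\Hull_t(u_n)$, but there is no rigidity statement forcing $\alpha^k\cdot u_n = u_n$: distinct vertices of $\C(\mathring S)$ can have hulls that are arbitrarily close on arbitrarily large compacta. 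The ``discreteness'' you invoke applies to a single simplex, not to the infinite family of hulls.

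The argument in \cite{kentleiningerschleimer} avoids this trap by working with limit sets rather than seeking an element in a specific stabilizer. One extracts a \emph{biinfinite} limiting geodesic $\gamma\subset N_1(\Hull_t(G'))$ (translate midpoints, not initial points), so that both endpoints lie in $\Lambda_{G'}$. Each approximating segment lies in some $\Hull_t(v_n)$, whose frontier geodesics are lifts of the simple closed curve $\bar v_n\in\C(S)$; in the limit, $\gamma$ is either a lift of a simple closed geodesic or lies in some $\Hull_t(w)$. In the first case the projection of $\gamma$ to the compact convex core $\Hull_t(G')/G'$ must close up, producing a non-filling element of $G'$. In the second case one has $\Lambda_{G'}\cap\Lambda_{\pi_1(S)_w}\ne\emptyset$, and a Susskind--type intersection theorem for convex cocompact Fuchsian groups yields a nontrivial element of $G'\cap\pi_1(S)_w$, again non-filling. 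Either way one contradicts pure pseudo-Anosovness via Theorem~\ref{T:kra}, but without ever needing $\alpha^k$ to land in a \emph{particular} $\pi_1(S)_{u_n}$.
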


We also need the following lemma.

\begin{lemma} \label{L:bounded diameter 2}
Let $R > 0$ be the constant from Proposition \ref{P:qi subsurfaces}.
There exists a $D' > 0$ with the following property.
Given any $u \in \C(\mathring{S})$, let $t_u \in \ZZ$ be the integer from Proposition \ref{P:qi subsurfaces}.  Then any $d_{t_u}$--geodesic segment
	\[ 
		[z',w'] \subset N_{R+1}(\Hull(G_0)) \cap \Hull_{t_u}(u)
	\]
has $d_{t_u}$--length at most $D'$.
\end{lemma}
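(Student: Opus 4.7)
The plan is to reduce to the case $t_u = 0$ using the $\ZZ$-action by an element $g\in G$ with $\eta(g)=1$, and then pass via the compact submanifold $\widetilde f^{-1}(0) \subset S_{G_0,0}$ to an intermediate cover on which Proposition~\ref{P:bounded diameter fuchsian} applies. For the first step, the isometry $g^{-t_u}$ of $(\HH^3,d)$ shifts the fibration $\HH^3 \to \RR$ by $-t_u$, carries $\HH^2_{t_u}$ isometrically onto $\HH^2_0$ (intertwining $d_{t_u}$ with $d_0$), and maps $\Hull_{t_u}(u)$ to $\Hull_0(g^{-t_u}\cdot u)$. By normality of $G_0 \triangleleft G$, it also preserves $\Hull(G_0)$ and its $d$-neighborhood $N_{R+1}(\Hull(G_0))$. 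Thus $g^{-t_u}\cdot [z',w']$ is a $d_0$-geodesic of equal length in $\Hull_0(g^{-t_u}\cdot u) \cap N_{R+1}(\Hull(G_0))$, so we may assume $t_u = 0$.

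I would then project $[z',w']$ under the local isometry $\HH^2_0 \to S_{G_0,0}$ obtained by restricting $\HH^3 \to M_{G_0}$ to the fiber. Because $\Hull(G_0)$ descends to $C(M_{G_0})=\Hull(G_0)/G_0$ in $M_{G_0}$, the image $\alpha$ is a $d_0$-local geodesic of the same length lying in
\[
	N_{R+1}(C(M_{G_0})) \cap S_{G_0,0} = \widetilde{f}^{-1}(0).
\]
By compactness, $\widetilde{f}^{-1}(0)$ has only finitely many components, and $\alpha$, being connected, lies in a single one, say $K$. Let $G_1 = G_1(K) < G_0$ be the image of $\pi_1(K)$ for an appropriate basepoint choice; then $G_1$ is finitely generated and, as a subgroup of $G$, is purely pseudo-Anosov in $\Mod(\mathring{S})$. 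Theorem~\ref{T:kra} rules out peripheral elements in $G_1 < \pi_1(S)$, so $G_1$ has no Fuchsian parabolics, and Theorem~\ref{T:Fuchsianccc} then gives that $G_1$ is convex cocompact as a Fuchsian group.

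Next, I would pass through the intermediate cover $S_{G_1,0} \to S_{G_0,0}$ corresponding to $G_1 < G_0$. Since $G_1 = \pi_1(K)$, the subsurface $K$ lifts homeomorphically to a compact, $\pi_1$-surjective subsurface $\widetilde K \subset S_{G_1,0}$ containing the lift of $\alpha$. Convex cocompactness of $G_1$ yields a compact convex core $\Hull_0(G_1)/G_1 \subset S_{G_1,0}$ whose complement consists of finitely many funnels; being compact, $\widetilde K$ enters each funnel only to a bounded depth. Hence there is a constant $A$, depending only on $K$, with $\widetilde K \subset N_A(\Hull_0(G_1)/G_1)$. Lifting this containment back to $\HH^2_0$, we conclude $[z',w'] \subset \Hull_0(u) \cap N_A(\Hull_0(G_1))$. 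An $N_A$-neighborhood analog of Proposition~\ref{P:bounded diameter fuchsian}, which follows by the same argument as in \cite{kentleiningerschleimer} since $G_1$ is finitely generated and purely pseudo-Anosov, then bounds the $d_0$-diameter of this set---and thus the length of $[z',w']$---by a constant depending only on $K$ and $A$. Taking the maximum over the finitely many components $K$ of $\widetilde f^{-1}(0)$ gives the required uniform $D'$.

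The main obstacle is securing uniformity despite the dependence of $G_1$ on the simplex $u$; this is resolved by the compactness of $\widetilde f^{-1}(0)$, which produces only finitely many possible components $K$ and hence only finitely many groups $G_1$ and depths $A$ to consider.
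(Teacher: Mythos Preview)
Your proof is correct and follows essentially the same approach as the paper: reduce to $t_u=0$ via $g^{-t_u}$, project into the compact set $\widetilde f^{-1}(0)\subset S_{G_0,0}$, pass to the finitely generated purely pseudo-Anosov subgroup $G_1$ carried by a component, and invoke Proposition~\ref{P:bounded diameter fuchsian}. The one place where the paper is more explicit is the choice of lift: since the preimage of $K$ in $S_{G_1,0}$ (equivalently, the set of components of $p_0^{-1}(K)$ in $\HH^2_0$) need not be connected, one first applies an element $g_0\in G_0$ so that $[z',w']$ lands in the particular component stabilized by $G_1$ before concluding $[z',w']\subset N_A(\Hull_0(G_1))$; your phrase ``containing the lift of $\alpha$'' elides this, but the fix is exactly the $G_0$--translation the paper uses.
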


\begin{proof}[Proof of Lemma \ref{L:bounded diameter 2}] Recall that we have chosen $g \in G$ with $\eta(g) = 1$.  
Given $u \in \C(\mathring{S})$ and any $d_{t_u}$--geodesic segment
	\[ 
		[z',w']  \subset N_{R+1}(\Hull(G_0)) \cap \Hull_{t_u}(u), 
	\]
Proposition \ref{P:qi subsurfaces} implies
	\[ 
		t_{g^{-t_u}\cdot u} = t_u + \eta(g^{-t_u}) = t_u - t_u = 0.
	\]
Combining this with the fact that $g^{-t_u}$ is an isometry on all of $\HH^3$, as well as from $\HH^2_{t_u}$ to $\HH^2_0$, and the fact that it preserves $\Hull(G_0) = \Hull(G)$, it follows that
	\begin{align*} 
		[g^{-t_u} z',g^{-t_u} w'] 	& =  g^{-t_u} ([z',w'])\\
							& \subset  g^{-t_u}\big(N_{R+1}(\Hull(G_0)) \cap \Hull_{t_u}(u)\big) \\
 							& =  N_{R+1}(\Hull(G_0)) \cap g^{-t_u}(\Hull_{t_u}(u))\\
 							& =  N_{R+1}(\Hull(G_0)) \cap \Hull_0(g^{-t_u} \cdot u)
	\end{align*}
Therefore, it suffices to prove the lemma for the case that $t_u = 0$.

By compactness, there are only finitely many components of
	\[ 
		\widetilde f^{-1}(0) = N_{R+1}(C(G_0)) \cap S_{G_0,0}.
	\]
To prove the lemma we must bound the length of a segment in $N_{R+1}(\Hull(G_0)) \cap \Hull_0(u)$.  
Since such a segment must project to one of the components $X_0 \subset \widetilde f^{-1}(0)$, it suffices to find a constant $D_0'  > 0$ such that the conclusion of the lemma is satisfied for segments that project to $X_0$.  
Taking the maximum of the constants over the finitely many components of $\widetilde f^{-1}(0)$ will complete the proof.

Let $p_0 \co \HH^2_0 \to S_{G_0,0}$ be the covering projection and $\widetilde X_0 \subset p_0^{-1}(X_0)$ a component of the preimage.  
Since $G_0$ acts transitively on the components of $p_0^{-1}(X_0)$, given a $d_0$--geodesic segment
	\[ 
		[z',w'] \subset p_0^{-1}(X_0) \cap \Hull_0(u)
	\]
there exists an element $g_0 \in G_0$ such that
	\[ 
		[g_0(z'),g_0(w')] = g_0([z',w']) \subset \widetilde X_0 \cap \Hull_0(g_0 \cdot u). 
	\]
Therefore, it suffices to find a constant $D_0' > 0$ such that for all $u \in \C(\mathring{S})$ with $t_u = 0$, any geodesic segment
	\[ 
		[z',w'] \subset  \widetilde X_0 \cap \Hull_0(u) 
	\]
has $d_0$--length at most $D_0'$.  This follows from the next claim.

\begin{claim}
There exists $D_0'> 0$ such that for any $u \in \C(\mathring{S})$ with $t_u = 0$, the $d_0$--diameter of $\widetilde X_0 \cap \Hull_0(u)$ is at most $D_0'$.
\end{claim}
\begin{proof}[Proof of claim] Since $X_0$ is a compact manifold, $\pi_1(X_0)$ is finitely generated, and hence the image in $G_0$ is a finitely generated subgroup $G_1 < G_0$.  
A conjugate of $G_1$ acts cocompactly on $\widetilde X_0$, and without loss of generality, assume it is $G_1$ itself.  It follows that there exists $r > 0$ such that $N_r(C_0(G_1))$, the $r$--neighborhood of the $d_0$--convex core of $G_1$, contains $X_0$.  
Consequently we have
	\[ 
		\widetilde X_0 \subset N_r(\Hull_0(G_1)).
	\]
By Proposition~\ref{P:bounded diameter fuchsian}, since $G_1$ is finitely generated and purely pseudo-Anosov there exists $D_0' >0$ such that $N_r(\Hull_0(G_1)) \cap \Hull_0(u)$ has diameter at most $D_0'$.  Therefore, so does $\widetilde X_0 \cap \Hull_0(u)$.
\end{proof}
This completes the proof. 
\end{proof}

\begin{proof}[Proof of Proposition \ref{P:bounded diameter 1}]
Let $D' >0$ be as in Lemma \ref{L:bounded diameter 2} and set $D = 2R + KD' + C$, where $K,C,R$ are as in Proposition \ref{P:qi subsurfaces}.
Now suppose $u \subset \C(\mathring{S})$ is a simplex and
	\[ 
		z,w \in N_1(\Hull(G)) \cap \Hull(u).
	\] 
Since this is the intersection of two convex sets in $\HH^3$, the geodesic $[z,w]$ also lies in this intersection.  
By Proposition \ref{P:qi subsurfaces} there exists a $d_{t_u}$--geodesic $[z',w'] \subset \Hull_{t_u}(u)$ with $d$--Hausdorff distance at most $R$ from $[z,w]$.

It follows that
	\[ 
		[z',w'] \subset N_{R+1}(\Hull(G)) \cap \Hull_{t_u}(u),
	\]
and hence has $d_{t_u}$--length at most $D'$ by Lemma \ref{L:bounded diameter 2}.  
Combining this with Proposition \ref{P:qi subsurfaces} and the triangle inequality we see that the $d$--distance between $z,w$ is at most
		\begin{align*}
			d(z,w) 	& \leq 	d(z,z') + d(z',w') + d(w',w) \\
					& \leq 	R + K d_{t_u}(z',w') + C + R\\
					& \leq 	2R + KD'+ C = D
		 \end{align*}
as required.
\end{proof}

\section{End of the proof}

We remark that this is formally just like the proof of Theorem 6.3 in \cite{kentleiningerschleimer}.

\begin{proof}[Proof of Theorem \ref{T:main}]
Fix a vertex $u\in \C^0(\mathring{S})$ and a point $x\in \Hull(u)\cap \Hull(G)$. Let $d_{\C}$ denote the path metric on the curve complex $\C(\mathring{S})$ and equip $G$ with the metric defined by
	\[
		d_G(g,h) \colonequals d_{\Hull(G)}(g\cdot x, h\cdot x) = d(g\cdot x,h\cdot x).
	\]
Since $G$ acts cocompactly on $\Hull(G)$, the \v{S}varc--Milnor lemma (Theorem \ref{T:svarc-milnor}) implies that $d_G$ is quasiisometric to any (finitely generated) word metric on $G$.

We need to show that the orbit map $(G,d_G)\to (\C(\mathring{S}),d_{\C})$ defined by $g\mapsto g\cdot u$ is a quasiisometric embedding, so we must find constants $K\geq 1$ and $C\geq 0$ such that
	\[ 
		\frac{1}{K}d_G(1,g) - C\leq  d_{\C}(u,g\cdot u) \leq K d_G(1,g) + C
	\]
for all $g\in G$. 
Such an upper bound follows immediately from the triangle inequality and the fact that $d_G$ is quasiisometric to the word metric on $G$, and we therefore focus on the lower bound.

Let $(u_0,u_1,\dotsc,u_n)$ be a geodesic path in $\C(\mathring{S})$ from $u = u_0$ to $g\cdot u = u_n$, where $n = d_\C(u,g\cdot u)$. 
We will use this to construct a path from $x$ to $g\cdot x$ in $\Hull(G)$ whose length is bounded in terms of $n$. 
Let $\omega_i = [u_{i-1},u_i]\subset \C(\mathring{S})$, $1\leq i \leq n$, denote the $1$--simplices comprising our $\C$--geodesic from $u$ to $g\cdot u$. 
For a simplex $v\subset \C(\mathring{S})$, recall that $\Hull(v)\subset\HH^3$ is defined to be the convex hull of the limit set of the stabilizer $\pi_1(S)_v$ of $v$ in $\pi_1(S) < \Gamma$. 
Since $u_{i-1},u_i\subset \omega_i$, the corresponding stabilizers are related by $\pi_1(S)_{\omega_i} \subset \pi_1(S)_{u_{i-1}} \cap \pi_1(S)_{u_i}$.
So the corresponding hulls satisfy
	\[
		\Hull(\omega_i) \subset \Hull(u_{i-1})\cap \Hull(u_i).
	\]
In particular, for each $1 \leq i < n$ we have that
	\begin{equation}\label{hullcontainments}
		\Hull(\omega_i),\Hull(\omega_{i+1}) \subset \Hull(u_i).
	\end{equation}

We now construct a piecewise geodesic path $\gamma\subset \HH^3$ connecting $x$ to $g\cdot x$ as follows. 
For each $1 \leq i \leq n$ choose any point $x_i\in \Hull(\omega_i)$; we also set $x_0 = x$ and $x_{n+1} = g\cdot x$. 
Recall that, by choice of $x$, we have $x_0 = x\in \Hull(u) = \Hull(u_0)$ and therefore also that $x_{n+1} = g\cdot x \in \Hull(g\cdot u) = \Hull(u_n)$. 
For each $0\leq i \leq n$ we let let $\gamma_i$ denote the $d$--geodesic $[x_{i},x_{i+1}]$; since $\Hull(u_i)$ is convex, equation \eqref{hullcontainments} and the above implies that $\gamma_i\subset \Hull(u_i)$. 
The concatenation $\gamma = \gamma_0\gamma_1\dotsc\gamma_n$ now gives a piecewise geodesic path from $x$ to $g\cdot x$.

The path $\gamma$ may be arbitrarily long and is furthermore not necessarily contained in $\Hull(G)$. 

Let $\tau\colon \HH^3 \to \Hull(G)$ be the closest point projection. It is a well known fact in hyperbolic geometry that $\tau$ is a contraction and that, furthermore, there exists a constant $T > 0$ such that for any $d$--geodesic segment $\sigma$ outside of $N_1(\Hull(G))$, the projection $\tau(\sigma)$ has length at most $l(\tau(\sigma))\leq T$. 
Now, since $\Hull(u_i)\cap N_1(\Hull(G))$ is convex, it cuts $\gamma_i$ into at most three geodesic segments: at most one in $\Hull(u_i)\cap N_1(\Hull(G))$, which, by Proposition~\ref{P:bounded diameter 1}, has length at most $D$, and at most two which are disjoint from $\Hull(u_i)\cap N_1(\Hull(G))$. 
By the contraction properties of $\tau$, it follows that
\[l(\tau(\gamma_i))\leq 2T+D\]
for each $0\leq i\leq n$. Since $\tau(\gamma)$ is a path in $\Hull(G)$ connecting $x$ to $g\cdot x$, we conclude that
	\[
		d_G(1,g) = d_{\Hull(G)}(x,g\cdot x) \leq l(\tau(\gamma)) \leq (2T+D)(n+1).
	\]
Isolating $n = d_{\C}(u,g\cdot u)$, we find that
	\[
	d_{\C}(u,g\cdot u) = n \geq \frac{1}{2T+D}d_G(1,g) - 1.\qedhere
	\]
\end{proof}

\section{Generalizations}

We now modify the proof of Theorem \ref{T:main} to prove Theorem \ref{T:generalized}.
Suppose that $S$ is a closed surface and $\Theta < \Mod(S)$ is a subgroup of $\Mod(S)$.  
Section \ref{sec:groupExtensions} explains that there is an associated $\pi_1(S)$ extension which includes into the Birman Exact Sequence as in Equation (\ref{E:ses inclusions}):
	\[
		\begin{tikzpicture}[>= to, line width = .075em, 
			baseline=(current bounding box.center)]
		\matrix (m) [matrix of math nodes, column sep=1.5em, row sep = 1.5em, 		text height=1.5ex, text depth=0.25ex]
		{
			1 & \pi_1(S)  & \Gamma_\Theta & \Theta & 1 \\
			1 & \pi_1(S)  & \Mod(\mathring{S}) & \Mod(S) & 1 \\
		};
		\path[->,font=\scriptsize]
		(m-1-1) edge 					(m-1-2)
		(m-1-2) edge 					(m-1-3)
		(m-1-3) edge 					(m-1-4)
		(m-1-4) edge 					(m-1-5)
		
		(m-2-1) edge 					(m-2-2)
		(m-2-2) edge 					(m-2-3)
		(m-2-3) edge 					(m-2-4)
		(m-2-4) edge 					(m-2-5)
		
		(m-1-3) edge					(m-2-3)
		(m-1-4) edge					(m-2-4)
		;
		\draw[double distance = .15em,font=\scriptsize]
		(m-2-2) -- 					(m-1-2)
		;
		\end{tikzpicture}
	\]

According to Theorem~\ref{T:ccciffhyperbolic}, the group $\Gamma_\Theta$ is $\delta$--hyperbolic if and only if $\Theta$ is convex cocompact; see also \cite{Mj.Sardar.2012}.

\begin{MainGeneralized}  Suppose $S$ is a closed surface, $\Theta < \Mod(S)$ is a convex cocompact subgroup and $G< \Gamma_\Theta$ is a finitely generated quasiisometrically embedded subgroup.  
If $G$ is purely pseudo-Anosov as a subgroup of $\Mod(\mathring{S})$, then it is convex cocompact.
\end{MainGeneralized}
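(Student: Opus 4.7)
The approach is to parallel the proof of Theorem~\ref{T:main}, with the Cayley graph of $\Gamma_\Theta$ playing the role of $\HH^3$. By Theorem~\ref{T:ccciffhyperbolic}, the convex cocompactness of $\Theta$ supplies a $\delta > 0$ for which $\Gamma_\Theta$ is $\delta$--hyperbolic. The first block of the argument for Theorem~\ref{T:main}, namely Lemma~\ref{L:cccKleinian}, is not needed in this setting: the hypothesis that $G < \Gamma_\Theta$ is quasiisometrically embedded directly gives that $G$ is quasiconvex in $\Gamma_\Theta$, and its (coarse) convex hull $\Hull(G)$ carries a cocompact $G$--action, so by the \v{S}varc--Milnor lemma any word metric on $G$ is quasiisometric to $d_G(g,h) \colonequals d_{\Hull(G)}(g\cdot x, h\cdot x)$ for a fixed basepoint $x \in \Hull(G) \cap \Hull(u_0)$ in the hull of any chosen vertex $u_0 \in \C(\mathring{S})$.

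Keeping the notation of the previous sections, for a simplex $u \subset \C(\mathring{S})$ define $\Hull(u) \subset \Gamma_\Theta$ to be the quasiconvex hull of the stabilizer $\pi_1(S)_u$ acting on $\Gamma_\Theta$, and define $G_0 = G \cap \pi_1(S)$. The fibration $\HH^3 \to \RR$ is replaced by the coset structure: each left coset of $\pi_1(S)$ in $\Gamma_\Theta$ is quasiisometric to $\pi_1(S)$ with its word metric, and inside the coset we may form $\Hull_t(u)$, the hull of $\pi_1(S)_u$ there. The crux of the argument is then Proposition~\ref{P:qi subsurfaces general}, the abstract analogue of Proposition~\ref{P:qi subsurfaces}: for each simplex $u$ a ``height'' $t_u$ can be chosen (covariantly under the action of $\Gamma_\Theta$ on heights) so that $\Hull_{t_u}(u) \hookrightarrow \Gamma_\Theta$ is a uniform quasiisometric embedding and every geodesic of $\Hull(u)$ fellow-travels a corresponding geodesic in $\Hull_{t_u}(u)$ with uniformly bounded endpoint displacement.

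The main obstacle is establishing this proposition, since the proof of Proposition~\ref{P:qi subsurfaces} relied on two concrete features of the fibered hyperbolic $3$--manifold setting that are no longer available: the literal product structure $\HH^3 = \HH^2 \times \RR$ together with the bilipschitz suspension flow, and the width estimate of Proposition~\ref{P:widthmain} coming from \cite{kentleiningerwidth}. In the abstract setting one must argue in $\Gamma_\Theta$ itself, using only $\delta$--hyperbolicity, the convex cocompactness of $\Theta$ acting on $\C(S)$, and the coarse product geometry coming from the extension. The plan is to exploit that the quotient $\Theta$ quasi-acts on $\C(S)$ as a convex cocompact group and that every coarse fiber is a hyperbolic surface group Cayley graph; a coarse width bound for $\Hull(u)$ in the ``height'' direction can then be extracted from the geometry of convex cocompact subgroups of $\Mod(S)$ on $\C(S)$, after which the rest of the proof of Proposition~\ref{P:qi subsurfaces} transfers via standard stability of quasigeodesics (Theorem~\ref{T:QIstability}).

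With Proposition~\ref{P:qi subsurfaces general} in place, the remainder of the argument follows the template of Sections~6 and 7 with only cosmetic changes. First, the analogue of Proposition~\ref{P:bounded diameter 1} is proved by reducing a bound on the diameter of $\Hull(u) \cap N_1(\Hull(G))$ to a bound on $d_{t_u}$--lengths of geodesic segments in $\Hull_{t_u}(u) \cap N_{R+1}(\Hull(G_0))$, translating by an element so $t_u = 0$, passing to a component of the corresponding quotient subsurface whose fundamental group $G_1 < G_0$ is finitely generated and (by the hypothesis that $G$ is purely pseudo-Anosov in $\Mod(\mathring{S})$) purely pseudo-Anosov, and finally applying Proposition~\ref{P:bounded diameter fuchsian}. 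Given a geodesic $(u_0,\dots,u_n)$ in $\C(\mathring{S})$ from $u$ to $g\cdot u$, one lifts it to a piecewise geodesic path in $\Gamma_\Theta$ with the $i$--th segment contained in $\Hull(u_i)$, projects to $\Hull(G)$ using the contraction of closest-point projection in the $\delta$--hyperbolic space $\Gamma_\Theta$, and applies the bounded-diameter proposition to each segment to conclude
	\[
		d_G(1,g) \leq (2T + D)(n+1) = (2T+D)\bigl(d_{\C}(u,g\cdot u) + 1\bigr).
	\]
This is the lower bound needed to show $g \mapsto g \cdot u$ is a quasiisometric embedding $G \to \C(\mathring{S})$, whence $G$ is convex cocompact by the Kent--Leininger/Hamenst\"adt criterion.
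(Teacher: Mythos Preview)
Your outline is in the right spirit, but the claim that Sections~6--7 transfer ``with only cosmetic changes'' is where it goes wrong; the paper restructures the argument in non-trivial ways precisely because the template does not transfer directly. The paper works not in the Cayley graph but in a geometric model $\widetilde X$, an $\HH^2$--bundle over the Cayley graph $\widetilde B$ of $\Theta$ pulled back from the Bers fibration, so that each fiber $\HH^2_t$ carries a genuine hyperbolic metric and $\Hull_t(u)$ is a literal convex hull. In $\widetilde X$ the set $\Hull(u)$ is \emph{not} convex---the paper defines it merely as the union of quasiinvariant axes of elements of $\pi_1(S)_u$---and accordingly Proposition~\ref{P:qi subsurfaces general} contains \emph{only} the quasiisometric--embedding statement, not the fellow--traveling part~2 that you invoke. (The width bound is still supplied by \cite{kentleiningerwidth}, now as Proposition~\ref{P:coarse width}; it is not something you extract yourself.) Without part~2 there is no analogue of Proposition~\ref{P:bounded diameter 1}; instead the paper bounds the \emph{projection} of $d_{t_u}$--geodesic segments in $\Hull_{t_u}(u)$ directly (Proposition~\ref{P:bounded diameter projection general}), via Lemma~\ref{L:bounded intersection general} and a coarse projection lemma.

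Two further points where your template breaks: your ``translate so $t_u = 0$'' step does not make sense here, since $t_u \in \widetilde B^{(0)}$ and the image $\hat G$ of $G$ in $\Theta$ need not act transitively on vertices; the paper instead reduces to finitely many representatives $t_1,\dots,t_n$ lying over the vertices of a compact set in $B_G$. And the final path is built differently: rather than $d$--geodesics in $\Hull(u_i)$, it alternates between $d_{t_{u_i}}$--geodesics in $\Hull_{t_{u_i}}(u_i)$ and short $d$--geodesics $[x_i',x_i'']$ of length at most $D_0$ joining points in $\Hull_{t_{u_{i-1}}}([u_{i-1},u_i])$ and $\Hull_{t_{u_i}}([u_{i-1},u_i])$, supplied by Corollary~\ref{C:close hulls}. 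Your alternative---taking $\Hull(u)$ to be a coarse convex hull and deriving a fellow--traveling statement from part~1 via bounded Hausdorff distance---could plausibly be made to work, but that is genuine additional content you would have to supply, and the paper avoids it entirely by the rearrangement above.
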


\begin{remark}
We note that although we are able to replace the $\Gamma$ from Theorem \ref{T:main} with a more general class of groups, we do need the assumption on the subgroup $G < \Gamma_\Theta$.  
This is due to the fact that a generalization of Lemma \ref{L:cccKleinian} seems quite difficult, or perhaps false, in this more general setting.  
\end{remark}

To simplify notation, we henceforth write $\Gamma = \Gamma_\Theta$.

We start by describing a geometric model for $\Gamma$ that will be most useful for the proof.  
By passing to finite index subgroups, we assume $\Theta$ is torsion--free.  
Let $\widetilde B$ denote the Cayley graph of $\Theta$ with respect to some finite generating set, and $B = \widetilde B/\Theta$ the quotient wedge of circles.
Choose any continuous map $\widetilde B \to \T(S)$ which is equivariant with respect to the actions of $\Theta$ on $\widetilde B$ by covering transformations and on $\T(S)$ via the action induced by the inclusion $\Theta \to \Mod(S)$.  
The Bers fibration
	\[
		\begin{tikzpicture}[>= to, line width = .075em, 
			baseline=(current bounding box.center)]
		\matrix (m) [matrix of math nodes, column sep=1.5em, row sep = 1em, 		text height=1.5ex, text depth=0.25ex]
		{
			\HH^2  & \T(\mathring{S}) & \T(S) \\
		};
		\path[->,font=\scriptsize]
		(m-1-1) edge (m-1-2)
		(m-1-2) edge (m-1-3)
		;
		\end{tikzpicture}
	\]
is equivariant with respect to the Birman Exact Sequence, and we can pull back the bundle to $\widetilde B$ so that all maps are equivariant:
	\begin{equation}\label{E:lots of equivariance}
		\begin{tikzpicture}[>= to, line width = .075em, 
			baseline=(current bounding box.center),
			descr/.style={fill=white},
			cross line/.style={preaction={draw=white, -, line width=6pt}}
			]
		\matrix (m) [matrix of math nodes, column sep=.5em, row sep = 1.25em, 		text height=1.5ex, text depth=0.25ex]
		{
			1 & & \pi_1(S) & & \Gamma & & \Theta & & 1 \\
			& & & \HH^2 & & \widetilde X & & \widetilde B & \\
			1 & & \pi_1(S) & & \Mod(\mathring{S}) & & \Mod(S) & & 1 \\
			& & & \HH^2 && \T(\mathring{S}) & & \T(S) & \\
		};
		\path[->,font=\scriptsize]
		(m-1-1) 	edge (m-1-3)
		(m-1-3) 	edge (m-1-5)
		(m-1-5) 	edge (m-1-7)
				edge (m-3-5)
		(m-1-7) 	edge (m-1-9)
				edge (m-3-7)

		(m-3-1) 	edge (m-3-3)
		(m-3-3) 	edge (m-3-5)
		(m-3-5) 	edge (m-3-7)
		(m-3-7) 	edge (m-3-9)
		(m-2-4) 	edge [-,line width=.5em,draw=white] 	(m-2-6)
		(m-2-6) 	edge [-,line width=.5em,draw=white]	(m-2-8)
				edge [-,line width=.5em,draw=white]	(m-4-6)
		(m-2-8) 	edge [-,line width=.5em,draw=white]	(m-4-8)
		
		(m-2-4) 	edge	[-,line width=.75em,draw=white]	(m-4-4)
		(m-2-4) 	edge 					(m-2-6)
		(m-2-6) 	edge node[above,near start] {$\varphi$} (m-2-8)
				edge 					(m-4-6)
		(m-2-8) 	edge 					(m-4-8)
		
		(m-4-4) 	edge 					(m-4-6)
		(m-4-6) 	edge 					(m-4-8)
		;
		\draw[double distance = .15em,font=\scriptsize]
		(m-1-3) --	(m-3-3)
		(m-2-4) --	(m-4-4)
		;
		\path[,distance = .15em]
		(m-1-3) --  node[pos=.45]{$\circlearrowleft$}	(m-2-4)
		(m-1-5) --  node[pos=.45]{$\circlearrowleft$}	(m-2-6)
		(m-1-7) --  node[pos=.45]{$\circlearrowleft$}	(m-2-8)
		
		(m-3-3) -- node{$\circlearrowleft$}	(m-4-4)
		(m-3-5) -- node{$\circlearrowleft$}	(m-4-6)
		(m-3-7) -- node{$\circlearrowleft$}	(m-4-8)
		;
		\end{tikzpicture}
	\end{equation}

We give $\widetilde X$ a $\Gamma$--invariant geodesic metric $d$ for which the induced path metric on the fiber $\varphi^{-1}(t) = \HH^2_t$ for $t \in \widetilde B$ is the hyperbolic metric $d_t$.  
Each quotient by the corresponding group is compact, and this produces an $S$--bundle over $B$:
	\[
		\begin{tikzpicture}[>= to, line width = .075em, 
			baseline=(current bounding box.center)]
		\matrix (m) [matrix of math nodes, column sep=1.5em, row sep = 1em, 		text height=1.5ex, text depth=0.25ex]
		{
			S  & X & B. \\
		};
		\path[->,font=\scriptsize]
		(m-1-1) edge (m-1-2)
		(m-1-2) edge (m-1-3)
		;
		\end{tikzpicture}
	\]
By the \v{S}varc--Milnor lemma (Theorem \ref{T:svarc-milnor}), any orbit map $\Gamma \to \widetilde X$ is a quasiisometry with respect to the word metric on $\Gamma$ for any fixed finite generating set.

The analogue of Proposition \ref{P:qi subsurfaces} we need is the following.   
Here $\Gamma$ acts on $\widetilde B$ via the homomorphism $\Gamma \to \Theta$ and on $\C(\mathring{S})$ by the homomorphism $\Gamma \to \Mod(\mathring{S})$.    
We write $\Hull_t(u)$ to denote the convex hull in $\HH_t^2$ of the stabilizer $\pi_1(S)_u$ of $u$ in $\pi_1(S)$, as before.

\begin{proposition} \label{P:qi subsurfaces general}
Suppose that we are in the situation of Diagram \eqref{E:lots of equivariance} and $\widetilde X$ is $\delta$--hyperbolic.  
There exist $K,C > 0$ with the following property.  
For any simplex $u \subset \C(\mathring{S})$ there exists a vertex $t_u  \in \widetilde B^{(0)}$ satisfying $t_{\gamma \cdot u} = \gamma \cdot t_u$ for all $\gamma \in \Gamma$, and such that the inclusion
	\[ 
	\Hull_{t_u}(u) \to \widetilde X 
	\]
is a $(K,C)$--quasiisometric embedding.
\end{proposition}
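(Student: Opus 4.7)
The plan is to adapt the proof of Proposition \ref{P:qi subsurfaces}, replacing its two main ingredients---the Kent--Leininger width theorem (Proposition \ref{P:widthmain}) and the bilipschitz comparison between $d$ and $\hat d$---with analogs valid in the abstract hyperbolic bundle $\widetilde X \to \widetilde B$.

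First, an equivariant choice of $t_u$ must be made. Since $\pi_1(S)_u$ lies in the kernel of $\Gamma \to \Theta$, it acts trivially on $\widetilde B$, so for any fixed basepoint $\tilde x_0 \in \widetilde X$ the orbit $\pi_1(S)_u \cdot \tilde x_0$ is confined to a single fiber. Define $t_u \in \widetilde B^{(0)}$ to be a vertex coarsely closest to the $\widetilde X$--quasiconvex hull of this orbit, then extend via $t_{\gamma \cdot u} \colonequals \gamma \cdot t_u$ after selecting $\Gamma$--orbit representatives of simplices. After passing to a finite-index subgroup of $\Theta$ if necessary to eliminate torsion in simplex stabilizers, the required invariance follows because any element of $\Gamma$ stabilizing $u$ normalizes $\pi_1(S)_u$ and hence coarsely preserves its orbit.

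The central technical step is the analog of the width estimate: there is a uniform $W > 0$ such that for every simplex $u \subset \C(\mathring{S})$, the quasiconvex hull in $\widetilde X$ of $\pi_1(S)_u \cdot \tilde x_0$ has $\varphi$--image contained in a ball of radius $W$ about $t_u$. I expect this to be the main obstacle: the original argument in \cite{kentleiningerwidth} used hyperbolic $3$--manifold techniques not available here, so one must argue only from the $\delta$--hyperbolicity of $\widetilde X$, the convex cocompactness of $\Theta$, and the quasiconvexity of $\pi_1(S)_u$ in $\pi_1(S)$. A natural approach combines a Bowditch-style analysis of quasiconvex hulls with the metric bundle machinery of Mj--Sardar \cite{Mj.Sardar.2012}, exploiting the fact that $\pi_1(S)_u$ coarsely fixes the vertex $u$ in the hyperbolic complex $\C(\mathring{S})$ to rule out long excursions in the $\widetilde B$--direction.

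Granting this estimate, the remainder follows the pattern of Proposition \ref{P:qi subsurfaces}. The slab $Y_u \colonequals \varphi^{-1}(N_W(t_u))$ is quasiconvex in $\widetilde X$ and hence uniformly $\delta'$--hyperbolic with its induced path metric; because the bundle geometry is uniform over a set of $\widetilde B$--diameter $2W$, the inclusion $\HH^2_{t_u} \hookrightarrow Y_u$ is uniformly bilipschitz on any quasiconvex subset and, in particular, on $\Hull_{t_u}(u)$. For $z,w \in \Hull_{t_u}(u)$, Lemma \ref{L:segments almost extend} reduces the problem to the case where $[z,w]$ extends to a biinfinite $d_{t_u}$--geodesic in $\Hull_{t_u}(u)$ whose two endpoints remain distinct in $\partial_\infty \widetilde X$. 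The corresponding $d_{t_u}$-- and $d$--geodesics between these ideal endpoints are both uniform quasigeodesics in the hyperbolic space $Y_u$, so stability of quasigeodesics (Theorem \ref{T:QIstability}) gives them bounded Hausdorff distance, and the triangle inequality then yields a two-sided bound $\tfrac{1}{K} d_{t_u}(z,w) - C \leq d(z,w) \leq K d_{t_u}(z,w) + C$ exactly as in the original argument.
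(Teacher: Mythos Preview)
Your overall architecture matches the paper's, but two points deserve correction.

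First, the width estimate you flag as ``the main obstacle'' is not something you need to prove here: the paper imports it directly as Proposition~\ref{P:coarse width} from \cite{kentleiningerwidth}, which establishes exactly that $\varphi(\Hull(u))$ has uniformly bounded diameter in $\widetilde B$, where $\Hull(u)$ is defined as the union of quasiinvariant $d$--axes of elements of $\pi_1(S)_u$. So your speculative paragraph about Bowditch/Mj--Sardar machinery is unnecessary.

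Second, and more seriously, there is a genuine gap in your final paragraph. You invoke Lemma~\ref{L:segments almost extend} to extend $[z,w]$ to a biinfinite $d_{t_u}$--geodesic $\mathcal G'$ in $\Hull_{t_u}(u)$ and then compare it with the $d$--geodesic $\mathcal G$ in $\widetilde X$ sharing its ideal endpoints. For stability of quasigeodesics to give a bounded Hausdorff distance inside the slab $Y_u = \varphi^{-1}(B_R(t_u))$, you need $\mathcal G$ to actually lie in $Y_u$. In the $3$--manifold setting this was automatic because $\Hull(u)\subset\HH^3$ is genuinely convex, so any $d$--geodesic between limit points of $\pi_1(S)_u$ stays inside it and hence in the slab. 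Here there is no such convexity: $\Hull(u)$ is only the union of axes, and an arbitrary $d$--geodesic between two points of $\partial_\infty\Hull_{t_u}(u)$ has no reason to remain in $Y_u$. The paper fixes this with a sharpened Lemma~\ref{L:segments almost extend 2}, which approximates $[z,w]$ by a segment of a \emph{periodic} $d_{t_u}$--geodesic invariant under some $h\in\pi_1(S)_u$; the comparand $\mathcal G$ is then the quasiinvariant $d$--axis of $h$, which by definition lies in $\Hull(u)\subset Y_u$. Without this passage to periodic geodesics your argument is circular: knowing the ideal endpoints of $\mathcal G'$ remain distinct in $\partial_\infty\widetilde X$, and that $\mathcal G$ stays near $\mathcal G'$, already presupposes the quasiisometric embedding you are trying to prove.
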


In Section \ref{S:Scott.Swarup}, we derive Theorem \ref{T:General.Scott.Swarup} from this Proposition.

The proof of Proposition \ref{P:qi subsurfaces general} requires the following analogue of Proposition \ref{P:widthmain} which is proven in \cite{kentleiningerwidth}.  
Given any simplex $u \subset \C(\mathring{S})$, we let $\Hull(u)$ denote the union of all quasiinvariant geodesic axes in $\widetilde X$ of elements in $\pi_1(S)_u$.
\begin{proposition} \label{P:coarse width}
Suppose that we are in the situation of Diagram \eqref{E:lots of equivariance} and $\widetilde X$ is $\delta$--hyperbolic.  
Then there exists $W > 0$ such that for any simplex $u \subset \C(\mathring{S})$, the set $\Hull(u)$ has $\diam_{\widetilde{B}}(\varphi(\Hull(u))) < W$.
\qed
\end{proposition}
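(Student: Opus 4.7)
The plan is to mimic the proof of Proposition \ref{P:qi subsurfaces} essentially verbatim, substituting Proposition \ref{P:coarse width} in place of Proposition \ref{P:widthmain} and replacing the suspension--flow bilipschitz comparison between fibers and slabs with a \v{S}varc--Milnor argument. The role that the Euclidean width bound and the hyperbolic slab $\HH^2 \times [t_u-W,t_u+W]$ played in $\HH^3$ will be played here by a combinatorial ball in $\widetilde B$ and its preimage under $\varphi$.

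First I would construct $t_u$ equivariantly. By Proposition \ref{P:coarse width}, the set $\varphi(\Hull(u)) \subset \widetilde{B}$ has diameter less than $W$. Fix a transversal $T$ for the $\Gamma$--action on simplices of $\C(\mathring S)$; for each $u \in T$ let $t_u$ be any vertex of $\widetilde{B}$ within distance $W$ of $\varphi(\Hull(u))$, and extend by $t_{\gamma \cdot u} \colonequals \gamma \cdot t_u$ for $\gamma \in \Gamma$. This is well defined because the forgetful map $\C(\mathring S) \to \C(S)$ carries simplices to simplices and $\Theta$ is purely pseudo-Anosov, so the stabilizer in $\Gamma$ of any simplex $u$ is forced to lie in $\pi_1(S)$, which acts trivially on $\widetilde{B}$. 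Define the slab $\Sigma_{t_u} \colonequals \varphi^{-1}(N_W(t_u))$, equipped with the path metric $d^\times_{t_u}$ induced from $d$. Since $\Theta$ acts on its Cayley graph $\widetilde{B}$ by isometries transitively on vertices, the slabs $\Sigma_t$ are pairwise $\Gamma$--isometric, so every constant proved for one holds uniformly across $u$.

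The crucial structural input is that $\pi_1(S)$ acts cocompactly on both $(\HH^2_{t_u}, d_{t_u})$ and $(\Sigma_{t_u}, d^\times_{t_u})$, and the inclusion $\HH^2_{t_u} \hookrightarrow \Sigma_{t_u}$ is $1$--Lipschitz. The \v{S}varc--Milnor lemma (Theorem \ref{T:svarc-milnor}) then forces this inclusion to be a uniform $(K_0,C_0)$--quasiisometric embedding, and simultaneously exhibits $\Sigma_{t_u}$ as quasiisometric to the hyperbolic group $\pi_1(S)$, hence uniformly $\delta'$--hyperbolic. With these replacements the proof of Proposition \ref{P:qi subsurfaces} transfers: given $z,w \in \Hull_{t_u}(u)$, Lemma \ref{L:segments almost extend} produces nearby points lying on a biinfinite $d_{t_u}$--geodesic $\mathcal G' \subset \Hull_{t_u}(u)$; by the fiber--slab QI, $\mathcal G'$ is a quasigeodesic in the $\delta'$--hyperbolic slab; stability of quasigeodesics (Theorem \ref{T:QIstability}) yields a $d^\times_{t_u}$--geodesic $\mathcal G$ with the same ideal endpoints at uniform Hausdorff distance from $\mathcal G'$. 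The usual triangle inequality, using $d \leq d^\times_{t_u}$ on $\Sigma_{t_u}$ and the QI between $d_{t_u}$ and $d^\times_{t_u}$ on $\HH^2_{t_u}$, then yields $d_{t_u}(z,w) \leq K d(z,w) + C$ as required.

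The main obstacle will be supplying a substitute for the Scott--Swarup step, that is, arguing that $\mathcal G$ and a genuine $d$--geodesic in $\widetilde X$ between the same two boundary points both lie close to $\Hull(u)$ so that the ambient $d$--distance is coarsely realized inside the slab. In the Kleinian case this was handed to us for free by genuine convexity of $\Hull(u) \subset \HH^3$; in the abstract setting it must be extracted from the definition of $\Hull(u)$ as a union of quasi-axes of $\pi_1(S)_u$ in $\widetilde{X}$ and from $\delta$--hyperbolicity of $\widetilde{X}$, using quasi-convexity of the subsurface subgroup $\pi_1(S)_u$ inside the hyperbolic group $\pi_1(S)$ and transporting the resulting Hausdorff estimates across the \v{S}varc--Milnor quasiisometry $\Sigma_{t_u} \simeq \pi_1(S)$.
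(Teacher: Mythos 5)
The statement you were asked to prove is Proposition \ref{P:coarse width} itself: the assertion that $\varphi(\Hull(u))\subset\widetilde{B}$ has uniformly bounded diameter, where $\Hull(u)$ is the union of the quasiinvariant geodesic axes in $\widetilde X$ of the elements of $\pi_1(S)_u$. Your proposal never addresses this. Instead you invoke Proposition \ref{P:coarse width} as a known input (``By Proposition \ref{P:coarse width}, the set $\varphi(\Hull(u))\subset\widetilde{B}$ has diameter less than $W$'') and then outline how to deduce that the inclusion $\Hull_{t_u}(u)\to\widetilde X$ is a uniform quasiisometric embedding --- but that is the content of Proposition \ref{P:qi subsurfaces general}, a different result which the paper proves separately and which explicitly takes Proposition \ref{P:coarse width} as an ingredient. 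As an argument for the assigned statement your proposal is circular: the width bound appears as your hypothesis, never as your conclusion.

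For comparison, the paper does not prove Proposition \ref{P:coarse width} internally at all; it is the analogue of Proposition \ref{P:widthmain} and is quoted from \cite{kentleiningerwidth}, which is why it is stated with no argument. A genuine proof would have to bound how far the quasi-axes of elements of the subsurface group $\pi_1(S)_u$ can drift in the $\widetilde B$--direction of $\widetilde X$, and none of the tools you assemble (the \v{S}varc--Milnor comparison of a fiber $\HH^2_{t}$ with a slab $\varphi^{-1}(B_R(t))$, stability of quasigeodesics, Lemma \ref{L:segments almost extend}) bears on that question, since each of them already presupposes that a slab of bounded $\widetilde B$--extent containing $\Hull(u)$ has been located. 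Your closing paragraph correctly senses that convexity of $\Hull(u)$ is unavailable in the abstract setting, but the remedy the paper uses for that difficulty --- replacing biinfinite geodesics by periodic ones via Lemma \ref{L:segments almost extend 2}, whose axes lie in $\Hull(u)$ by definition --- again belongs to the proof of Proposition \ref{P:qi subsurfaces general}, not to the width bound you were asked to establish. To complete the assignment you would need either to cite \cite{kentleiningerwidth} for the coarse width bound or to reconstruct its proof; nothing in your sketch does either.
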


With this Proposition, the proof of Proposition \ref{P:qi subsurfaces general} is similar to that of Proposition \ref{P:qi subsurfaces}. The key idea is again to pick $t_u \in \widetilde B^{(0)}$ lying within a uniformly bounded distance of the image of $\Hull(u)$ in $\widetilde B$ so that $\varphi^{-1}(B_R(t_u))$ will play the role of $\HH^2_{t_u} \times [t_u-W,t_u+W]$. As before, we will see that the inclusion $\HH^2_{t_u}\to \varphi^{-1}(B_R(t_u))$ is uniformly bilipschitz and, using the fact that $\Hull(u)\subset\varphi^{-1}(B_R(t_u))$ is convex, we will show that distances in $\Hull_{t_u}(u)$ are comparable to those in $\Hull(u)$ and thus also in $\widetilde{X}$.

We also need the following minor modification of Lemma \ref{L:segments almost extend}.
\begin{lemma}  \label{L:segments almost extend 2}
There exists $\delta' > 0$ such that for any convex cocompact Fuchsian group $H < \PSL(2,\RR)$, any geodesic segment $[z,w]$ in $\Hull(\Lambda_H) \subset \HH^2$ has Hausdorff distance at most $\delta'$ from a geodesic segment $[z_0,w_0]$ which is contained in a biinfinite \textit{periodic} geodesic in $\Hull(\Lambda_H)$.
\end{lemma}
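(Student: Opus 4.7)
My plan has two steps: first reduce to a biinfinite geodesic in $\Hull(\Lambda_H)$ using the previous lemma, then upgrade that biinfinite geodesic to the axis of a hyperbolic element of $H$ using recurrence and hyperbolic closing inside the compact convex core. The statement follows by the triangle inequality once both pieces are in place.

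Concretely, I would first apply Lemma \ref{L:segments almost extend} with $n=2$ and $Z = \Lambda_H$ to produce the constant $\delta$ from that lemma together with a subsegment $[z_0', w_0']$ of a biinfinite geodesic $\mathcal{G}' \subset \Hull(\Lambda_H)$ with Hausdorff distance at most $\delta$ from $[z,w]$. It then suffices to find a subsegment $[z_0, w_0]$ of the axis of some hyperbolic element of $H$ within a uniformly bounded Hausdorff distance from $[z_0', w_0']$; taking $\delta'$ equal to $\delta$ plus that uniform constant then gives the conclusion.

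Since $H$ is convex cocompact, the convex core $C_H = \Hull(\Lambda_H)/H$ is a compact hyperbolic surface with (possibly empty) geodesic boundary, and the image of $\mathcal{G}'$ in $C_H$ is a biinfinite geodesic that is recurrent by compactness. The periodic orbits of the geodesic flow on $T^1 C_H$ are exactly the tangent lifts of closed geodesics of $C_H$, and these lift to axes of hyperbolic elements of $H$ in $T^1 \HH^2$. The Anosov closing lemma applied to the geodesic flow on $T^1 C_H$ then yields a periodic orbit that shadows the orbit of $\mathcal{G}'$ across the portion corresponding to $[z_0', w_0']$; lifting this shadowing orbit back to $\HH^2$ furnishes the required segment $[z_0, w_0]$ on a periodic axis inside $\Hull(\Lambda_H)$.

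The main obstacle is ensuring that the shadowing constant is \emph{uniform} across all convex cocompact Fuchsian groups. A black-box appeal to Anosov closing produces constants depending on the particular flow, but the hyperbolic structure on $T^1 C_H$ is inherited from the universal object $T^1 \HH^2$, where the stable and unstable horocyclic foliations expand at the universal rate $e^t$. Because the closing construction is local, I would execute it directly in $\HH^2$: since $\mathcal{G}'$ projects to a recurrent orbit, there is an element $h \in H$ that nearly preserves the unit tangent vector at some point of $[z_0',w_0']$, and then standard $\HH^2$ hyperbolic geometry forces the axis of $h$ to shadow $\mathcal{G}'$ across $[z_0',w_0']$ with a bound depending only on the curvature. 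Carrying out this last step carefully, with explicit control of the return-map geometry so that the shadowing estimate is visibly independent of $H$, is where the bulk of the work lies.
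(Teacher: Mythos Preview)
Your first step matches the paper's exactly: apply Lemma~\ref{L:segments almost extend} to replace $[z,w]$ by a segment $[z_0',w_0']$ of a biinfinite geodesic $\mathcal{G}' \subset \Hull(\Lambda_H)$ at Hausdorff distance at most $\delta$. The divergence is in the second step. The paper dispatches it in one line: pairs of fixed points of hyperbolic elements of $H$ are dense in $\Lambda_H \times \Lambda_H$, so every biinfinite geodesic in $\Hull(\Lambda_H)$ is a limit of periodic ones, and $[z_0',w_0']$ can therefore be approximated to within any prescribed $\epsilon$ by a subsegment of an axis. Taking, say, $\epsilon = 1$ yields $\delta' = \delta + 1$ uniformly in $H$; because the approximation is arbitrarily good, your uniformity worry never arises.

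Your route through recurrence and Anosov closing is essentially a dynamical rederivation of this same density fact, and can be made to work, but the sketch as written has two soft spots. First, ``recurrent by compactness'' is not literally true: compactness of $T^1 C_H$ gives $\omega$--limit points, not that the orbit returns near its own past (a geodesic spiraling toward a closed boundary geodesic is a counterexample). What compactness actually yields is a pair of times $t_1 < t_2$ with nearby unit tangents, and the closed orbit produced by closing shadows $\mathcal{G}'$ only over $[t_1,t_2]$; if both lie on the same side of $[z_0',w_0']$, the shadowing misses the segment entirely. To repair this you must arrange the near-return to straddle $[z_0',w_0']$, or else invoke density of periodic orbits near the unit tangent at a point of the segment---which is precisely the fixed-point-pair density the paper uses directly. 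Second, the uniformity concern dissolves once you notice that you may always wait for an arbitrarily close return, making the shadowing error less than $1$ regardless of $H$; the careful work you anticipate at the end is unnecessary.
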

\begin{proof}
Since the fixed points of hyperbolic elements is dense in $\Lambda_H  \times \Lambda_H$, it follows that any biinfinite geodesic in $\Hull(\Lambda_H)$ is a limit of periodic geodesics.  
So we may apply Lemma \ref{L:segments almost extend} to first find a segment $[z_0',w_0']$ in some biinfinite geodesic in $\Hull(\Lambda_H)$, then approximate this as close as we like by a segment $[z_0,w_0]$ contained in a periodic geodesic.
\end{proof}

\begin{proof}[Proof of Proposition \ref{P:qi subsurfaces general}]
For any simplex $u \subset \C(\mathring{S})$ we choose a vertex $t_u$ within a  distance at most $1$ from $\varphi(\Hull(u))$, subject to the equivariance condition $t_{\gamma \cdot u} = \gamma \cdot t_u$ (compare the proof of Proposition \ref{P:qi subsurfaces}).  
We now prove that $t_u$ has the required properties.

Let $R = W + 1$, let $t \in \widetilde B^{(0)}$, and consider the preimage $\varphi^{-1}(B_R(t))$ of the closed ball $B_R(t)$.
Equip $\varphi^{-1}(B_R(t))$ with the induced path metric $d^\times_t$.
Since $\pi_1(S)$ acts cocompactly on $\HH^2_t$ and $\varphi^{-1}(B_R(t))$, the \v{S}varc--Milnor lemma (Theorem \ref{T:svarc-milnor}) implies that this inclusion is a $(K,C')$--quasiisometry for some  $K,C' > 1$.  
In particular, the space $\varphi^{-1}(B_R(t))$ is Gromov hyperbolic.
Since $\Theta$ acts transitively by isometries on $\widetilde B^{(0)}$ (because $B$ has only one vertex), $K$ and $C'$ are independent of $t$.  
Given any simplex $u \subset \C(\mathring{S})$, we have $\Hull_{t_u}(u) \subset \varphi^{-1}(B_{R}(t_u))$ by assumption.  Since the inclusion $\Hull_{t_u}(u) \to \HH^2_{t_u}$ is an isometric embedding, the inclusion
	\[ 
		\Hull_{t_u}(u) \to \varphi^{-1}(B_R(t_u))
	\]
is a $(K,C')$--quasiisometric embedding.

Let $R' > 0$ be the stability constant for $(K,C')$-quasigeodesics in the Gromov hyperbolic metric space $\varphi^{-1}(B_R(t_u))$ given by Theorem \ref{T:QIstability}.  
Let $\delta' > 0$ be the constant from Lemma \ref{L:segments almost extend 2} and set
	\[
		C = 4K(\delta' + R') + C'.
	\]
The next claim will prove the proposition.

\begin{claim} For any $u \subset \C(\mathring{S})$ the inclusion
	\[ 
		\Hull_{t_u}(u) \to \widetilde X 
	\]
is a $(K,C)$--quasiisometric embedding.
\end{claim}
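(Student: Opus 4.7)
My plan is to adapt the argument from the analogous claim within the proof of Proposition \ref{P:qi subsurfaces}, replacing the convexity of $\Hull(u)$ in $\HH^3$---which fails in the general Gromov hyperbolic setting---with periodic approximation via Lemma \ref{L:segments almost extend 2}.

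The upper bound is immediate: composing the already-established $(K, C')$--quasiisometric embedding $\Hull_{t_u}(u) \to \varphi^{-1}(B_R(t_u))$ with the $1$--Lipschitz inclusion $\varphi^{-1}(B_R(t_u)) \hookrightarrow \widetilde X$ gives $d(z', w') \leq K d_{t_u}(z', w') + C'$ for all $z', w' \in \Hull_{t_u}(u)$.

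For the lower bound, given $z', w' \in \Hull_{t_u}(u)$, I invoke Lemma \ref{L:segments almost extend 2} to find $z_0', w_0' \in \Hull_{t_u}(u)$ with $d_{t_u}(z', z_0'), d_{t_u}(w', w_0') \leq \delta'$ lying on a biinfinite \emph{periodic} $d_{t_u}$--geodesic $\mathcal G' \subset \Hull_{t_u}(u)$, which is the axis of some loxodromic $h \in \pi_1(S)_u$. Because $\varphi^{-1}(B_R(t_u))$ is itself Gromov hyperbolic (quasiisometric to $\HH^2$ by Svarc--Milnor), the inclusion $\mathcal G' \hookrightarrow \varphi^{-1}(B_R(t_u))$ is a $(K, C')$--quasigeodesic with two well-defined ideal endpoints, namely the fixed points of $h$. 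Let $\mathcal G$ be a $d^\times_{t_u}$--geodesic in $\varphi^{-1}(B_R(t_u))$ joining these same endpoints; Theorem \ref{T:QIstability} applied inside $\varphi^{-1}(B_R(t_u))$ then gives $d^\times_{t_u}$--Hausdorff distance at most some uniform $R'$ between $\mathcal G$ and $\mathcal G'$. Picking $z, w \in \mathcal G$ within $d^\times_{t_u}$--distance $R'$ of $z_0', w_0'$, a routine triangle-inequality computation using $d \leq d^\times_{t_u}$ and the QI embedding reduces the desired lower bound to establishing $d^\times_{t_u}(z, w) \leq K'' d(z, w) + C''$.

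The main obstacle is exactly this last comparison, which the convexity of $\Hull(u)$ handled for free in the $\HH^3$ setting. I resolve it by noting that $\mathcal G'$ is automatically a $(K, C')$--quasigeodesic in $(\widetilde X, d)$ as well (again since $d \leq d^\times_{t_u}$), and since $\mathcal G$ stays within $d$--distance $R'$ of $\mathcal G'$, the stability of quasigeodesics in the $\delta$--hyperbolic space $\widetilde X$ promotes $\mathcal G$ to a uniform quasigeodesic in $(\widetilde X, d)$ with constants depending only on $\delta, K, C', R'$. Because $\mathcal G$ is a $d^\times_{t_u}$--geodesic parameterized by arc length, this translates to $d^\times_{t_u}(z, w) \leq K'' d(z, w) + C''$. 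All constants involved are uniform in $u$ by the $\Theta$--equivariance of the construction (compare the proof of Proposition \ref{P:qi subsurfaces}), so chaining the estimates yields a $(K, C)$--quasiisometric embedding for uniform $K, C$, completing the proof of the claim.
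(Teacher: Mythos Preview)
Your argument has a circular step.  When you write that ``$\mathcal G'$ is automatically a $(K, C')$--quasigeodesic in $(\widetilde X, d)$ as well (again since $d \leq d^\times_{t_u}$),'' the inequality $d \leq d^\times_{t_u}$ only transfers the \emph{upper} bound $d(a,b)\le K d_{t_u}(a,b)+C'$; it says nothing about the lower bound $d(a,b) \geq \tfrac{1}{K} d_{t_u}(a,b) - C'$.  That lower bound, for points of $\Hull_{t_u}(u)$, is precisely the content of the Claim.  So the statement you invoke to finish is the one you are proving, and the subsequent ``promotion'' of $\mathcal G$ to a $d$--quasigeodesic via stability never gets off the ground.

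The paper closes this gap by a different choice of $\mathcal G$, and this is exactly why the \emph{periodic} approximation of Lemma~\ref{L:segments almost extend 2} is needed rather than the ordinary one.  Instead of taking $\mathcal G$ to be an arbitrary $d^\times_{t_u}$--geodesic with the same ideal endpoints as $\mathcal G'$, one takes $\mathcal G$ to be a quasiinvariant $d$--geodesic axis of the element $h\in\pi_1(S)_u$ in $\widetilde X$.  By the definition of $\Hull(u)$ in this setting (the union of all such axes) and Proposition~\ref{P:coarse width}, this axis lies in $\Hull(u)\subset\varphi^{-1}(B_R(t_u))$.  Now for $z,w\in\mathcal G$ the $d$--geodesic between them is a subarc of $\mathcal G$, hence remains inside $\varphi^{-1}(B_R(t_u))$, and therefore $d(z,w)=d^\times_{t_u}(z,w)$.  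With this identity in hand the triangle-inequality computation you outlined goes through verbatim.  A mere biinfinite $d_{t_u}$--geodesic in $\Hull_{t_u}(u)$ would not have supplied such an $h$, which is why Lemma~\ref{L:segments almost extend 2} replaces Lemma~\ref{L:segments almost extend} here.
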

\begin{proof}[Proof of Claim]
Let $z',w' \in \Hull_{t_u}(u)$ be any two points.  
Observe that $\pi_1(S)_u$ is a finitely generated subgroup of the closed surface group $\pi_1(S)$, so it is a convex cocompact Fuchsian group.  
By Lemma \ref{L:segments almost extend 2} there are points $z'_0,w'_0 \in \Hull_{t_u}(u)$ such that
	\begin{equation}\label{E:close in hull t again}
		d_{t_u}(z',z'_0), d_{t_u}(w',w'_0) \leq \delta'
	\end{equation}
and such that the geodesic segment $[z'_0,w'_0]$ extends to a biinfinite periodic $d_{t_u}$--geodesic $\mathcal G' \subset \Hull_{t_u}(u)$ invariant under an element $h\in \pi_1(S)_u$.

Let $\mathcal G$ be a quasiinvariant $d$--geodesic axis for $h$; thus $\mathcal G\subset \Hull(u) \subset \varphi^{-1}(B_{R}(t_u))$. In particular, $\mathcal G$ is a $d^\times_{t_u}$--geodesic.  Since the $d_{t_u}$--geodesic $\mathcal G'$ is a $(K,C')$--quasigeodesic with respect to $d^\times_{t_u}$, the $d_{t_u}^\times$--Hausdorff distance between $\mathcal G$ and $\mathcal G'$ is at most $R'$.

Let $z,w \in \mathcal G$ be points with
	\[ 
		d^\times_{t_u}(z_0',z), d^\times_{t_u}(w_0',w) \leq R'.
	\]
Notice that $d(z,w) = d^\times_{t_u}(z,w)$ because $z,w\in \mathcal G \subset \Hull(u)$.
Appealing to this and \eqref{E:close in hull t again}, we have
	\begin{equation} \label{E:z close in XX} 
		d(z',z)   \leq  d^\times_{t_u}(z',z) \leq  d^\times_{t_u}(z',z_0') + d^\times_{t_u}(z_0',z)  \leq \delta'  + R',
	\end{equation}
since $d$ is less than $d^\times_{t_u}$, 
and likewise
	\begin{equation} \label{E:w close in XX} 
		d(w',w)  \leq  d^\times_{t_u}(w',w) \leq \delta' + R'.
	\end{equation}

Combining \eqref{E:z close in XX} and \eqref{E:w close in XX} with the triangle inequality, the fact that $\Hull_{t_u}(u) \to \varphi^{-1}(B_R(t_u))$ is a $(K,C')$--quasiisometric embedding, and the fact that $d(z,w) = d^\times_{t_u}(z,w)$, we find
	\begin{align*}
		d_{t_u} (z',w') 	& \leq  K d^\times_{t_u}(z',w') + C'\\
					& \leq  K(d^\times_{t_u}(z,w) + d^\times_{t_u}(z',z) + d^\times_{t_u}(w,w')) + C'\\
					& \leq  K d(z,w) + 2K(\delta' + R') + C' \\
					& \leq   K(d(z',w') + d(z,z') + d(w',w)) + 2K(\delta' + R') + C'\\
					& \leq  K d(z',w') + 2K(\delta'  + R') + 2K(\delta' + R') + C'\\
					& =  K d(z',w') + C.
\end{align*}
On the other hand, since $d \leq d^\times_{t_u} \leq d_{t_u}$ on $\Hull_{t_u}(u)$, it follows that
	\[ 
		d(z',w') \leq d^\times_{t_u}(z',w') \leq d_{t_u}(z',w') \leq K d_{t_u}(z',w') + C.
	\]
Therefore, the inclusion $\Hull_{t_u}(u) \to \widetilde X$ is a $(K,C)$--quasiisometric embedding, proving the claim.
\end{proof}
This claim completes the proof of the proposition.
\end{proof}

We have the following corollary which will be used to prove Theorem \ref{T:generalized}.

\begin{corollary} \label{C:close hulls}
Suppose that we are in the situation of Diagram \eqref{E:lots of equivariance} and $\widetilde X$ is $\delta$--hyperbolic.  
Then there exists $D_0 > 0$ such that for any pair of adjacent vertices $u_1,u_2 \in \C^{(0)}(\mathring{S})$, there are points $x' \in \Hull_{t_{u_1}}([u_1,u_2])$ and  $x'' \in \Hull_{t_{u_2}}([u_1,u_2])$ with $d(x',x'') < D_0$.
\end{corollary}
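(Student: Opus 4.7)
My plan is to apply the axis comparison implicit in the proof of Proposition~\ref{P:qi subsurfaces general} twice to a single hyperbolic element of the common stabilizer $H := \pi_1(S)_{[u_1,u_2]}$, and combine the two resulting estimates by the triangle inequality in $(\widetilde X, d)$.

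If $H$ is trivial, both hulls are empty and the conclusion is vacuous, so I may assume $H$ contains a nontrivial element $h$. Since $\pi_1(S)$ is a torsion-free surface group and $\widetilde X$ is $\delta$-hyperbolic, $h$ acts as a hyperbolic isometry on every fiber $\HH^2_t$ and as a loxodromic isometry on $(\widetilde X, d)$ (the latter because infinite-order elements of a hyperbolic group have positive stable translation length). Let $\mathcal{G}_i \subset \HH^2_{t_{u_i}}$ denote the $d_{t_{u_i}}$-axis of $h$, and let $\mathcal{G} \subset \widetilde X$ denote a quasi-invariant $d$-geodesic axis of $h$. Because $h \in H \subset \pi_1(S)_{u_i}$, we have $\mathcal{G}_i \subset \Hull_{t_{u_i}}([u_1,u_2])$ and $\mathcal{G} \subset \Hull([u_1,u_2]) \subset \Hull(u_i)$ for each $i = 1,2$.

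Next I would invoke the intermediate estimate established in the proof of Proposition~\ref{P:qi subsurfaces general}: whenever $h \in \pi_1(S)_u$ is hyperbolic with quasi-axis $\mathcal{G} \subset \Hull(u)$ and $d_{t_u}$-axis $\mathcal{G}'$ in $\HH^2_{t_u}$, one has $d^\times_{t_u}$-Hausdorff distance at most the stability constant $R'$ between $\mathcal{G}$ and $\mathcal{G}'$ inside the Gromov-hyperbolic preimage $\varphi^{-1}(B_{W+1}(t_u))$. I would then apply this with $u = u_i$ for each $i = 1,2$. This is legitimate since $h \in \pi_1(S)_{u_i}$ and $\mathcal{G} \subset \Hull(u_i) \subset \varphi^{-1}(B_{W+1}(t_{u_i}))$, where the containment in the preimage follows from Proposition~\ref{P:coarse width} together with the defining property of $t_{u_i}$ (chosen within distance $1$ of $\varphi(\Hull(u_i))$). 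Because $d \leq d^\times_{t_{u_i}}$ on the preimage, I conclude that $\mathcal{G}_i$ and $\mathcal{G}$ are within $d$-Hausdorff distance $R'$ for each $i$.

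The triangle inequality then gives $d$-Hausdorff distance at most $2R'$ between $\mathcal{G}_1$ and $\mathcal{G}_2$, so any $x' \in \mathcal{G}_1 \subset \Hull_{t_{u_1}}([u_1,u_2])$ admits $x'' \in \mathcal{G}_2 \subset \Hull_{t_{u_2}}([u_1,u_2])$ with $d(x',x'') \leq 2R'$, completing the proof with, say, $D_0 = 2R' + 1$. The main technical point is that $R'$ must be chosen uniformly in $u_1, u_2$; as in Proposition~\ref{P:qi subsurfaces general}, this follows from the transitivity of the $\Theta$-action on $\widetilde B^{(0)}$, which ensures that the hyperbolicity constants of $\varphi^{-1}(B_{W+1}(t))$ are independent of the vertex $t$.
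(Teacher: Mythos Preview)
Your proof is correct and follows essentially the same approach as the paper: pick a nontrivial $h\in\pi_1(S)_{[u_1,u_2]}$, compare its $d_{t_{u_i}}$--axis in each hull to its quasiinvariant $d$--axis in $\widetilde X$, and use the triangle inequality. The only difference is cosmetic: the paper invokes the \emph{conclusion} of Proposition~\ref{P:qi subsurfaces general} (that $\Hull_{t_{u_i}}([u_1,u_2])\subset\Hull_{t_{u_i}}(u_i)$ is $(K,C)$--quasiisometrically embedded in $\widetilde X$) together with stability of quasigeodesics in $\widetilde X$ itself, whereas you re-enter the proof of that proposition and work inside $\varphi^{-1}(B_{W+1}(t_{u_i}))$ with the metric $d^\times_{t_{u_i}}$ to extract the constant $R'$; the paper's route is slightly cleaner but the content is the same.
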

\begin{proof} 
First observe that the inclusions
	\[ 
		\Hull_{t_{u_1}}([u_1,u_2]) \subset \Hull_{t_{u_1}}(u_1) 
			\mbox{\quad and \quad} \Hull_{t_{u_2}}([u_1,u_2]) \subset \Hull_{t_{u_2}}(u_2) 
	\]
are isometric embeddings, and hence $\Hull_{t_{u_1}}([u_1,u_2])$ and $\Hull_{t_{u_2}}([u_1,u_2])$ are $(K,C)$--quasiisometrically embedded in $\widetilde X$.  
A quasiinvariant $d$--geodesic axis for any element of $\pi_1(S)_{[u_1,u_2]}$ is contained in a uniformly bounded neighborhood of each of $\Hull_{t_{u_1}}([u_1,u_2])$ and $\Hull_{t_{u_2}}([u_1,u_2])$, and hence there are points in these hulls within some uniform distance $D_0 > 0$ of each other.
\end{proof}

Let $G_0 = G \cap \pi_1(S)$ and $\hat G < \Theta$ denote the image of $G$ under the homomorphism $\Gamma \to \Theta$.  
Denote the quotients of $\widetilde X$ by the actions of $G_0$, $G$, and $\pi_1(S)$ by $X_{G_0}$, $X_G$, and $X_S$, respectively. 
Denote the quotient of $\HH^2$ by the action of $G_0$ by $S_{G_0}$, and the quotient of $\widetilde B$ by the action of $\hat G$ by $B_G$.
We arrange all these quotient maps and all previous maps into the following diagram, labeling those we will need to refer to explicitly.
	\begin{equation}\label{E:bigdiagramgeneral}
		\begin{tikzpicture}[>= to, line width = .075em, 
			baseline=(current bounding box.center),
			descr/.style={fill=white},
			cross line/.style={preaction={draw=white, -, line width=6pt}}
			]
		\matrix (m) [matrix of math nodes, column sep=1.5em, row sep = 2em, 		text height=1.5ex, text depth=0.25ex]
		{
			& \HH^2 
			& S_{G_0} 
			& 
			& 
				\\
			& 
			& S 
			& 
			& 
				\\
			\HH^2 \times \widetilde{B}
			& S_{G_0} \times \widetilde{B}
			& S \times  \widetilde{B}
			& X_G 
			& 
				\\	
			\widetilde{X} 
			& 
			& 
			& 
			& 
				\\			
			& X_{G_0}  
			& X_S 
			&   
			&  X 
				\\
			& & & & \\
			& \widetilde{B}
			&                                     
			& B_G
			& B 
				\\					
		};
		\draw[double distance = .15em,font=\scriptsize]
		(m-3-1) --	(m-4-1)
		(m-3-2) --	(m-5-2)
		(m-3-3) --	(m-5-3)
		;
		\path[->,font=\scriptsize]
		(m-1-2) 	edge		(m-1-3)
				edge		(m-3-1.north)	
		(m-1-3)	edge		(m-2-3)
				edge		(m-3-2.north)
				edge		(m-3-4)
		(m-2-3)	edge		(m-3-3)
		(m-3-1)	edge		(m-3-2)
		(m-3-2)	edge		(m-3-3)
		
		(m-5-3) 	edge		(m-5-5)
				
		(m-3-4)	edge [-,line width=.5em,draw=white] 				(m-7-4)
		(m-4-1) 	edge [-,line width=.5em,draw=white,bend right = 8] 	(m-3-4)
		(m-5-2) 	edge [-,line width=.5em,draw=white] 				(m-3-4)
		(m-3-4)	edge		node[descr,near start] {$f$}		(m-7-4)
		(m-4-1) 	edge [bend right = 8]	node[descr] {$p$}		(m-3-4)
				edge			node[descr,midway] {$p_1$}	(m-5-2)
				edge									(m-7-2)
		(m-5-2) 	edge		node[descr,near start] {$p_0$}		(m-3-4)
				edge		node[descr] {$\widetilde{f}$}			(m-7-2)
				edge									(m-5-3)
		(m-5-3)	edge									(m-7-2)
		(m-5-5)	edge									(m-7-5)
		(m-7-4)	edge									(m-7-5)
		(m-7-2)	edge		node[descr] {$q$}				(m-7-4)

		;		
		\end{tikzpicture}
	\end{equation}

The fact that each of the spaces $\widetilde X$, $X_{G_0}$, and $X_S$ are products follows from the fact that $X_S$ is a product, which in turn follows from the fact that the quotient of $\T(\mathring{S})$ by $\pi_1(S)$ is an $S$--bundle over the contractible space $\T(S)$.  
We also note that $X_{G_0}$ and $X_G$ are $S_{G_0}$--bundles over $\widetilde B$ and $B_G$, respectively, with the latter bundle the quotient of the former by the action of $G/G_0 = \hat G$ by bundle transformations.

For any vertex $t \in \widetilde B^{(0)}$ we let $\HH_t^2$, $S_{G_0,t}$, and $S_t$ denote the fibers over $t$ in $\widetilde X$, $X_{G_0}$, and $X_S$, respectively.  
We equip these with their induced path metrics, all of which we denote $d_t$.  
Similarly, given $v \in B_G^{(0)}$, let $S_{G_0,v}$ denote the fiber over $v$ in $X_G$ with its path metric $d_v$. 
 Observe that if $q(t) = v$, then the restriction of $p_0$ to $(S_{G_0,t},d_t)$ is an isometry to $(S_{G_0,v},d_v)$.

\begin{lemma} \label{L:bounded intersection general}
Suppose that we are in the situation of Diagram \eqref{E:lots of equivariance}, the space $\widetilde X$ is $\delta$--hyperbolic,  and $G < \Gamma$.  
Given $x \in \widetilde X$ and $R > 0$ there exists $D' > 0$ with the following property.  
If $u$ is a simplex of $\C(\mathring{S})$, then any $d_{t_u}$--geodesic segment
	\[ 
		[z,w]_{t_u} \subset N_R(G \cdot x) \cap \Hull_{t_u}(u) 
	\]
has length at most $D'$.
\end{lemma}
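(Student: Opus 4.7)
The plan is to mimic the proof of Lemma \ref{L:bounded diameter 2}, with $N_R(G\cdot x)$ playing the role of the convex core $\Hull(G)$ and the $\hat G = G/G_0$ action on $\widetilde B$ replacing the $\ZZ$-translation used there. The main new feature is that the base $B_G$ need not be compact, but the compact closed ball $N_R(G\cdot x)/G$ in $X_G$ still reduces everything to finitely many fiber types.

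First I would establish the core compactness. Since $G$ is finitely generated and acts properly by isometries on the proper space $\widetilde X$, the quotient $N_R(G\cdot x)/G$ is the image of the compact ball $B_R(x) \subset \widetilde X$ under the quotient map (each orbit in $N_R(G\cdot x)$ meets $B_R(x)$), so it is compact. Its image under $f\co X_G \to B_G$ meets only finitely many vertices $v_1,\dotsc,v_k$ of the locally finite graph $B_G$. Fix a lift $\tilde v_i \in q^{-1}(v_i)$ for each $i$. Given any nonempty geodesic segment $[z,w]_{t_u}$ as in the statement, one has $q(t_u) = v_i$ for some $i$; since $\hat G$ acts transitively on $q^{-1}(v_i)$ and $G$ surjects onto $\hat G$, some $g\in G$ satisfies $g\cdot t_u = \tilde v_i$. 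Translating by $g$ preserves $N_R(G\cdot x)$ and carries hulls to hulls via the equivariance $t_{g\cdot u} = g \cdot t_u$, so we may reduce to proving the length bound in the case $t_u = \tilde v_i$ for one fixed $i$.

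Next I would slice to the fiber and extract a finitely generated Fuchsian subgroup. Because $G_0\subset\pi_1(S)$ acts trivially on $\widetilde B$, the set $N_R(G\cdot x) \cap \HH^2_{\tilde v_i}$ is $G_0$-invariant; since $\hat G$ acts freely on $\widetilde B^{(0)}$ (so $S_{G_0,\tilde v_i} \cong F_{v_i}$ isometrically), its $G_0$-quotient embeds onto the compact set $F_{v_i} \cap N_R(G\cdot x)/G$. Hence this quotient has finitely many components $X_{0,1},\dotsc,X_{0,k_i}$, each compact. Picking a preimage component $\widetilde X_{0,j} \subset \HH^2_{\tilde v_i}$, its $G_0$-stabilizer $G_{1,j}$ is finitely generated (as $X_{0,j}$ is compact) and acts cocompactly on $\widetilde X_{0,j}$. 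A further translation by an element of $G_0$ (which fixes the fiber and preserves $N_R(G\cdot x)$) places $[z,w]_{\tilde v_i}$ inside $\widetilde X_{0,j} \cap \Hull_{\tilde v_i}(u)$ for one of the finitely many pairs $(i,j)$.

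To conclude, I would invoke the Fuchsian diameter bound: $G_{1,j}$ is a finitely generated subgroup of $\pi_1(S)$ that is purely pseudo-Anosov in $\Mod(\mathring{S})$ (as a subgroup of $G$), so by Theorem \ref{T:kra} and Theorem \ref{T:Fuchsianccc} it is convex cocompact as a Fuchsian group in $\HH^2_{\tilde v_i}$. Cocompactness of $G_{1,j}$ on $\widetilde X_{0,j}$ gives some $r_{j,i}>0$ with $\widetilde X_{0,j} \subset N_{r_{j,i}}(\Hull_{\tilde v_i}(G_{1,j}))$, and the argument behind Proposition \ref{P:bounded diameter fuchsian} (which works equally well for $N_{r_{j,i}}$ in place of $N_1$) bounds the $d_{\tilde v_i}$-diameter of $\widetilde X_{0,j} \cap \Hull_{\tilde v_i}(u)$ by some $D'_{j,i}$. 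Taking $D' = \max_{i,j} D'_{j,i}$ finishes the proof. The main obstacle relative to Lemma \ref{L:bounded diameter 2} is the non-compactness of $B_G$, handled by the fact that the compact set $N_R(G\cdot x)/G$ projects to a finite-vertex subset of $B_G$; the rest is a direct transcription of the $\RR$-fibered argument into the more general bundle setting.
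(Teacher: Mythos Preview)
Your approach is essentially the paper's: reduce to finitely many fiber types via compactness of $N_R(G\cdot x)/G$ in $X_G$, then within each fiber pass to a finitely generated purely pseudo-Anosov Fuchsian subgroup and invoke Proposition~\ref{P:bounded diameter fuchsian}.

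There is one soft spot. You assert that the stabilizer $G_{1,j}$ of a component $\widetilde X_{0,j}$ is finitely generated ``as $X_{0,j}$ is compact,'' but compactness of an arbitrary closed subset of a surface does not by itself yield finite generation of the deck group: the set $X_{0,j}$ need not be a submanifold or CW complex, so neither the \v{S}varc--Milnor lemma nor a direct $\pi_1$ argument applies cleanly. The paper sidesteps this by enlarging: for each vertex $t_i$ it chooses a single compact connected $\pi_1$--injective subsurface $\Sigma_i \subset S_{G_0,t_i}$ containing the entire fiber slice $S_{G_0,t_i} \cap p_1(N_R(G\cdot x))$, and sets $G_i = \pi_1(\Sigma_i) < G_0$. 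This $G_i$ is automatically finitely generated, and every segment in question lies in a $G_0$--translate of the preimage component $\widetilde\Sigma_i \subset N_{r_i}(\Hull_{t_i}(G_i))$. Your component--by--component decomposition then becomes unnecessary; one subsurface per vertex suffices and cleanly delivers the finitely generated group needed to invoke Proposition~\ref{P:bounded diameter fuchsian}.
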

\begin{proof}  The reader may wish to refer to Diagram (\ref{E:bigdiagramgeneral}) throughout the proof.

For any $x \in \widetilde X$ and $R > 0$ we consider the closed $R$--neighborhood $N_R(G \cdot x)$ of the $G$--orbit of $x$.  
Observe that $p(G \cdot x) = p(x)$, and so $p(N_R(G \cdot x)) = B_R(p(x))$, the closed ball of radius $R$ about $p(x)$, which is compact.
Since $f$ is continuous, it follows that $f(B_R(p(x))) \subset B_G$ is compact, and hence contains only finitely many vertices
	\[ 
		V =  \{ v_1,\ldots,v_n \} =  f(B_R(p(x))) \cap B_G^{(0)} .
	\]
Also, for each $i = 1,\ldots,n$, pick $t_i$ with $q(t_i) = v_i$ and set
	\[ 
	T = \{t_1,\ldots,t_n \} \subset \widetilde B^{(0)}
	\]
so that $G \cdot T = q^{-1}(V)$ (where $G$ is acting on $T \subset \widetilde B$ by the quotient $G \to \hat G$).
Then, given any $t \in \widetilde B^{(0)}$ we have
	\begin{equation}\label{E:nonempty intersection condition} 
	\begin{aligned} 
			\HH_t^2 \cap  N_R(G \cdot x) \neq \emptyset 	
				& \quad \Leftrightarrow \quad S_{G_0,t} \cap p_1(N_R(G \cdot x))  \neq \emptyset 
				 \\
				&   \quad \Leftrightarrow \quad  \exists \, g \in G \mbox{ such that } g \cdot t \in T . 
 	\end{aligned} 
	\end{equation}

For each $i = 1,\ldots,n$, the map $p_0$ takes $(S_{G_0,t_i},d_{t_i})$ isometrically to $(S_{G_0,v_i},d_{v_i})$, and restricts to a homeomorphism on the intersections
	\[
		\begin{tikzpicture}[>= to, line width = .075em, 
			baseline=(current bounding box.center)]
		\matrix (m) [matrix of math nodes, column sep=1.5em, row sep = 1em, 		text height=1.5ex, text depth=0.25ex]
		{
			S_{G_0,t_i} \cap p_1(N_R(G \cdot x))
			& & S_{G_0,v_i} \cap B_R(p(x)). 						\\
		};
		\path[->,font=\scriptsize]
		(m-1-1) edge 	node[below]{$p_0$}	node[above]{$\cong$}	(m-1-3)
		;
		\end{tikzpicture}
	\]
Since the target of this restriction is compact, so is the domain.  So there is a compact connected subsurface $\Sigma_i \subset S_{G_0,t_i}$ with
	\[ 
	S_{G_0,t_i} \cap p_1(N_R(G \cdot x)) \subset \Sigma_i. 
	\]
We may assume $\Sigma_i$ is $\pi_1$--injective, and we let $G_i = \pi_1(\Sigma_i) < G_0$ be the \textit{finitely generated} image.

Let $\widetilde \Sigma_i$ denote the component of $p_1^{-1}(\Sigma_i) \subset \HH_{t_i}^2$ stabilized by $G_i$ and let $r_i > 0$ be such that
	\[ 
		\widetilde \Sigma_i \subset N_{r_i}(\Hull_{t_i}(G_i)). 
	\]
Observe that any geodesic segment $[z,w]_{t_i} \subset \HH_{t_i}^2 \cap N_R(G \cdot x)$ projects by $p_1$ to be contained in $S_{G_0,t_i} \cap p_1(N_R(G \cdot x)) \subset \Sigma_i$.  
Therefore $[z,w]_{t_i}$ is contained in a $G_0$ translate of $\widetilde \Sigma_i$, and hence a $G_0$--translate of $N_{r_i}(\Hull_{t_i}(G_i))$.

Now, let $u \subset \C(\mathring{S})$ denote any simplex.  Given any geodesic segment in the intersection
	\[ 
		[z,w]_{t_u} \subset \Hull_{t_u}(u) \cap N_R(G \cdot x), 
	\]
we may apply an element $g \in G$ with $g \cdot t_u = t_i$ to this, by (\ref{E:nonempty intersection condition}).  
By Proposition \ref{P:qi subsurfaces general}, we have $g \cdot t_u = t_{g \cdot u}$, and so
	\[ 
	[g \cdot z,g \cdot w]_{t_{g \cdot u}} 
		= [g \cdot z, g \cdot w]_{t_i}  
		\subset \Hull_{t_i}(g \cdot u) \cap N_R(G \cdot x) 
		= \Hull_{t_{g \cdot u}}(g \cdot u) \cap N_R(G \cdot x).
	\]
Since $g$ acts by isometries, it follows that
	\[ 
		\diam([z,w]_{t_u}) = \diam([g \cdot z,g\cdot w]_{t_{g \cdot u}}).
	\]
So it suffices to prove the lemma for segments $[z,w]_{t_u}$ where $t_u  = t_i$ for some $i$.  
As noted above, all such segments are contained in a $G_0$ translate of $N_{r_i}(\Hull_{t_i}(G_i))$.  
Therefore, appealing to Proposition \ref{P:qi subsurfaces general} again, it suffices to prove the lemma for segments
	\[ 
	[z,w]_{t_i} \subset N_{r_i}(\Hull_{t_i}(G_i)) \cap \Hull_{t_u}(u) 
	\]
where $t_u = t_i$.

By Proposition~\ref{P:bounded diameter fuchsian}, we again see that there exists a $D_i$ which bounds the length of such a segment, depending on $G_i$ and $r_i$, but not $u$.
Setting $D' = \max \{D_1,\ldots,D_n\}$ completes the proof.
\end{proof}

Given a subset $Y \subset \widetilde X$, let $\pi \co \widetilde X \to Y$ denote a closest point projection map.  
The following is a consequence of quasiconvexity and hyperbolicity.

\begin{lemma} \label{L:lipschitz projection}
Suppose that we are in the situation of Diagram \eqref{E:lots of equivariance}, the space $\widetilde X$ is $\delta$--hyperbolic, and $Y \subset \widetilde X$ an $A$--quasiconvex subset.  
There is a $\lambda > 0$ such that $\pi$ is $(\lambda,\lambda)$--coarsely Lipschitz.

Moreover, given $K,C > 0$ there exists $R_0 >0$ such that for any $R> R_0$ and any $(K,C)$--quasigeodesic $\gamma \subset \widetilde X$, 
	\[
		\diam(\pi(\gamma)) \leq \diam((\gamma \cap N_R(Y))_0) + R_0
	\]
where $(\gamma \cap N_R(Y))_0$ is the longest segment of $\gamma$ contained in $N_R(Y)$.
\end{lemma}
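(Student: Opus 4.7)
I would attack the two claims in succession.

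For the first claim, that $\pi$ is coarsely Lipschitz, fix $x,y \in \widetilde X$ with projections $p = \pi(x)$ and $q = \pi(y)$ and consider the geodesic quadrilateral on $x,p,q,y$. A key initial observation is that any point $z$ on the geodesic $[x,p]$ satisfies $\pi(z) = p$: indeed, $d(x,z) + d(z,p) = d(x,p) = d(x,Y)$ forces $d(z,Y) = d(z,p)$. Now apply $2\delta$--thinness of the quadrilateral to decompose $[p,q]$ into three pieces according to which other side each point is nearest. A point of $[p,q]$ close to $[x,p]$ is close (within $2\delta + A$) to a point whose projection is $p$ and is itself close to $Y$ (by quasiconvexity of $Y$, which places $[p,q] \subset N_A(Y)$), so must itself be close to $p$; this bounds the diameter of this piece by a constant depending only on $\delta$ and $A$, and symmetrically for the piece close to $[y,q]$. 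The remaining piece, of points close to $[x,y]$, has diameter at most $d(x,y) + O(\delta)$. Summing gives $d(p,q) \leq d(x,y) + C_0$ for a constant $C_0 = C_0(\delta,A)$, and setting $\lambda = \max(1,C_0)$ yields the $(\lambda,\lambda)$--coarsely Lipschitz bound.

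For the second claim, I would take $x,y \in \gamma$ with $d(\pi(x),\pi(y)) \geq \diam \pi(\gamma) - 1$, and again study the geodesic quadrilateral on $x,p,q,y$, where now $p = \pi(x)$ and $q = \pi(y)$. The main technical input is the standard fact that when $p = \pi(x)$ and $q \in Y$, the Gromov product $(x|q)_p$ is bounded by a constant depending only on $\delta$ and $A$---equivalently, $p$ lies uniformly close to the geodesic $[x,q]$ and $d(x,q) \geq d(x,p) + d(p,q) - B$ for some $B = B(\delta,A)$. Triangulating the quadrilateral by the diagonal $[x,q]$ and applying $\delta$--thinness twice, one sees that $[x,y]$ contains a single subsegment $B_2$ of length at least $d(p,q) - C_1$ lying within $2\delta$ of $[p,q] \subset N_A(Y)$, hence in $N_{A + 2\delta}(Y)$. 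By the stability of quasigeodesics (Theorem~\ref{T:QIstability}), the sub-arc $\gamma_0$ of $\gamma_{xy}$ paralleling $B_2$ lies in $N_{A + 2\delta + R_1}(Y)$, where $R_1 = R_1(K,C,\delta)$ is the stability constant, and its endpoints are within $R_1$ of those of $B_2$, giving $\diam \gamma_0 \geq d(p,q) - C_2$ for some $C_2 = C_2(\delta,A,K,C)$. Setting $R_0 = A + 2\delta + R_1 + C_2 + 1$, the sub-arc $\gamma_0$ lies in $N_R(Y)$ for every $R > R_0$; being a connected subset of $\gamma \cap N_R(Y)$, it is contained in a single component, so the longest segment satisfies $\diam(\gamma \cap N_R(Y))_0 \geq \diam \gamma_0 \geq \diam \pi(\gamma) - R_0$.

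The main obstacle will be establishing the Gromov-product estimate for the projection and extracting the single-subsegment conclusion from the thin-quadrilateral decomposition; both are standard consequences of hyperbolicity and quasiconvexity, but the bookkeeping of the constants $\delta,A,K,C$ through the decomposition and the application of stability requires care. A secondary delicate point is verifying that the sub-arc $\gamma_0$ really has diameter comparable to the length of $B_2$, which relies on both endpoints of $\gamma_0$ being boundedly close to the corresponding endpoints of $B_2$ in $\widetilde X$. Once these are in hand, the remainder of the argument reduces to straightforward triangle-inequality estimates.
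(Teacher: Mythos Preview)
Your proposal is correct and follows essentially the same approach as the paper. The paper's proof is a two-sentence sketch: it declares the first part well known, and for the second part asserts that a quasigeodesic $[z,w]$ decomposes into three subsegments $[z,z'][z',w'][w',w]$ with the middle one uniformly close to $Y$ and the outer two uniformly close to the geodesics $[z,\pi(z)]$ and $[w,\pi(w)]$ (hence of bounded projection). Your argument is a fleshed-out version of exactly this idea, obtained by analyzing the thin quadrilateral on $x,\pi(x),\pi(y),y$, invoking the Gromov-product bound at the projections, and then transferring from the geodesic $[x,y]$ back to $\gamma$ via stability; the only cosmetic difference is that you locate the middle subsegment by first finding nearby points $p',q'$ on $[x,y]$ close to $\pi(x),\pi(y)$, whereas the paper phrases it as a direct decomposition of $\gamma$. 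One small correction: the neighborhood in which your subsegment $B_2$ lies should be $N_{2\delta+\max(A,\,B+2\delta)}(Y)$ rather than $N_{A+2\delta}(Y)$, since points of $[p',q']$ may be $2\delta$--close to the short sides $[p',p]$ or $[q',q]$ of the inner quadrilateral; this is absorbed into $R_0$ and does not affect the argument.
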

\begin{proof}
The first part is well--known.   
To prove the second part, we observe that a quasigeodesic $\gamma = [z,w]$ can be decomposed into three segments $[z,w] = [z,z'][z',w'][w',w]$ (some of which may be empty), where $[z',w']$ remains a bounded distance from $Y$ and $[z,z']$ remains a bounded distance from a geodesic joining $z$ to $\pi(z)$ (and therefore has uniformly bounded projection diameter).
Similarly, the path $[w',w]$ remains a bounded distance from a geodesic joining $\pi(w)$ to $w$.
\end{proof}

The next proposition follows by simply assembling the results above.
\begin{proposition} \label{P:bounded diameter projection general}  
Suppose that we are in the situation of Diagram \eqref{E:lots of equivariance}, the space $\widetilde X$ is $\delta$--hyperbolic,  and $G < \Gamma$ quasiisometrically embedded.  Given $x \in \widetilde X$ and letting $\pi \co \widetilde X \to G \cdot x$ denote a closest point projection map, there exists $D > 0$ with the following property.  
Given any simplex $u \subset \C(\mathring{S})$ and $t_u$--geodesic segment $[z,w]_{t_u} \subset \Hull_{t_u}(u)$ we have $\diam(\pi([z,w]_{t_u})) < D$.
\end{proposition}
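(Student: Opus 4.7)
The plan is to assemble the three main ingredients developed above: the quasiisometric embedding $\Hull_{t_u}(u) \to \widetilde X$ from Proposition~\ref{P:qi subsurfaces general}, the bounded intersection Lemma~\ref{L:bounded intersection general}, and the projection Lemma~\ref{L:lipschitz projection}. The overall strategy parallels the final step in the proof of Proposition~\ref{P:bounded diameter 1}, where a bound on $d_{t_u}$--lengths of segments of $\Hull_{t_u}(u)$ inside a fixed neighborhood of $\Hull(G)$ was converted, via the hyperbolic geometry of $\widetilde X$, into a bound on the projection diameter.

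First I would verify that $G \cdot x$ is quasiconvex in $\widetilde X$. Since the orbit map $\Gamma \to \widetilde X$ is a quasiisometry by the \v{S}varc--Milnor lemma (Theorem~\ref{T:svarc-milnor}) and $G < \Gamma$ is quasiisometrically embedded by hypothesis, the composition gives that $G \cdot x$ is a quasiisometrically embedded subset of the $\delta$--hyperbolic space $\widetilde X$. Stability of quasigeodesics (Theorem~\ref{T:QIstability}) then implies that $G \cdot x$ is $A$--quasiconvex for some $A > 0$, so Lemma~\ref{L:lipschitz projection} applies to the closest point projection $\pi\co \widetilde X \to G \cdot x$.

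Next, let $K,C > 0$ be the constants from Proposition~\ref{P:qi subsurfaces general}. Feed these into Lemma~\ref{L:lipschitz projection} to obtain a constant $R_0 > 0$ governing projections of $(K,C)$--quasigeodesics. Fix any $R > R_0$ and then invoke Lemma~\ref{L:bounded intersection general} with this $R$ to produce a constant $D' > 0$ bounding the $d_{t_u}$--length of any $d_{t_u}$--geodesic segment contained in $N_R(G \cdot x) \cap \Hull_{t_u}(u)$, uniformly in the simplex $u$. Set
	\[
		D = KD' + C + R_0.
	\]

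Now let $u \subset \C(\mathring{S})$ be any simplex and let $[z,w]_{t_u} \subset \Hull_{t_u}(u)$ be a $d_{t_u}$--geodesic. By Proposition~\ref{P:qi subsurfaces general}, this segment, viewed as a path in $\widetilde X$, is a $(K,C)$--quasigeodesic. Applying Lemma~\ref{L:lipschitz projection} yields
	\[
		\diam\!\bigl(\pi([z,w]_{t_u})\bigr) \leq \diam\!\bigl(([z,w]_{t_u} \cap N_R(G \cdot x))_0\bigr) + R_0,
	\]
where the first diameter on the right is the $d$--diameter of the longest subsegment $\sigma$ of $[z,w]_{t_u}$ lying in $N_R(G \cdot x)$. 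Such a $\sigma$ is itself a $d_{t_u}$--geodesic segment contained in $N_R(G \cdot x) \cap \Hull_{t_u}(u)$, so Lemma~\ref{L:bounded intersection general} bounds its $d_{t_u}$--length by $D'$. For any two points in $\sigma$, their $d_{t_u}$--distance is therefore at most $D'$, and Proposition~\ref{P:qi subsurfaces general} converts this to a $d$--distance of at most $KD' + C$. Hence the $d$--diameter of $\sigma$ is at most $KD' + C$, and combining the two displayed estimates gives $\diam(\pi([z,w]_{t_u})) \leq D$, as required. The only substantive step is the bookkeeping of constants in the correct order, all the hard work having already been done in Proposition~\ref{P:qi subsurfaces general} and Lemma~\ref{L:bounded intersection general}.
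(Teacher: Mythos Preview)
Your proof is correct and follows essentially the same route as the paper: establish quasiconvexity of $G\cdot x$, take $K,C$ from Proposition~\ref{P:qi subsurfaces general}, obtain $R_0$ from Lemma~\ref{L:lipschitz projection}, fix $R\ge R_0$, obtain $D'$ from Lemma~\ref{L:bounded intersection general}, and combine. The only minor difference is that the paper sets $D=D'+R_0$, implicitly using that $d\le d_{t_u}$ on $\Hull_{t_u}(u)$ (so the $d$--diameter of the intersection segment is bounded by its $d_{t_u}$--length $D'$), whereas you convert via the $(K,C)$--quasiisometric embedding and obtain the slightly larger constant $D=KD'+C+R_0$; this is harmless.
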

\begin{proof}
Let $K,C$ be as in Proposition \ref{P:qi subsurfaces general}.  Since $G < \Gamma$ is quasiisometrically embedded, $G \cdot x$ is also, and hence is $A$--quasiconvex for some $A >0$.  
Let $R_0 > 0$ be given by Lemma \ref{L:lipschitz projection}, and fix $R \geq R_0$.  
Finally,  let $D' > 0$ be as given by Lemma \ref{L:bounded intersection general} and set $D = D' + R_0$.  

Now $[z,w]_{t_u} \subset \Hull_{t_u}(u)$ is $(K,C)$--quasigeodesic by Proposition \ref{P:qi subsurfaces general}.  By Lemma \ref{L:bounded intersection general} and the second part of Lemma \ref{L:lipschitz projection}, it follows that 
	\[ 
		\diam(\pi([z,w]_{t_u})) \leq \diam(([z,w]_{t_u} \cap N_R(G \cdot x))_0) + R_0  \leq D' + R_0 = D,
	\]
where $([z,w]_{t_u} \cap N_R(G \cdot x))_0$ is the longest segment in the intersection
	\[ 
		[z,w]_{t_u} \cap N_R(G \cdot x)  \subset \Hull_{t_u}(u) \cap N_R(G \cdot x).
	\]
This completes the proof.
\end{proof}

\begin{proof} [Proof of Theorem \ref{T:generalized}]
This now follows a similar outline to the proof of Theorem \ref{T:main}.

We choose any vertex $u \in \C(\mathring{S})$ and let $x \in \Hull_{t_u}(u) \subset \widetilde X$ be any point. 
Let $D > 0$ be as in Proposition \ref{P:bounded diameter projection general}.
Since $G < \Gamma$  is quasiisometrically embedded, distances in $G$ are comparable to those in $G \cdot x \subset \widetilde X$.  As in the proof of Theorem \ref{T:main}, it suffices to prove that there exist constants $K_0,C_0>$ such that 
	\[ 
		d(x,g \cdot x) \leq K_0 d_\C(u,g \cdot u) + C_0.
	\]
Let $D_0  > 0$ be as in Corollary \ref{C:close hulls} and $\lambda > 0$ as in Lemma \ref{L:lipschitz projection}.  
We claim that $C_0 = \max \{ \lambda(D_0+ 1),D \}$ and $K_0 = 2C_0$ suffices.\\

Let $u = u_0,\ldots,u_n = g \cdot u$ denote the vertices of a geodesic $[u,g \cdot u] \subset \C(\mathring{S})$ connecting $u$ to $g \cdot u$, so that $n = d_\C(u,g \cdot u)$.
For each $1 \leq i \leq n$, choose points
	\[ 
		x_i' \in \Hull_{t_{u_{i-1}}}([u_{i-1},u_i]) \mbox{ and } x_i'' \in \Hull_{t_{u_i}}([u_{i-1},u_i]) 
	\]
which are a distance $D_0$ apart, which is possible by Corollary \ref{C:close hulls}.

Now consider the path $\gamma$ connecting $x$ and $g \cdot x$  given by
	\[ 
	\gamma = [x,x_1']_{t_{u_0}} [x_1',x_1''] [x_1'',x_2']_{t_{u_1}} [x_2',x_2''] \cdots [x_{n-1}'',x_n']_{t_{u_{n-1}}} [x_n',x_n''][x_n'',g \cdot x]_{t_{u_n}}. 
	\]
Here $[z,w]$ denotes a $d$--geodesic from $z$ to $w$ in $\widetilde X$ and $[z,w]_t$ denotes a $d_t$--geodesic from $z$ to $w$ in $\HH^2_t$.  Since
	\[ 
	x_i'' \in \Hull_{t_{u_i}}([u_{i-1},u_i]) \subset \Hull_{t_{u_i}}(u_i)
	\]
and
	\[ 
	x_{i+1}' \in \Hull_{t_{u_i}}([u_i,u_{i+1}]) \subset \Hull_{t_{u_i}}(u_i), 
	\]
it follows that $[x_i'',x_{i+1}']_{t_{u_i}} \subset \Hull_{t_{u_i}}(u_i)$ for every $1 \leq i \leq n-1$.  In particular, the path $\gamma$ alternates between geodesic segments in hulls $\Hull_{t_{u_i}}(u_i)$ and segments of the form $[x_i',x_i'']$ (note that $x,x_1' \in \Hull_{t_{0}}(u_0)$ and $x_n'',g \cdot x  \in \Hull_{t_{u_n}}(u_n)$).

Let $\pi \co \widetilde X \to G \cdot x$ denote a closest point projection.
By Proposition \ref{P:bounded diameter projection general}, for every $i = 0,\ldots,n-1$ we have
	\[ 
		d(\pi(x_i''),\pi(x_{i+1}')) \leq \diam( \pi([x_i'',x_{i+1}']_{t_{u_i}})) \leq D 
	\]
and
	\[ 
		d(x,\pi(x_1')), d(\pi(x_n''),g \cdot x) \leq D.
	\]
Since $\pi$ is $(\lambda,\lambda)$--coarsely Lipschitz, we also have
	\[ 
		d(\pi(x_i'),\pi(x_i'')) \leq \lambda d(x_i',x_i'') + \lambda \leq \lambda (D_0 + 1). 
	\]

Therefore, since $C_0 = \max \{ \lambda(D_0 + 1),D \}$, $K_0 = 2C_0$, and $d_\C(u, g \cdot u) = n$ we have
	\begin{align*} 
		d(x,g \cdot x) 	& \leq d(x,\pi(x_1')) 
						 +  \sum_{i=1}^n d(\pi(x_i'),\pi(x_i'')) \\
					 &   \quad \quad \quad \quad \quad \ \ \
					  	+ \sum_{i=1}^{n-1} d(\pi(x_i''),\pi(x_{i+1}'))  + d(\pi(x_n''),g \cdot x) \\
					 & \leq  (n+1)D + n (\lambda(D_0 + 1)) \\
					 & \leq  (2n+1)C_0\\
					 & =  (2d_\C(u,g \cdot u) + 1)C_0\\
					 & =  K_0 d_\C(u,g \cdot u) + C_0
	\end{align*}
as required.
\end{proof}

\section{On a theorem of Scott and Swarup}\label{S:Scott.Swarup}

We now prove our generalization of Scott and Swarup's Theorem \cite{Scott.Swarup.1990}.
\begin{ScottSwarupGeneralized}
Let 
	\[
		\begin{tikzpicture}[>= to, line width = .075em, 
			baseline=(current bounding box.center)]
		\matrix (m) [matrix of math nodes, column sep=1.5em, row sep = 1em, 		text height=1.5ex, text depth=0.25ex]
		{
			1 & \pi_1(S)  & \Gamma_\Theta & \Theta & 1 \\
		};
		\path[->,font=\scriptsize]
		(m-1-1) edge (m-1-2)
		(m-1-2) edge (m-1-3)
		(m-1-3) edge (m-1-4)
		(m-1-4) edge (m-1-5)
		;
		\end{tikzpicture}
	\]
be a $\delta$--hyperbolic surface group extension.
If $H$ is a finitely generated infinite--index subgroup of $\pi_1(S)$, then $H$ is quasiisometrically embedded in $\Gamma_\Theta$.
\end{ScottSwarupGeneralized}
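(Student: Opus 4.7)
The plan is to reduce to Proposition \ref{P:qi subsurfaces general} by recognizing $H$, after passing to a finite cover of $S$, as the $\pi_1$-stabilizer of a simplex in the relevant curve complex. Since $S$ is closed in the hypotheses of Proposition \ref{P:qi subsurfaces general}, the group $\pi_1(S)$ has no parabolics, so the finitely generated subgroup $H$ is convex cocompact as a Fuchsian group by Theorem \ref{T:Fuchsianccc}. Scott's separability theorem for surface groups then produces a finite-index subgroup $\pi_1(S') \leq \pi_1(S)$ containing $H$ such that $H = \pi_1(\Sigma')$ for some essential compact subsurface $\Sigma'$ of the finite cover $S' = \HH^2 / \pi_1(S')$. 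Placing the basepoint of $\mathring{S'}$ inside $\Sigma'$, the boundary multicurve of $\Sigma'$ determines a simplex $u' \subset \C(\mathring{S'})$ whose $\pi_1(S')$-stabilizer is exactly $H$.

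I would then promote the extension $\Gamma_\Theta$ to a $\delta'$-hyperbolic extension involving $\pi_1(S')$ rather than $\pi_1(S)$. Since $\pi_1(S)$ contains only finitely many subgroups of index $[\pi_1(S):\pi_1(S')]$, the outer action of $\Theta$ permutes them with finite orbits, and there is a finite-index subgroup $\Theta^\circ \leq \Theta$ stabilizing the conjugacy class of $\pi_1(S')$. After possibly shrinking further to kill the cover's deck group, $\Theta^\circ$ lifts to a subgroup of $\Mod(S')$. Taking $\Gamma^\circ$ to be the preimage of $\Theta^\circ$ in $\Gamma_\Theta$, one obtains a finite-index short exact sequence
	\[
		1 \to \pi_1(S') \to \Gamma^\circ \to \Theta^\circ \to 1.
	\]
This $\Gamma^\circ$ is $\delta'$-hyperbolic as a finite-index subgroup of $\Gamma_\Theta$, so Theorem \ref{T:ccciffhyperbolic} (applied in the cover) shows $\Theta^\circ$ is convex cocompact in $\Mod(S')$, and all hypotheses of Proposition \ref{P:qi subsurfaces general} are satisfied in this setting.

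Proposition \ref{P:qi subsurfaces general}, applied to $\Gamma^\circ$ with the simplex $u'$, then supplies a vertex $t_{u'}$ such that $\Hull_{t_{u'}}(u') \hookrightarrow \widetilde{X}^\circ$ is a quasiisometric embedding, where $\widetilde{X}^\circ$ is the total space built for $\Gamma^\circ$ via Diagram \eqref{E:lots of equivariance}. Since $H = \pi_1(S')_{u'}$ acts properly and cocompactly by isometries on $\Hull_{t_{u'}}(u')$ with its hyperbolic metric, the \v{S}varc--Milnor lemma (Theorem \ref{T:svarc-milnor}) identifies $H$, up to quasiisometry, with $\Hull_{t_{u'}}(u')$. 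Composing with the quasiisometry between $\widetilde{X}^\circ$ and $\Gamma^\circ$ (again by \v{S}varc--Milnor) and the quasiisometric embedding $\Gamma^\circ \hookrightarrow \Gamma_\Theta$ (a finite-index inclusion), one concludes that $H$ is quasiisometrically embedded in $\Gamma_\Theta$. The main technical obstacle lies in the middle paragraph: verifying that $\pi_1(S')$ is normal in $\Gamma^\circ$ and that $\Theta^\circ$ lifts to a convex cocompact subgroup of $\Mod(S')$. Both points are essentially bookkeeping, the latter resolved cleanly by appealing to Theorem \ref{T:ccciffhyperbolic} together with the fact that finite-index subgroups of $\delta$-hyperbolic groups remain $\delta'$-hyperbolic.
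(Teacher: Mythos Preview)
Your approach is essentially the paper's: invoke Scott's subgroup separability to realize $H$ as a subsurface group in a finite cover, arrange a finite-index sub-extension in which Proposition~\ref{P:qi subsurfaces general} applies, and conclude via \v{S}varc--Milnor. The only substantive difference is organizational: the paper keeps all of $\Theta$ and instead passes to the deeper cover $S''$ with $\pi_1(S'') = \bigcap_{\theta \in \Theta} \theta(\pi_1(S'))$, then works with $H' = H \cap \pi_1(S'')$; you instead keep $S'$ and shrink $\Theta$ to a finite-index $\Theta^\circ$. Either bookkeeping works.

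There is one slip to correct. Taking $\Gamma^\circ$ to be the preimage of $\Theta^\circ$ under $\Gamma_\Theta \to \Theta$ gives a group whose kernel is all of $\pi_1(S)$, not $\pi_1(S')$, so the displayed sequence $1 \to \pi_1(S') \to \Gamma^\circ \to \Theta^\circ \to 1$ is not what you wrote down. What you want is the finite-index subgroup of that preimage consisting of elements normalizing $\pi_1(S')$; equivalently, having lifted $\Theta^\circ$ to $\Mod(S')$, build $\Gamma^\circ$ as the corresponding $\pi_1(S')$-extension via the Birman sequence for $S'$ and then observe it sits as a finite-index subgroup of $\Gamma_\Theta$. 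Once this is said correctly, the rest of your argument goes through verbatim.
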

\begin{proof}
It suffices to show that a finite--index subgroup $H'$ of $H$ is quasiisometrically embedded.

By a theorem of Scott \cite{Scott.1978,Scott.1978.correction}, there is a finite cover $S'$ of $S$ in which the subgroup $H$ represents the fundamental group of a subsurface of $S'$.
Let $S''$ be the finite cover of $S$ such that 
	\[
		\pi_1(S'') = \bigcap_{\theta \in \Theta} \theta(\pi_1(S')).
	\]
Then $\Theta$ lifts to $\Mod(S'')$, and so there is a finite--index subgroup $\Gamma_\Theta'$ of $\Gamma_\Theta$ of the form
	\[
		\begin{tikzpicture}[>= to, line width = .075em, 
			baseline=(current bounding box.center)]
		\matrix (m) [matrix of math nodes, column sep=1.5em, row sep = 1em, 		text height=1.5ex, text depth=0.25ex]
		{
			1 & \pi_1(S'')  & \Gamma_\Theta' & \Theta & 1. \\
		};
		\path[->,font=\scriptsize]
		(m-1-1) edge (m-1-2)
		(m-1-2) edge (m-1-3)
		(m-1-3) edge (m-1-4)
		(m-1-4) edge (m-1-5)
		;
		\end{tikzpicture}
	\]
It follows immediately from Proposition \ref{P:qi subsurfaces general} that $H' = H \cap \pi_1(S'')$ is quasiisometrically embedded in $\Gamma_\Theta'$.
As the latter is finite--index in $\Gamma_\Theta$, it is quasiisometrically embedded there, and so $H'$ is quasiisometrically embedded in $\Gamma_\Theta$.
\end{proof}

\bibliographystyle{plain}
\bibliography{convex3manifolds}

\end{document}